\documentclass[12pt]{amsart}
\usepackage{amssymb}
\usepackage{amsfonts}
\usepackage{amssymb,latexsym}
\usepackage{enumerate}
\usepackage{mathrsfs}
\usepackage{url}
\allowdisplaybreaks

\makeatletter
\@namedef{subjclassname@2020}{%
	\textup{2020} Mathematics Subject Classification}
\makeatother

[2002/01/19 v2.2g %
AMS font definitions%
]
\DeclareFontFamily{U}{euf}{}
\DeclareFontShape{U}{euf}{m}{n}{%
	<5><6><7><8><9>gen*eufm%
	<10><10.95><12><14.4><17.28><20.74><24.88>eufm10%
}{}
\DeclareFontShape{U}{euf}{b}{n}{%
	<5><6><7><8><9>gen*eufb%
	<10><10.95><12><14.4><17.28><20.74><24.88>eufb10%
}{}

[2002/01/19 v2.2g %
AMS font definitions%
]
\DeclareFontFamily{U}{msb}{}
\DeclareFontShape{U}{msb}{m}{n}{%
	<5><6><7><8><9>gen*msbm%
	<10><10.95><12><14.4><17.28><20.74><24.88>msbm10%
}{}

[2002/01/19 v2.2g %
AMS font definitions%
]
\DeclareFontFamily{U}{msa}{}
\DeclareFontShape{U}{msa}{m}{n}{%
	<5><6><7><8><9>gen*msam%
	<10><10.95><12><14.4><17.28><20.74><24.88>msam10%
}{}

\newtheorem{theorem}{Theorem}[section]
\newtheorem{lemma}[theorem]{Lemma}
\newtheorem{proposition}[theorem]{Proposition}

\newtheorem{corollary}[theorem]{Corollary}

\theoremstyle{definition}
\newtheorem{remark}[theorem]{Remark}

\newtheorem{example}[theorem]{Example}

\usepackage{array}
 
\def\bar{\overline}
\def\ga{\gamma}

\numberwithin{equation}{section} 

\begin{document}

\def\t{\widetilde}
\def\tilde{\widetilde}
\def\ga{\gamma}
\def\d{{\rm d}}

\title[Alternating generalizations of Mizuno's product formula]
{Alternating generalizations of Mizuno's product formula and  the  modified gamma functions}

\author{Su Hu}
\address{Department of Mathematics, South China University of Technology, Guangzhou, Guangdong 510640, China}
\email{mahusu@scut.edu.cn}

\author{Min-Soo Kim}
\address{Department of Mathematics Education, Kyungnam University, Changwon, Gyeongnam 51767, Republic of Korea}
\email{mskim@kyungnam.ac.kr}


\subjclass[2010]{11M35, 33B15}
\keywords{Alternating Hurwitz zeta function, modified gamma function, zeta-regularized product, multiple gamma function, Shintani double sine function}

\begin{abstract}
Let $\tilde{\Gamma}(x)$ denote the modified gamma function recently introduced by the authors as an alternating analogue of the classical Euler gamma function. In this paper, we establish a complete alternating generalization of Mizuno's celebrated product formula:
\begin{equation*}
\prod_{m=0}^{\infty}\left(\prod_{j=1}^{n}(m+x_{j})^{(-1)^{m}}\right)
=\frac{\left(\sqrt{\frac{\pi}{2}}\right)^n}{\prod_{j=1}^{n}\tilde{\Gamma}(x_{j})}
=\prod_{j=1}^{n}\left(\prod_{m=0}^{\infty}(m+x_{j})^{(-1)^{m}}\right).
\end{equation*}
This identity, which we refer to as the alternating Mizuno formula, replaces the classical gamma function $\Gamma$ and the constant $\sqrt{2\pi}$ by their natural alternating counterparts $\tilde{\Gamma}$ and $\sqrt{\pi/2}$. As immediate consequences, we recover the alternating Lerch formula and, by specializing to $x=2$ and multiplying by $2$, a remarkably concise derivation of Wallis' famous product
\begin{equation*}
\frac{2\cdot2}{1\cdot3}\cdot\frac{4\cdot4}{3\cdot5}\cdot\frac{6\cdot6}{5\cdot7}\cdot\cdots=\frac{\pi}{2}.
\end{equation*}

More generally, by exploiting polynomial factorizations, we derive Kurokawa--Wakayama type formulas for alternating products over cyclotomic fields. Beyond these product identities, we investigate the multiple alternating gamma functions $\bar\Gamma_N(x)$, obtaining explicit factorizations in terms of Barnes' multiple gamma functions, closed-form evaluations involving the Glaisher--Kinkelin constant, and a Gauss--Legendre type multiplication formula. Finally, we introduce an alternating analogue of Shintani's double sine function and establish a clean arithmetic dichotomy: for algebraic $\tau$, its special values at integer points are algebraic precisely when $\tau$ is rational, and transcendental otherwise.
\end{abstract}

\maketitle

\def\ord{\text{ord}_p}
\def\ordt{\text{ord}_2}
\def\o{\omega}
\def\la{\langle}
\def\ra{\rangle}
\def\Log{{\rm Log}\, \Gamma_{p,N}^*}
\def\ov{\bar}

\section*{Notation}
\addcontentsline{toc}{section}{Notation}

For the reader's convenience, we list the main symbols below, grouped by theme. 
The numbers in parentheses indicate where each symbol is first defined.

\begin{description}

\item[Classical special functions and constants]
\begin{align*}
\zeta(s) &: \text{Riemann zeta function} && \eqref{zeta_def}\\
\zeta(s,x) &: \text{Hurwitz zeta function} && \eqref{Hurwitz}\\
\gamma_k(x) &: \text{generalized Stieltjes constants} && \eqref{Stieltjes constant}\\
\gamma &: \text{Euler--Mascheroni constant} && \eqref{Euler constant} \\
\Gamma(x) &: \text{Euler gamma function} && \eqref{Gamma}\\
\psi(x) &: \text{digamma function} && \eqref{Classical2}\\
\zeta_N(s,x,(\omega)) &: \text{Barnes multiple zeta function} && \eqref{Ba-m-zeta}\\
\Gamma_N^B(x,(\omega)) &: \text{Barnes multiple gamma (un-normalised)} && \eqref{Ba_gama}\\
\Gamma_N(x,(\omega)) &: \text{normalized Barnes multiple gamma} && \eqref{Ba-mul_gama}\\
A &: \text{Glaisher--Kinkelin constant} && \eqref{Adef} \\
E_{2k+1}(0) &: \text{odd Euler polynomials at }0 && \eqref{gama-stri} \\
B_n^{(\alpha)}(x) &: \text{generalized Bernoulli polynomials} && \eqref{h-ber-def}
\end{align*}

\item[Alternating counterparts]
\begin{align*}
\zeta_E(s,x) &: \text{alternating Hurwitz zeta function} && \eqref{AHurwitz}\\
\zeta_E(s) &: \text{alternating zeta (Dirichlet eta) function} && \eqref{Ezeta}\\
\tilde\gamma_k(x) &: \text{modified Stieltjes constants} && \eqref{l-s-con}\\
\tilde\gamma_0 &: \text{modified Euler constant} && \eqref{gamma0}\\
\widetilde{\Gamma}(x) &: \text{modified gamma function (associated with }\zeta_E\text{)} && \eqref{tgamma}\\
\tilde\psi(x) &: \text{modified digamma function} && \eqref{psi-Ga-def1}\\
\bar\Gamma_1(x) &: \text{normalized single-variable alternating gamma}  \\
&~\,\,\text{differs from }\widetilde{\Gamma}(x)\text{ by constant }\sqrt{\pi/2} &&\eqref{tgamma1}
\end{align*}

\item[Multiple alternating gamma functions]
\begin{align*}
\zeta_{E,N}(s,x,(\omega)) &: \text{multiple alternating Hurwitz zeta} && \eqref{Barnes-E}\\
\bar\Gamma_N(x,(\omega)) &: \text{normalized multiple alternating gamma} \\
&\text{(generalization of }\bar\Gamma_1\text{)} && \eqref{ru1-ome}\\
\mathcal C_N(x,(\omega)) &: \text{normalized multiple function} && \eqref{ru1-ome-nor}
\end{align*}
When the parameters $(\omega_1,\ldots,\omega_N)=(1,\ldots,1)$, \newline
we write $\zeta_{E,N}(s,x)$, $\bar\Gamma_N(x)$ and $\mathcal C_N(x)$ for brevity.

\vskip2mm
\item[Operators and auxiliary functions]
\begin{align*}
\prod_{\lambda\in\Lambda}\lambda &: \text{zeta-regularized product (Deninger)} && \text{\eqref{Deninger}}\\
\Delta_h &: \text{forward difference operator}  && \eqref{fo-dif-def} \\
\Lambda^*(s),\ \Lambda_c^*(s) &: \text{auxiliary zeta functions in Mizuno proof} && \eqref{eq:decomposition}
\end{align*}

\item[Frequently used parameters and sets]
\begin{align*}
(\omega_1,\ldots,\omega_N) &: \text{vector of parameters} && \text{\S 2.3}\\
\tau &: \text{algebraic (or complex) scaling parameter} && \text{Example~\ref{exam-2}}\\
x_j &\in \mathbb C\setminus\{0,-1,-2,\ldots\} && \text{Theorem~\ref{main}}\\
\zeta &: \text{primitive }n\text{th root of unity, } \zeta^n=1 && \text{Corollary~\ref{co-KW type}}\\
\zeta_j &: e^{(2j+1)\pi i/n}\ (j=0,\dots,n-1), \text{ for even }n && \text{Corollary~\ref{ne-root}}
\end{align*}

\item[Key constants and special values]
\begin{align*}
\sqrt{2\pi} &= \prod_{n=1}^{\infty} n \quad\text{(classical zeta-regularized)} && \eqref{Riemann}\\[2pt]
\sqrt{{\pi}/{2}} &=\prod_{n=1}^{\infty} n^{(-1)^n}= \bar\Gamma_1(1) = \exp(\zeta_E'(0)) && \text{Corollary~\ref{Le-type}}\\[2pt]
\zeta_E'(-1) &= -1/4-\log 2/3+3\log A && \text{Corollary~\ref{gam-di-(-1)}} \\[2pt]
\zeta_E'(0) &= \log\sqrt{{\pi}/{2}} && (\ref{4.4})
\end{align*}

\end{description}

\section{Introduction}

The study of infinite products and their regularization has a long and rich history, playing a pivotal role in number theory, complex analysis, and mathematical physics. A cornerstone of this theory is the zeta-regularized product, which assigns a finite value to divergent products via the analytic continuation of associated zeta functions. This framework provides a rigorous foundation for manipulating infinite products that arise naturally in various contexts, from determinants of differential operators to functional equations of $L$-functions.

A fundamental example is the product of all positive integers, which, when regularized, yields the celebrated identity $\infty! = \sqrt{2\pi}$, a direct consequence of the classical evaluation $\zeta'(0) = -\log(2\pi)/2$. This result was generalized by Lerch in 1894 to the product formula
\begin{equation*}
    \prod_{n=0}^\infty (x+n) = \frac{\sqrt{2\pi}}{\Gamma(x)},
\end{equation*}
which expresses the infinite product of an arithmetic progression in terms of the Euler gamma function $\Gamma(x)$. In a significant extension, Kurokawa and Wakayama \cite{KW} generalized Lerch's formula to any cyclotomic field, and subsequently, Mizuno \cite{Mizuno} established a remarkably general form:
\begin{equation}\label{eq:mizuno_classical}
    \prod_{m=0}^{\infty}\left(\prod_{j=1}^{n}(m+x_{j})\right)=\frac{(\sqrt{2\pi})^{n}}{\prod_{j=1}^{n}\Gamma(x_{j})}.
\end{equation}
This formula, which we refer to as Mizuno's formula, elegantly demonstrates the factorization of the zeta-regularized product of a product of sequences into a product of individual zeta-regularized products.

Parallel to the classical theory, there exists an alternating counterpart governed by the alternating Hurwitz zeta function,
$$
\zeta_E(s,x) = \sum_{m=0}^\infty \frac{(-1)^m}{(m+x)^{s}}.
$$
Unlike the classical Hurwitz zeta function $\zeta(s,x)$ (see \eqref{Hurwitz}), which has a simple pole at $s=1$, its alternating analogue $\zeta_E(s,x)$ enjoys the significant advantage of being holomorphic on the entire complex plane. In recent years, various properties of $\zeta_E(s,x)$ have been systematically investigated, including Fourier series expansions, power series expansions, asymptotic expansions, integral representations, special values, and convexity properties (see \cite{cvijovic2020note, hu2015special,  lambda2019hu, 24HK}). Beyond its purely analytic interest, $\zeta_E(s,x)$ also appears naturally in algebraic number theory, where it encodes certain partial zeta functions arising in Stark's conjectures for cyclotomic fields (see \cite[(6.13)]{kim2013some}). These features make $\zeta_E(s,x)$ a natural point of departure for an alternating analogue of the classical gamma function. Indeed, inspired by the classical theory, the authors recently introduced in \cite{22HK,26HK} a modified gamma function $\tilde{\Gamma}(x)$
(see \eqref{WH}), defined via the logarithmic derivative
$$
\frac{\d}{\d x}\log \tilde{\Gamma}(x) = -\zeta_E(1,x),
$$
which admits a Weierstrass--Hadamard product representation fully analogous to that of the Euler gamma function $\Gamma(x)$.

In this paper, we shall investigate the alternating analogues of the classical results mentioned above. Our main objective is to establish the following Mizuno-type product formula, which we state as Theorem~\ref{main}:

\begin{equation}\label{eq:main_result_intro}
\prod_{m=0}^{\infty}\left(\prod_{j=1}^{n}(m+x_{j})^{(-1)^{m}}\right)
=\frac{\left(\sqrt{\frac{\pi}{2}}\right)^n}{\prod_{j=1}^{n}\tilde{\Gamma}(x_{j})}.
\end{equation}

This result provides a complete alternating generalization of Mizuno's formula \eqref{eq:mizuno_classical}, with the classical gamma function 
$\Gamma(x)$ replaced by its modified counterpart $\tilde{\Gamma}(x)$ and the constant $\sqrt{2\pi}$ replaced by $\sqrt{\pi/2}$. 
As a direct consequence, by setting $n=1$, we recover the alternating version of Lerch's formula (see Corollary~\ref{Le-type}):
\begin{equation*}
    \prod_{m=0}^\infty (m+x)^{(-1)^m} = \frac{\sqrt{\frac{\pi}{2}}}{\tilde{\Gamma}(x)}.
\end{equation*}
Taking $x=2$ in Corollary~\ref{gam-psi-def} yields a compact derivation of Wallis' celebrated 1656 product (see Remark~\ref{will-f}):
\begin{equation*}
    \frac{2\cdot2}{1\cdot3}\cdot\frac{4\cdot4}{3\cdot5}\cdot\frac{6\cdot6}{5\cdot7}\cdot\cdots=\frac{\pi}{2}.
\end{equation*}

Our main formula \eqref{eq:main_result_intro} is remarkably versatile. By exploiting the factorization of polynomials such as $(m+x)^n - y^n$, we derive a Kurokawa--Wakayama type formula for alternating products (see Corollary~\ref{co-KW type}):
\begin{equation*}
    \prod_{m=0}^{\infty}\left((m+x)^{n}-y^{n}\right)^{(-1)^{m}}=\frac{\left(\sqrt{\frac{\pi}{2}}\right)^n}{\prod_{\zeta^{n}=1}\tilde{\Gamma}(x-\zeta y)}.
\end{equation*}
This encapsulates several special cases of independent interest, including analogues of Lerch's formula for Gaussian and other cyclotomic fields (see Corollary~\ref{Ler-typ2} and the subsequent corollaries).

Beyond these product formulas, we delve into the properties of a natural generalization: the multiple alternating gamma functions $\bar{\Gamma}_N(x)$ (see Proposition~\ref{gen-lerch2}), which are zeta-regularized products of the form
\begin{equation*}
    \bar{\Gamma}_N(x) = \prod_{n=0}^\infty (n+x)^{(-1)^{n+1}\binom{n+N-1}{N-1}}.
\end{equation*}
We derive an explicit factorization of these functions in terms of the classical Barnes multiple gamma functions (see Theorem~3.8), which allows us to compute several of their special values. Notably, we obtain closed-form expressions for two special constants in terms of the Glaisher--Kinkelin constant $A = 1.282427130\dots$: first, the alternating zeta derivative at $s=-1$ is evaluated as
\begin{equation*}
    \prod_{n=1}^{\infty} n^{(-1)^n n}
    = \exp(\zeta_E'(-1))
    = 2^{-\frac13} e^{-\frac14} A^3
\end{equation*}
(see Corollary~\ref{gam-di-(-1)}); second, we obtain the special value
\begin{equation*}
    \bar{\Gamma}_2(1) = \prod_{n=0}^{\infty}(n+1)^{(-1)^{n+1}(n+1)}
    = \frac{\Gamma_2\left(\frac{1}{2}\right)\Gamma_2\left(\frac{3}{2}\right)}
    {2^{\frac14}\Gamma_2(1)^2},
\end{equation*}
which satisfies
\begin{equation*}
    \Gamma_2\left(\frac{1}{2}\right)\Gamma_2\left(\frac{3}{2}\right) = 2^{-\frac{1}{12}} e^{-\frac14} A^3 \Gamma_2(1)^2
\end{equation*}
(see Theorem~\ref{gam2-prod}).

In addition, we establish a Stirling-type asymptotic expansion for the finite alternating products (see Theorem~\ref{St-form}):
\begin{equation}\label{eq:stirling_intro}
    \log \left(\prod_{n=1}^{N} n^{(-1)^n}\right)
    = \log \sqrt{\frac{\pi}{2}}+ \frac{(-1)^N}{2} \log N+ \frac{(-1)^N}{4N}+ O(N^{-2}),
\end{equation}
as $N\to\infty$. This asymptotic formula provides sharp control over the growth of the finite truncations and, in particular, is consistent with the zeta-regularized product
\begin{equation}
\prod_{n=1}^{\infty} n^{(-1)^n} = \sqrt{\frac{\pi}{2}}
\end{equation}
(see Corollary~\ref{Le-type}).

Furthermore, we introduce an analogue of Shintani's double sine function, $\mathcal C_2(x,(1,\tau))$, defined through an alternating zeta-regularized product (see \eqref{ru1-ome-nor} and Proposition~\ref{exam-5}). For this function, we investigate the algebraicity and transcendence of its special values at integer points, mirroring the classical results of Kurokawa and Wakayama \cite{KW2}. 

\medskip
\noindent
\textbf{Key arithmetic dichotomy (Theorem~\ref{thm-alg2}).} 
For an algebraic parameter $\tau$, the special values of the alternating double sine function at positive integers $m\ge 2$ satisfy the following clean dichotomy in the projective sense (see the convention above):
\[
\mathcal C_2(m,(1,\tau)) \in \mathbb{P}^1(\overline{\mathbb{Q}})
\quad \Longleftrightarrow \quad
\tau \in \mathbb{Q}.
\]
Equivalently, $\mathcal C_2(m,(1,\tau))$ is an algebraic point (possibly the point at infinity) precisely when $\tau$ is rational, and it is transcendental (finite and non-algebraic) whenever $\tau$ is an irrational algebraic number.

Additionally, we obtain explicit product representations for these values (see Proposition~\ref{thm-c-1}):
\[
\mathcal C_2(m,(1,\tau))= \sqrt{\tau}^{\,(-1)^m}\prod_{k=1}^{m-1}\left(\tan\left(\frac{k\pi}{2\tau}\right)\right)^{(-1)^{m+k-1}}.
\]
A dual formula (see Proposition~\ref{m-tau}) holds for the scaled argument:
\[
\mathcal C_2(m\tau,(1,\tau))= \sqrt{\tau}^{\,(-1)^{m-1}}\prod_{k=1}^{m-1}\left(\tan\left(\frac{k\pi\tau}{2}\right)\right)^{(-1)^{m+k-1}},
\]
where the roles of $\tau$ and $1$ in the tangent factors are naturally interchanged. We also establish a distribution relation under odd integer scalings (see Theorem~\ref{thm-c-2}).

Throughout this paper, we adopt the standard convention in the theory of zeta-regularized products and arithmetic geometry that all special values are regarded as points in the projective line 
\(\mathbb{P}^1(\overline{\mathbb{Q}})\). 
In particular, a pole (\(\infty\)) is treated as an algebraic point. 
This convention is consistent with the works of Kurokawa--Wakayama \cite{KW2} and is essential for the uniform statements of the algebraic/transcendence dichotomy (see Theorem~\ref{thm-alg2} and Remark~\ref{rem-proj}). 

The structure of this paper is as follows. In Section~\ref{sec:prelim}, we review the necessary preliminaries on zeta-regularized products, 
the Hurwitz and alternating Hurwitz zeta functions, and the classical and modified gamma functions, including their multiple analogues. Section~\ref{sec:main} is dedicated to the statement of our main results, which include the generalized Mizuno-type formula 
(see Theorem~\ref{main}), the Kurokawa--Wakayama type formula (see Corollary~\ref{co-KW type}), the explicit special values of multiple 
alternating gamma functions, including $\bar\Gamma_2(1)$ (see Theorem~\ref{gam2-prod} and Corollary~\ref{gam-di-(-1)}), 
the Stirling-type asymptotic expansion for finite alternating products 
(see Theorem~\ref{St-form}), the multiplication formula (see Theorem~\ref{thm-g-Ga-Le}), and the algebraicity and transcendence results for the double sine-type function (see Proposition~\ref{thm-c-1} and Theorem~\ref{thm-alg2}). Finally, all proofs are collected in Section~\ref{sec:proofs}.

\section{Preliminaries}\label{sec:prelim}

Before proceeding to our main results, we first assemble the essential analytic machinery 
that will underpin our subsequent developments. Central to our approach is the notion of 
zeta-regularized products (see Section~\ref{subsec:regprod}), a powerful regularization 
technique that tames divergent infinite products by analytic continuation, thereby converting 
formal infinities into meaningful finite quantities. This framework naturally interfaces with 
the Hurwitz zeta function $\zeta(s,x)$ and the classical gamma function $\Gamma(x)$ 
(see Section~\ref{subsec:Hurwitz}), whose rich analytic structure serves as a springboard 
for higher-order generalizations. We shall also recall Barnes' multiple gamma functions 
$\Gamma_N^B(x,(\omega_1,\ldots,\omega_N))$ (see Section~\ref{subsec:Barnes}), which arise 
as the natural higher-dimensional extensions of Euler's gamma function. Finally, we turn 
to their alternating counterparts: the alternating Hurwitz zeta function $\zeta_{E}(s,x)$ 
and the corresponding modified gamma functions $\tilde\Gamma(x)$ (see 
Section~\ref{subsec:altHurwitz}), and their multiple analogues 
$\zeta_{E,N}(s,x,(\omega_1,\ldots,\omega_N))$ and 
$\bar\Gamma_{N}(x,(\omega_1,\ldots,\omega_N))$ (see Section~\ref{subsec:multAlt}), 
which provide the necessary bridge between the classical theory and our new alternating 
product formulas. Throughout this section, we omit routine proofs that are standard in 
the literature, directing the interested reader to the cited references for full details.

\subsection{Zeta-regularized products}\label{subsec:regprod}

The notion of zeta-regularized products has its origins both in number theory and in 
mathematical physics, where it provides a rigorous way to define determinants of elliptic 
operators and to regularize path integrals in curved spacetime (see, e.g., Hawking's work 
\cite{Hawking1977}).

To illustrate the central idea, we begin with the most fundamental example: the 
zeta-regularized product of all positive integers. In the book \textit{Number Theory 3: 
Iwasawa Theory and Modular Forms} \cite[p.~47, \S9.5]{KKS}, the authors stated the 
following infinite product obtained via zeta regularization:
\begin{equation}\label{Riemann}
	\infty ! := \prod_{n=1}^\infty n = \sqrt{2\pi}
\end{equation}
(see \cite[Corollary 9.13]{KKS} and \cite[p.~946, Remark~(a)]{KW}).
Given a finite sequence $\textbf{a}=(a_{1},\ldots, a_{N}),$ define
$$\zeta_{\textbf{a}}(s)=\sum_{n=1}^{N}a_{n}^{-s}.$$
Then
$$\zeta_{\textbf{a}}'(0)=-\sum_{n=1}^{N}\log (a_{n})=-\log\left(\prod_{n=1}^{N}a_n\right),$$
so that $\exp(-\zeta_{\textbf{a}}'(0))=\prod_{n=1}^{N}a_{n}$.

Thus, for an infinite sequence $\textbf{a}=(a_{1},\ldots, a_{N},\ldots),$ if the 
corresponding zeta function
$$\zeta_{\textbf{a}}(s)=\sum_{n=1}^{\infty}a_{n}^{-s}$$
can be analytically continued to a neighborhood around $s=0$, then we may define its 
\textit{normalized product} (or \textit{zeta-regularized product}) by
\begin{equation}\label{normal}
	\prod_{n=1}^{\infty}a_{n}=\exp(-\zeta_{\textbf{a}}'(0)).
\end{equation}
This notation will be used repeatedly in what follows to define infinite products of 
arithmetic progressions and their alternating analogues.

For example, let $\mathbf{a} = (1, 2, \ldots)$. Then the corresponding zeta function 
is precisely the Riemann zeta function,
\begin{equation}\label{zeta_def}
	\zeta(s) = \sum_{n=1}^{\infty} \frac{1}{n^{s}},
\end{equation}
with Re$(s)>1,$ for which Riemann \cite{Riemann} proved in 1859 that
\begin{equation}\label{Riemann2}
	\zeta'(0) = -\frac{1}{2} \log(2\pi).
\end{equation}
By (\ref{normal}), we understand that
\begin{equation}
	\prod_{n=1}^{\infty}n=``1\times2\times3\times4\times\cdots"=\exp(-\zeta'(0))=\sqrt{2\pi}.
\end{equation}
(This result can be interpreted as ``$\infty!=\sqrt{2\pi}$.'') In 1894, Lerch generalized 
(\ref{Riemann}) as the following product:
\begin{equation}\label{Lerch}
	\prod_{n=0}^\infty(x+n)=\frac{\sqrt{2\pi}}{\Gamma(x)},
\end{equation}
where Re$(x)>0$ and $\Gamma(x)$ is the Euler gamma function (see \cite[p. 50, Theorem 9.12]{KKS}).

Lerch himself extended (\ref{Lerch}) to the Gaussian quadratic field $\mathbb{Q}(i)$ as follows:
\begin{equation}\label{Lerch2}
\prod_{m=0}^\infty \left((m+x)^2+y^2\right)=\frac{2\pi}{\Gamma(x+iy)\Gamma(x-iy)}.
\end{equation}
Then in 2004, Kurokawa and Wakayama \cite{KW} proved the following generalization of 
(\ref{Lerch2}) to any cyclotomic field $\mathbb{Q}(\zeta)$, where $\zeta$ denotes the 
$n$th roots of unity:
\begin{equation}\label{KW}
\prod_{m=0}^{\infty}\left((m+x)^{n}-y^n\right)=\frac{(\sqrt{2\pi})^{n}}{\prod_{\zeta^{n}=1}\Gamma(x-\zeta y)},
\end{equation}
and in 2006, by applying Stark's summation formula \cite{Stark}, Mizuno \cite{Mizuno} 
obtained the general form
\begin{equation}\label{Mizuno}
\prod_{m=0}^{\infty}\left(\prod_{j=1}^{n}(m+x_{j})\right)=\frac{(\sqrt{2\pi})^{n}}{\prod_{j=1}^{n}\Gamma(x_{j})}
=\prod_{j=1}^{n}\left(\prod_{m=0}^{\infty}(m+x_{j})\right)
\end{equation}
for $x_{j}\in\mathbb{C}\setminus\{0,-1,-2, \ldots\}.$ This implies the special case 
(\ref{Lerch}) when we take $n=1.$ See \cite{KKS} for an elementary proof of this special case.

Beyond these classical examples, zeta-regularized products of integers selected by various 
combinatorial criteria have also attracted considerable interest. In particular, Allouche 
\cite{Allouche2021} recently computed the zeta-regularized product of odious numbers, 
positive integers whose binary expansion contains an odd number of 1's, showing that
\begin{equation}
    \prod_{n\in\mathcal O} n = (2\pi)^{\frac{1}{4}}Q^{-\frac{1}{2}},
\end{equation}
where $Q = \prod_{n\ge 1}(2n/(2n+1))^{\epsilon_n}$ and $\epsilon_n$ is the Thue--Morse 
sequence. This result, whose proof relies crucially on the analytic continuation of the 
Dirichlet series \(\sum_{n\ge 1}\epsilon_n n^{-s}\), provides an interesting parallel to 
the alternating products studied in the present paper. For a broader survey of 
zeta-regularized products, see \cite{Allouche2020}.

\subsection{The Hurwitz zeta function and the classical gamma function}\label{subsec:Hurwitz}

Having introduced the regularization technique itself, we now turn to the classical zeta 
and gamma functions that lie at the heart of the theory.

Dating back to the work of Hurwitz in 1882, the partial zeta function
\begin{equation}\label{Hurwitz}
    \zeta(s,x)=\sum_{m=0}^\infty \frac{1}{(m+x)^s},
\end{equation}
originally defined for Re$(s)>1$ and $x\neq 0,-1,-2,\ldots$, has become a cornerstone 
of analytic number theory and special functions. Like its special case the Riemann zeta 
function $\zeta(s)$, it admits a meromorphic continuation to the whole complex plane, 
with the only singularity being a simple pole at $s=1$.

For ${\rm Re}(s)>1,$ setting $x=1$ in (\ref{Hurwitz}) reduces it to the Riemann zeta 
function $\zeta(s).$ The generalized Stieltjes constant $\gamma_{k}(x)$ comes from the 
Laurent expansion of $\zeta(s,x)$ around $s=1$:
\begin{equation}\label{Stieltjes constant}
\zeta(s,x)=\frac{1}{s-1}+\sum_{k=0}^{\infty}\frac{(-1)^{k}\gamma_{k}(x)}{k!}(s-1)^{k},
\end{equation}
and $\gamma_{k}=\gamma_{k}(1)$ is the original Stieltjes constant in 1885 (see \cite{St}).
Letting $k=0$ and $x=1$ in (\ref{Stieltjes constant}), the Euler--Mascheroni constant 
$\gamma$ is defined to be
\begin{equation}\label{Euler constant}
\begin{aligned}
\gamma&=\gamma_{0}(1)=\lim_{s\to1}\left(\zeta(s)-\frac{1}{s-1}\right)=\lim_{\alpha\to\infty}\left(\sum_{n=1}^{\alpha}\frac{1}{n}-\log \alpha\right) \\
&=0.5772156649\cdots.
\end{aligned}
\end{equation}

The gamma function $\Gamma(x)$ is defined by Euler from its integral representation
\begin{equation}\label{Gamma} 
\Gamma(x)=\int_{0}^{\infty}t^{x-1}e^{-t}{\rm d}t,
\end{equation}
with Re$(x)>0,$ but it can also be defined from the derivatives of $\zeta(s,x)$ as
\begin{equation}
\Gamma(x)=\exp\left(\zeta'(0,x)-\zeta'(0,1)\right)=\exp\left(\zeta'(0,x)-\zeta'(0)\right),
\end{equation}
where the differentiation is with respect to the first variable $s$
(see, e.g., \cite[Definition 9.6.13(1)]{Cohen}).
The following Weierstrass--Hadamard product representation of $\Gamma(x)$ is well-known:
\begin{equation}\label{Hadamard}
\Gamma(x)=\frac{1}{x}e^{-\gamma x} \prod_{m=1}^{\infty}\left(e^{\frac{x}{m}}\left(1+\frac{x}{m}\right)^{-1}\right),
\end{equation}
where $\gamma$ is Euler's constant.

Then the digamma function can be defined from the derivative of $\log\Gamma(x)$:
 \begin{equation}\label{Classical2}
 \psi(x)=\frac{\d}{\d x}\log\Gamma(x),
 \end{equation}
and more generally
 \begin{equation}\label{Classical3}
 \psi^{(n)}(x)=\left(\frac{\d}{\d x}\right)^n\psi(x)
 \end{equation}
for $n=0,1,2,\ldots,$ and we see that
\begin{equation}\label{Classical} 
\psi^{(n)}(x)=(-1)^{n+1}n!\zeta(n+1,x)
\end{equation}
for $n = 1,2,\ldots$ (also see \cite[Proposition 9.6.41]{Cohen}).

Put
\begin{equation}\label{gam-1} 
\Gamma_{1}(x)=\exp\left(\frac{\partial}{\partial s}\zeta(s,x)\biggl|_{s=0}\right).
\end{equation}
The following relation between $\Gamma_{1}(x)$ and the Euler gamma function $\Gamma(x)$ 
will be proved in Section \ref{sec:proofs}.
\begin{proposition}[{\cite[Lemma 2.1]{Var}}]\label{proposition1}
$$
\Gamma_1(x)=\frac{\Gamma(x)}{\sqrt{2\pi}}.
$$
\end{proposition}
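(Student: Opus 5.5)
Since the paper already records $\Gamma(x)=\exp\left(\zeta'(0,x)-\zeta'(0)\right)$, the statement reduces to identifying the constant: $\Gamma_1(x)=\Gamma(x)\exp(\zeta'(0))$. Combined with Riemann's evaluation \eqref{Riemann2}, $\zeta'(0)=-\frac12\log(2\pi)$, this gives $\Gamma_1(x)=\Gamma(x)/\sqrt{2\pi}$. To keep the argument self-contained, and not rely on a formula that is essentially equivalent to the claim, I would instead prove Lerch's identity $\zeta'(0,x)=\log\Gamma(x)-\frac12\log(2\pi)$ directly, as follows.

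\emph{Step 1 (same second derivative).} For $\mathrm{Re}(s)>1$ we have $\partial_x\zeta(s,x)=-s\,\zeta(s+1,x)$. By analytic continuation this holds for all $s$, and applying it twice gives
\[
\partial_x^2\zeta(s,x)=s(s+1)\zeta(s+2,x).
\]
Differentiating in $s$ at $s=0$ yields $\partial_x^2\zeta'(0,x)=\zeta(2,x)$. By \eqref{Classical}, $\zeta(2,x)=\psi'(x)=\partial_x^2\log\Gamma(x)$. Hence $\zeta'(0,x)-\log\Gamma(x)=ax+b$ for some constants $a,b$.

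\emph{Step 2 (fix the linear term).} Use the functional equation $\zeta(s,x)-\zeta(s,x+1)=x^{-s}$. Differentiating in $s$ at $s=0$ gives $\zeta'(0,x+1)-\zeta'(0,x)=\log x$. Since also $\log\Gamma(x+1)-\log\Gamma(x)=\log x$, the difference $ax+b$ is $1$-periodic, so $a=0$.

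\emph{Step 3 (the constant).} Set $x=1$: then $b=\zeta'(0,1)-\log\Gamma(1)=\zeta'(0)$, and \eqref{Riemann2} gives $b=-\frac12\log 2\pi$. Exponentiating, $\Gamma_1(x)=\Gamma(x)e^{b}=\Gamma(x)/\sqrt{2\pi}$.

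The main obstacle is justifying that $s$- and $x$-differentiation commute across the analytic continuation in Step 1. I would handle this through joint holomorphy of $\zeta(s,x)$ in $(s,x)$, for instance via the Hermite integral or the Euler--Maclaurin continuation, which converges locally uniformly in both variables. If \eqref{Riemann2} is not regarded as known, it would follow from the same continuation by computing $\zeta'(0)$ with Stirling's formula.
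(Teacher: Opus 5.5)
Your argument is correct. It shares its last two steps with the paper's proof but obtains uniqueness differently.

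\textbf{The paper's route.} The paper derives $\Gamma_1(x+1)=x\Gamma_1(x)$ from $\zeta'(0,x+1)=\zeta'(0,x)+\log x$. It then invokes the Bohr--Mollerup theorem to get $\Gamma_1=R\,\Gamma$, and evaluates $R=e^{\zeta'(0)}=1/\sqrt{2\pi}$ at $x=1$. Your Steps 2 and 3 are exactly these ingredients.

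\textbf{Your route.} Instead of appealing to a uniqueness theorem, you compute $\partial_x^2\zeta'(0,x)=\zeta(2,x)=\psi'(x)$. Hence $\zeta'(0,x)-\log\Gamma(x)$ is affine, and the same recurrence forces its slope to vanish.

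\textbf{What this buys you.} The functional equation alone fixes $\Gamma_1/\Gamma$ only up to a $1$-periodic factor. Bohr--Mollerup also needs log-convexity of $\Gamma_1$ on $(0,\infty)$, and the paper's proof only checks the functional equation. Your identity $\partial_x^2\log\Gamma_1(x)=\zeta(2,x)$, with $\zeta(2,x)>0$ for $x>0$, supplies that missing hypothesis and in fact more. Since it is an identity between holomorphic functions, it also gives the result directly for complex $x$, not just on the positive reals.

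\textbf{The cost.} You must justify interchanging $\partial_s$ and $\partial_x$ across the analytic continuation. As you say, this follows from joint holomorphy of $\zeta(s,x)$. The paper's argument is shorter, but it relies on a theorem whose hypotheses it does not fully verify.

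You were also right not to rely on the formula $\Gamma(x)=\exp(\zeta'(0,x)-\zeta'(0))$ recorded in the paper, since it is essentially the statement being proved.
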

These classical functions serve as prototypes for the multiple gamma functions introduced 
in the next subsection.

\subsection{Barnes multiple gamma functions}\label{subsec:Barnes}

The Barnes multiple zeta and gamma functions are natural higher-dimensional generalizations 
of the classical Hurwitz zeta and Euler gamma functions, and they have found deep applications 
ranging from analytic number theory and special functions to spectral geometry and algebraic 
number theory. The theory was initiated by Barnes in 1904 \cite{Ba}.

He first defined the multiple zeta function
\begin{equation}\label{Ba-m-zeta}
    \zeta_{N}(s,x,(\omega_1,\ldots,\omega_N))=\sum_{m_1,\ldots,m_N=0}^\infty
    \frac{1}{(m_1\omega_1+\cdots+m_N\omega_N+x)^s}
\end{equation}
for Re$(s)>N$. He showed that it admits a meromorphic continuation to the entire complex 
plane with simple poles at $s=1,2,\ldots,N$. This continuation, established by Barnes 
using a generalized Stirling asymptotic expansion, plays a crucial role in the theory of 
multiple gamma functions and their functional equations.

The normalized multiple gamma function is then defined by
\begin{equation}\label{rho}
\Gamma_N(x,(\omega_1,\ldots,\omega_N))
= \exp\left(\frac{\partial}{\partial s} \zeta_N(s,x,(\omega_1,\ldots,\omega_N))\bigg|_{s=0}\right).
\end{equation}
Barnes then introduced the modular constant $\rho_N(\omega_1,\ldots,\omega_N)$ 
(also called the multiple Stirling modular form), which plays a role analogous to 
$\sqrt{2\pi}$ in Proposition~\ref{proposition1}, by the normalization condition
\[
x\,\Gamma_N^B(x,(\omega_1,\ldots,\omega_N)) \to 1 \qquad (x\to 0),
\]
or equivalently,
\[
\rho_N(\omega_1,\ldots,\omega_N)
= \left( \operatorname*{Res}_{x=0} \Gamma_N(x,(\omega_1,\ldots,\omega_N)) \right)^{-1}.
\]
The associated (un-normalized) Barnes multiple gamma function 
$\Gamma_N^B(x,(\omega_1,\ldots,\omega_N))$ is then defined by
\begin{equation}\label{Ba_gama} 
\frac{\partial}{\partial s}\zeta_N(s,x,(\omega_1,\ldots,\omega_N))\biggl|_{s=0}
=\log\left(\frac{\Gamma_N^B(x,(\omega_1,\ldots,\omega_N))}{\rho_N(\omega_1,\ldots,\omega_N)}\right),
\end{equation}
or equivalently,
\begin{equation}\label{Ba-mul_gama} 
\Gamma_N(x,(\omega_1,\ldots,\omega_N))=\frac{\Gamma_N^B(x,(\omega_1,\ldots,\omega_N))}{\rho_N(\omega_1,\ldots,\omega_N)}.
\end{equation}
For $N=1$, this reduces to $\rho_1(\omega)=\sqrt{2\pi/\omega}$; for $N=2$, it is the 
double Stirling modular form $\rho_2(\omega_1,\omega_2)$ defined in \cite[§43]{Ba1901}.
For further details, the reader is referred to \cite[p. 130, Proposition 1.4]{Choi3}, 
\cite[p. 840, Theorem 1.1]{KK}, \cite[p. 119, (3.19)]{Ru}, and 
\cite[p. 498, Proposition 2.3]{Var}.

An interesting property of the multiple zeta function is its Laurent expansion at the 
poles. While the classical Hurwitz zeta function $\zeta(s,x)$ has the familiar expansion
\begin{equation*}
    \zeta(s,x)=\frac{1}{s-1}+\sum_{k=0}^{\infty}\frac{(-1)^k\gamma_k(x)}{k!}(s-1)^k,
\end{equation*}
where $\gamma_k(x)$ are the generalized Stieltjes constants, the multiple zeta function 
$\zeta_N(s,x,(\omega_1,\ldots,\omega_N))$ admits a more elaborate Laurent expansion at 
$s=N$ involving generalized Stieltjes-type constants of higher order. Coffey 
\cite{Coffey2008,Coffey2014} has developed accelerated series representations for the 
classical Hurwitz zeta function; extending such representations to the multiple setting 
remains an active area of research.

The uniqueness of the multiple gamma function was established by Vignéras \cite{Vigneras}, 
who proved a higher-dimensional analogue of the Bohr--Mollerup theorem and derived the 
fundamental recurrence relation
\begin{equation*}
    \Gamma_N(x+1,(\omega_1,\ldots,\omega_N))
    =\Gamma_N(x,(\omega_1,\ldots,\omega_N))^{-1}
    \Gamma_{N-1}(x,(\omega_1,\ldots,\omega_{N-1})).
\end{equation*}
This recurrence, together with the asymptotic expansion of Barnes, completely characterizes 
the multiple gamma function. In the special case $N=1$ and $\omega_1=1$, these definitions 
reduce to the classical gamma function $\Gamma(x)$ (up to the constant $\sqrt{2\pi}$), 
as shown in Proposition~\ref{proposition1}.

The multiple gamma function also admits a Weierstrass--Hadamard product representation, 
generalizing the classical formula
\begin{equation*}
    \frac{1}{x\Gamma(x)}=e^{\gamma x}\prod_{n=1}^{\infty}\left(1+\frac{x}{n}\right)e^{-\frac{x}{n}}.
\end{equation*}
Product representations for $\Gamma_N$ were independently established by Onodera 
\cite{Onodera2009} and Choi--Srivastava \cite{ChoiSrivastava2013}. These representations 
provide the basis for deriving Stirling-type asymptotic expansions and for evaluating 
special values like $\Gamma_N(1/2,(\omega_1,\ldots,\omega_N))$, which carry independent 
significance in spectral geometry and algebraic number theory. For instance, Vardi 
\cite{Var} showed that the functional determinant of the Laplacian on the $n$-dimensional 
sphere can be expressed in terms of Barnes' multiple gamma function at $1/2$. More 
recently, Barquero--Sánchez, Masri, and Tsai \cite{BarqueroSanchez2021} have used special 
values of multiple gamma functions to construct explicit Stark units in abelian extensions 
of totally real fields. Additionally, $p$-adic analogues of the multiple zeta and gamma 
functions have been developed by Tangedal and Young 
\cite{TangedalYoung2011,TangedalYoung2013}.

The double zeta function $\zeta_{2}(s,x,(\omega_1,\omega_2))$ with Re$(\omega_1)>0,$ 
Re$(\omega_2)>0$ was successfully used by Shintani (see \cite{Sh77,Sh80}) to derive a 
Kronecker limit formula for real quadratic fields.

Having reviewed the classical multiple gamma functions, we now turn to their alternating 
counterparts.

\subsection{Alternating Hurwitz zeta function and modified gamma functions}\label{subsec:altHurwitz}

The alternating Hurwitz zeta function, an ``alternating'' counterpart of the 
classical Hurwitz zeta function, is of interest because of its analytic properties.

Unlike $\zeta(s,x)$, which has a simple pole at $s=1$, its alternating analogue 
$\zeta_E(s,x)$ is holomorphic on the entire complex plane, which already hints at its 
richer analytic structure. Defined for $\operatorname{Re}(s)>0$ by
\begin{equation}\label{AHurwitz}
\zeta_E(s,x)=\sum_{m=0}^\infty\frac{(-1)^m}{(m+x)^s},
\end{equation}
this function admits an analytic continuation to the whole complex plane without any 
singularities. In recent years, a wide range of its properties have been systematically 
explored, including Fourier and power series expansions, asymptotic expansions, integral 
representations, special values, and convexity properties (see 
\cite{cvijovic2020note, hu2015special, lambda2019hu, 24HK}). More recently, complete 
monotonicity, log-convexity, and Turán type inequalities for the alternating Hurwitz 
zeta function have been established, together with applications to Kolmogorov--Landau 
type inequalities (see \cite{Yang2026}).

Moreover, $\zeta_E(s,x)$ also appears in algebraic number theory: it represents 
certain partial zeta functions in Stark's conjectures for cyclotomic fields (see 
\cite[(6.13)]{kim2013some}), and its $p$-adic analogues are intimately related to class 
number formulas and Iwasawa theory (see \cite{Hu-Kim2016}). Very recently, a unified 
framework of alternating invariant functions has been developed, with $\zeta_E(s,x)$ 
and its associated gamma functions serving as prototypical examples (see \cite{ZhuHuKim2026}).

It is precisely the alternating nature of $\zeta_E(s,x)$ that makes it a natural point 
of departure for constructing an alternating analogue of the classical gamma function. 
We recently introduced in \cite{22HK,26HK} a modified gamma function $\tilde{\Gamma}(x)$, 
which admits an infinite product representation fully analogous to that of the classical 
Euler gamma function $\Gamma(x)$.

Setting $x=1$ in (\ref{AHurwitz}), it reduces to the alternating zeta function (or, 
equivalently, the Dirichlet eta function),
\begin{equation}~\label{Ezeta}
\zeta_E(s)=\sum_{m=1}^{\infty}\frac{(-1)^{m+1}}{m^{s}},
\end{equation}
with $\operatorname{Re}(s)>0$. According to Weil's history~\cite[p.~273--276]{Weil} 
(also see a survey by Goss~\cite[Section 2]{Goss}), Euler used (\ref{Ezeta}) to ``prove''
\begin{equation}~\label{fe}
\frac{\zeta_E(1-s)}{\zeta_E(s)}=-\frac{\Gamma(s)(2^{s}-1)\cos(\pi s/2)}{(2^{s-1}-1)\pi^{s}},
\end{equation}
which leads to the functional equation of the Riemann zeta function $\zeta(s)$.

As a result of analytic continuation, we see that $\zeta_E(s,x)$ is non-singular at $s=1$. 
Thus we can designate a modified Stieltjes constant $\tilde\gamma_k(x)$ from the Taylor 
expansion of $\zeta_E(s,x)$ at $s=1$,
\begin{equation}\label{l-s-con}
\zeta_E(s,x)=\sum_{k=0}^\infty\frac{(-1)^k\tilde\gamma_k(x)}{k!}(s-1)^k.
\end{equation}
In analogy with the classical case (\ref{Stieltjes constant}), 
$\tilde\gamma_{k}=\tilde\gamma_{k}(1)$ is named the modified Stieltjes constant. From 
this expansion it follows immediately that
\begin{equation*}
\tilde\gamma_0(x)=\zeta_E(1,x),
\end{equation*}
and in particular
\begin{equation}\label{gamma0}
\tilde\gamma_0=\tilde\gamma_0(1)=\zeta_E(1)=\sum_{n=1}^{\infty}\frac{(-1)^{n+1}}{n}=\log 2
\end{equation}
(see \cite[Eqs. (1.17) and (1.20)]{22HK}). Then we defined the modified digamma function by
\begin{equation}\label{psi-Ga-def1}
\tilde\psi(x):=-\tilde\gamma_0(x)=-\zeta_E(1,x).
\end{equation}

Inspired by the classical formula (\ref{Classical2}), the modified gamma function 
$\tilde\Gamma(x)$ is defined to be the unique solution of the differential equation
\begin{equation}\label{tgamma}
\tilde\psi(x)=\frac{\rm d}{{\rm d}x}\log\tilde\Gamma(x)
\end{equation}
with the normalization $\tilde\Gamma(1)=\pi/2$. It can be shown (see 
\cite[Theorem 3.12]{22HK}) that $\tilde\Gamma(x)$ admits the following Weierstrass--Hadamard 
product representation
\begin{equation}\label{WH}
\tilde\Gamma(x)=\frac1x e^{\tilde\gamma_0 x}\prod_{m=1}^\infty\left(e^{-\frac xm}\left(1+\frac xm\right)\right)^{(-1)^{m+1}}.
\end{equation}

Inspired by the classical formula (\ref{Classical3}), we further defined (see 
\cite[(1.24)]{22HK})
\begin{equation}
\t \psi^{(n)}(x)=\left(\frac{\d}{\d x}\right)^n\t\psi(x)
\end{equation}
for $n=0,1,2,\ldots$ and obtained the following representation (see \cite[(1.25)]{22HK})
\begin{equation}\label{Classical-E} 
\t\psi^{(n)}(x)=(-1)^{n+1}n!\zeta_E(n+1,x)
\end{equation}
for $n = 1,2,\ldots$ (comparing with (\ref{Classical})).

Furthermore, using the derivative of the alternating Hurwitz zeta function at $s=0$, 
we have (see \cite[Theorem 9]{26HK})
\begin{equation}\label{t-def}
\t\Gamma(x)=\exp\left(\zeta_E'(0,x)+\zeta'_E(0,1)\right)=\bar\Gamma_1(x)\exp\left(\zeta_E'(0)\right),
\end{equation}
where 
\begin{equation}\label{tgamma1}
\bar\Gamma_1(x)=\exp\left(\frac{\partial}{\partial s}\zeta_{E}(s,x)\biggl|_{s=0}\right).
\end{equation}
The following result establishes the relationship between $\t\Gamma(x)$ and 
$\bar\Gamma_1(x)$.
\begin{proposition}[{\cite[(2.22)]{26HK}}]\label{pro-t-a}
\begin{equation}\label{tgamma2}
\bar\Gamma_1(x)=\frac{\t\Gamma(x)}{\sqrt{\frac{\pi}{2}}}.
\end{equation}
\end{proposition}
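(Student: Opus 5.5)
The plan is to derive the identity from the defining differential equation \eqref{tgamma}, so that everything reduces to evaluating one constant, $\zeta_E'(0)$. This avoids relying on \eqref{t-def} as a black box.

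\textbf{Step 1: both sides have the same logarithmic derivative.} For $\operatorname{Re}(s)>0$, differentiating \eqref{AHurwitz} term by term in $x$ gives
\[
\frac{\partial}{\partial x}\zeta_E(s,x)=-s\,\zeta_E(s+1,x).
\]
Since $\zeta_E(s,x)$ is entire in $s$, this identity persists for all $s$ by analytic continuation. Differentiating it in $s$ and setting $s=0$ gives
\[
\frac{\partial}{\partial x}\zeta_E'(0,x)=-\zeta_E(1,x).
\]
The mixed partial derivatives may be interchanged because the function is jointly holomorphic. By \eqref{tgamma1}, the left-hand side is $\frac{\d}{\d x}\log\bar\Gamma_1(x)$. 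By \eqref{psi-Ga-def1} and \eqref{tgamma}, the right-hand side is $\tilde\psi(x)=\frac{\d}{\d x}\log\tilde\Gamma(x)$. Hence $\bar\Gamma_1(x)=C\,\tilde\Gamma(x)$ for some constant $C$, on a connected domain of $x$ avoiding the non-positive integers. This is the same uniqueness argument that underlies \eqref{t-def}.

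\textbf{Step 2: fix the constant at $x=1$.} Since $\tilde\Gamma(1)=\pi/2$, we get $C=\bar\Gamma_1(1)\cdot\frac{2}{\pi}=\exp(\zeta_E'(0))\cdot\frac{2}{\pi}$. The claim $\bar\Gamma_1(x)=\tilde\Gamma(x)/\sqrt{\pi/2}$ is then equivalent to
\[
\exp(\zeta_E'(0))=\sqrt{\pi/2}.
\]
This is also exactly what \eqref{t-def} needs to yield the proposition directly.

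\textbf{Step 3: compute $\zeta_E'(0)$, the main step.} Splitting \eqref{Ezeta} into even and odd terms gives the standard relation
\[
\zeta_E(s)=(1-2^{1-s})\zeta(s),
\]
valid for $\operatorname{Re}(s)>1$ and then everywhere by continuation. Differentiating gives
\[
\zeta_E'(s)=2^{1-s}\log 2\,\zeta(s)+(1-2^{1-s})\zeta'(s).
\]
At $s=0$ we use $\zeta(0)=-\tfrac12$ and Riemann's value \eqref{Riemann2}, $\zeta'(0)=-\tfrac12\log(2\pi)$:
\[
\zeta_E'(0)=2\log 2\cdot\left(-\tfrac12\right)+(-1)\left(-\tfrac12\log(2\pi)\right)=\tfrac12\log\frac{\pi}{2}.
\]
Therefore $\exp(\zeta_E'(0))=\sqrt{\pi/2}$, so $C=1/\sqrt{\pi/2}$, which proves \eqref{tgamma2}.

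The only delicate points are justifying the interchange of $\partial_x$ and $\partial_s$, and the continuation of $\zeta_E(s)=(1-2^{1-s})\zeta(s)$ to $s=0$. Both follow from holomorphy and uniform convergence of the defining series on compact subsets of $\operatorname{Re}(s)>1$. As a consistency check, at $x=1$ the result gives $\bar\Gamma_1(1)=\sqrt{\pi/2}$, matching $\tilde\Gamma(1)=\pi/2$.
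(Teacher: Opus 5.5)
Your proof is correct and follows essentially the route the paper relies on. The paper gives no separate argument: it cites \cite{26HK} and effectively obtains the statement by combining \eqref{t-def}, i.e.\ $\tilde\Gamma(x)=\bar\Gamma_1(x)\exp(\zeta_E'(0))$, with the value $\zeta_E'(0)=\log\sqrt{\pi/2}$ recorded in \eqref{4.4}; you make both ingredients self-contained by deriving the proportionality from the defining equation \eqref{tgamma} with $\tilde\Gamma(1)=\pi/2$, and computing $\zeta_E'(0)$ from $\zeta_E(s)=(1-2^{1-s})\zeta(s)$ together with \eqref{Riemann2}.
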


We emphasize that $\tilde\Gamma(x)$ is the modified gamma function defined by 
(\ref{tgamma}), while $\bar\Gamma_1(x)$ is its normalized counterpart, related by 
(\ref{tgamma2}). In the multiple setting below, $\bar\Gamma_N$ generalizes 
$\bar\Gamma_1$.

The alternating Hurwitz zeta function also appears in the study of Ramanujan type 
identities. In a companion work \cite{YuanHuKim2026}, analogues of Ramanujan's celebrated 
identity for odd zeta values were established for the alternating Hurwitz zeta function, 
including infinite series representations for products of tangent and hyperbolic tangent 
functions, as well as convolution identities for the Dirichlet lambda function. These 
results further illustrate the rich analytic structure of $\zeta_E(s,x)$.

The multiple analogue of $\bar\Gamma_1$ will be introduced in the next subsection.

\subsection{Multiple alternating gamma functions}\label{subsec:multAlt}

Having established the definitions and basic properties of the alternating Hurwitz zeta 
function $\zeta_E(s,x)$ and its associated gamma functions in one variable 
$\tilde\Gamma(x)$ and $\bar\Gamma_1(x)$, we now turn to their natural higher-dimensional 
analogues. These extensions parallel Barnes' classical construction of multiple gamma 
functions and will play an essential role in the factorization formulas for infinite 
products that constitute the core of this paper.

Suppose that $\omega_1,\ldots,\omega_N$ are positive real numbers and $x$ is a complex 
number with positive real part. Slightly modifying the Barnes definition in \cite{Ba}, 
we define the multiple alternating Hurwitz zeta function
\begin{equation}\label{Barnes-E}
\zeta_{E,N}(s,x,(\omega_1,\ldots,\omega_N))=\sum_{m_1,\ldots,m_N=0}^\infty
\frac{(-1)^{m_1+\cdots+m_N}}{(x+m_1\omega_1+\cdots+m_N\omega_N)^s}
\end{equation}
for Re$(s)>0$. It has an analytic continuation to all $s\in\mathbb{C}$ as a meromorphic 
function, and it is holomorphic at $s=0$. Its analytic properties in both the complex 
plane $\mathbb{C}$ and the $p$-adic complex plane $\mathbb{C}_p$ have been systematically 
studied by the authors in \cite{arXiv}.

The normalized multiple alternating gamma function is then defined by
\begin{equation}\label{ru1-ome}
\begin{aligned}
\bar\Gamma_{N}(x,(\omega_1,\ldots,\omega_N))
&=\exp\left(\frac{\partial}{\partial s}\zeta_{E,N}(s,x,(\omega_1,\ldots,\omega_N))\bigg|_{s=0}\right) \\
&=\left(\prod_{m_1,\ldots,m_N\geq0}(m_1\omega_1+\cdots+m_N\omega_N+x)\right)^{(-1)^{m_1+\cdots+m_N+1}},
\end{aligned}
\end{equation}
where the last equality is understood in the sense of the zeta-regularized product of 
Deninger \cite{De}:
\begin{equation}\label{Deninger}
\prod_{\lambda\in\Lambda}\lambda
=\exp\left(-\frac{\d}{\d s}\sum_{\lambda\in\Lambda}\lambda^{-s}\bigg|_{s=0}\right),
\end{equation}
with $\sum_{\lambda\in\Lambda}\lambda^{-s}$ analytically continued to a holomorphic 
function at $s=0$ (see Manin \cite{Man} for a comprehensive survey).

In analogy with the Barnes modular constant $\rho_N$ in \eqref{Ba-mul_gama}, we introduce 
the alternating normalization constant $\delta_N$ by
\begin{equation}\label{def-delta-N}
\begin{split}
\delta_N(\omega_1,\ldots,\omega_N)
&= \exp\left(-\frac{\partial}{\partial s}\zeta_{E,N}(s,0,(\omega_1,\ldots,\omega_N))\bigg|_{s=0}\right) \\
&= \bar\Gamma_N(0,(\omega_1,\ldots,\omega_N))^{-1}.
\end{split}
\end{equation}
Then the un-normalized multiple alternating gamma function is defined as
\begin{equation}\label{ru1-ome-an}
\t\Gamma_{N}^B(x,(\omega_1,\ldots,\omega_N))
= \delta_N(\omega_1,\ldots,\omega_N)\,\bar\Gamma_{N}(x,(\omega_1,\ldots,\omega_N)),
\end{equation}
so that $\t\Gamma_{N}^B(0,(\omega_1,\ldots,\omega_N))=1$ and
\[
\bar\Gamma_{N}(x,(\omega_1,\ldots,\omega_N))
=\frac{\t\Gamma_{N}^B(x,(\omega_1,\ldots,\omega_N))}{\delta_N(\omega_1,\ldots,\omega_N)}.
\]
Just as $\rho_N$ normalizes $\Gamma_N^B$ in the classical case, $\delta_N$ normalizes 
$\tilde\Gamma_N^B$ in the alternating case.

For $N=1$ and $\omega_1=1$, using \eqref{t-def}, \eqref{ru1-ome}, \eqref{ru1-ome-an}, 
and Proposition~\ref{pro-t-a}, we obtain
\begin{equation}\label{tGa-bGa}
\bar\Gamma_1(x)=\bar\Gamma_1(x,1)
=\frac{\t\Gamma_1^B(x,1)}{\delta_1(1)}
=\frac{\t\Gamma(x)}{\sqrt{\frac{\pi}{2}}},
\end{equation}
or equivalently
\[
\delta_1(1)=\sqrt{\frac{\pi}{2}}=\exp\bigl(\zeta_E'(0)\bigr).
\]
Note that the above restrictions on the parameters are imposed only for convenience. They 
can be substantially relaxed; for instance, it suffices to assume that 
$m_1\omega_1+\cdots+m_N\omega_N+x>0$ for all $m_1,\ldots,m_N\in\{0,1,2,\ldots\}$. 
Furthermore, the resulting functions admit analytic continuation in all parameters to 
suitable domains of $\mathbb{C}$.

Now, the normalized multiple function $\mathcal C_{N}$ is defined as follows:
\begin{equation}\label{ru1-ome-nor}
\begin{aligned}
\mathcal C_{N}(x,(\omega_1,\ldots,\omega_N))&=\bar\Gamma_{N}(x,(\omega_1,\ldots,\omega_N))^{(-1)^{N-1}} \\
&\quad\times\bar\Gamma_{N}(\omega_1+\cdots+\omega_N-x,(\omega_1,\ldots,\omega_N))^{(-1)^{N}}.
\end{aligned}
\end{equation}

By setting $N=1$ in (\ref{ru1-ome}), by the Bohr--Mollerup Theorem, we also easily 
verify the following result (cf. \cite[Proposition 3]{WZ}).
\begin{proposition}\label{proposition2}
$$
\bar\Gamma_{1}(x,\omega_1)=\frac{\Gamma\left(\frac{x}{2\omega_1}\right)}{\sqrt{2 \omega_1}\Gamma\left(\frac{x+\omega_1}{2\omega_1}\right)}.
$$
\end{proposition}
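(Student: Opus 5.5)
The most direct route is to split the alternating sum by the parity of $m$, which turns $\zeta_E(s,x/\omega_1)$ into a difference of two ordinary Hurwitz zeta functions with step $2\omega_1$. For $\operatorname{Re}(s)>1$ we have
\begin{equation*}
\zeta_{E,1}(s,x,\omega_1)=\sum_{m=0}^\infty\frac{(-1)^m}{(x+m\omega_1)^s}
=(2\omega_1)^{-s}\left(\zeta\left(s,\frac{x}{2\omega_1}\right)-\zeta\left(s,\frac{x+\omega_1}{2\omega_1}\right)\right).
\end{equation*}
The left-hand side is entire in $s$, and the right-hand side is meromorphic. The simple poles of the two Hurwitz zeta functions at $s=1$ have equal residues and cancel, so the identity holds for all $s$ by analytic continuation.

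Next I differentiate at $s=0$. The product rule gives two terms. The term from $-\log(2\omega_1)(2\omega_1)^{-s}$ contributes $-\log(2\omega_1)\bigl(\zeta(0,a)-\zeta(0,b)\bigr)$, where $a=\frac{x}{2\omega_1}$ and $b=\frac{x+\omega_1}{2\omega_1}$. Using $\zeta(0,a)=\frac12-a$, this difference is $b-a=\frac12$, so the term equals $-\frac12\log(2\omega_1)$. The other term is $\zeta'(0,a)-\zeta'(0,b)$. By Proposition~\ref{proposition1} and \eqref{gam-1}, $\zeta'(0,y)=\log\Gamma(y)-\log\sqrt{2\pi}$. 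The $\sqrt{2\pi}$ constants cancel in the difference, leaving $\log\Gamma(a)-\log\Gamma(b)$. Exponentiating according to \eqref{ru1-ome} then gives
\[
\bar\Gamma_1(x,\omega_1)=\frac{\Gamma(a)}{\sqrt{2\omega_1}\,\Gamma(b)},
\]
which is the claimed formula.

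The main point to check is justifying the identity at $s=0$, that is, the pole cancellation and the analytic continuation; for $\operatorname{Re}(x)>0$ this is standard. If the Bohr--Mollerup approach the paper alludes to were preferred instead, one would verify the following for the right-hand side $F(x)$:
\begin{itemize}
\item the functional equation $F(x)F(x+\omega_1)=\frac{x}{\omega_1}\cdot\frac{1}{2}\cdot 2=$ the appropriate factor, using $\Gamma(a+1)=a\Gamma(a)$;
\item the corresponding equation $\bar\Gamma_1(x)\bar\Gamma_1(x+\omega_1)=x^{-1}\cdot$const, which follows from $\zeta_E(s,x)+\zeta_E(s,x+\omega_1)=x^{-s}$;
\item a log-convexity or growth condition, together with the normalization via the value at one point.
\end{itemize}
The direct zeta computation above avoids these uniqueness subtleties, so I would present that.
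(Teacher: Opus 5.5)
Your argument is correct, but it takes a different route from the paper's. The paper first rescales, $\zeta_E(s,x,\omega_1)=\omega_1^{-s}\zeta_E(s,x/\omega_1)$. This gives $\bar\Gamma_1(x,\omega_1)=\omega_1^{-\zeta_E(0,x/\omega_1)}\bar\Gamma_1(x/\omega_1)=\bar\Gamma_1(x/\omega_1)/\sqrt{\omega_1}$, where the value $\zeta_E(0,y)=\tfrac12$ is used without comment. The paper then quotes the Williams--Zhang evaluation $\zeta_E'(0,y)=\log\frac{\Gamma(y/2)}{\Gamma((y+1)/2)}-\frac12\log 2$ as an external input. Despite the sentence preceding the proposition, no Bohr--Mollerup argument is actually used.

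You instead split by parity with step $2\omega_1$, which reduces everything to Hurwitz zeta functions. Your only inputs are then Lerch's formula (Proposition~\ref{proposition1}, which the paper proves) and $\zeta(0,a)=\tfrac12-a$. In effect you reprove two things in one stroke: the Williams--Zhang formula (your computation at $\omega_1=1$) and the value $\zeta_E(0,y)=\tfrac12$ (your $b-a=\tfrac12$). You also handle general $\omega_1$ without a separate scaling step. The paper's version is shorter; yours is self-contained given the paper's preliminaries.

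Two small remarks. First, the pole cancellation you flag is not actually needed: both sides are meromorphic, agree for $\operatorname{Re}(s)>1$, and are regular at $s=0$. Second, the functional equations in your sketched Bohr--Mollerup alternative are misstated. One has exactly $F(x)F(x+\omega_1)=x^{-1}$, by $\Gamma(a+1)=a\Gamma(a)$ with $a=x/(2\omega_1)$. Likewise $\bar\Gamma_1(x,\omega_1)\bar\Gamma_1(x+\omega_1,\omega_1)=x^{-1}$, with constant equal to $1$. Either fix that bullet or drop it, as you already intend to do.
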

For example,
\begin{equation}\label{bga-uga}
\bar\Gamma_1(x)=\frac{\Gamma\left(\frac{x}{2}\right)}{\sqrt{2}\Gamma\left(\frac{x+1}{2}\right)}
\end{equation}
and
\begin{equation}\label{nor-1}
\mathcal C_{1}(x,\omega_1)=\bar\Gamma_{1}(x,\omega_1)\bar\Gamma_{1}(\omega_1-x,\omega_1)^{-1}=\cot\left(\frac{\pi x}{2\omega_1}\right),
\end{equation}
where we have used Proposition \ref{proposition2} and
$$
\Gamma(1-x)\Gamma(x)=\frac{\pi}{\sin\pi x},\quad x\not\in\mathbb Z.
$$

To simplify the notation we put
$$
\begin{aligned}
\mathcal C_{N}(x)&:=\mathcal C_{N}(x,(1,\ldots,1)), \\
\bar\Gamma_N(x)&:=\bar\Gamma_N(x,(1,\ldots,1)), \\
\zeta_{E, N}(s,x)&:=\zeta_{E, N}(s,x,(1,\ldots,1))).
\end{aligned}
$$
Hence
$$
\mathcal C_{N}(x)=\bar\Gamma_N(x)^{(-1)^{N-1}}\bar\Gamma_N(N-x)^{(-1)^{N}}
$$
and
\begin{equation}\label{def-ga-e-zeta}
\bar\Gamma_{N}(x)=\exp\left(\frac{\partial}{\partial s}\zeta_{E,N}(s,x)\biggl|_{s=0}\right).
\end{equation}
The following recurrence relation of $\bar\Gamma_{N}(x)$ is implied by a formula of the 
authors~\cite[Lemma 2.1 (1)]{arXiv}:
\begin{equation}
		\bar\Gamma_{N}(x+1)=\bar\Gamma_{N}(x)^{-1}\bar\Gamma_{N-1}(x), \quad \bar\Gamma_0(x)=x^{-1}.
\end{equation}

These definitions and properties of the multiple alternating gamma functions will serve 
as the foundation for the explicit product formulas and factorization identities 
established in the next section.

\section{Main results}\label{sec:main}

In this section we present our main results, organized around four interconnected themes. These include a generalized Lerch-type formula for the multiple alternating gamma functions (see Section~\ref{subsec:gen-lerch}), an explicit factorization in terms of Barnes' multiple gamma functions together with special values and asymptotic expansions (see Section~\ref{subsec:special-values}), an alternating analogue of Shintani's double sine function with a clean arithmetic dichotomy (see Section~\ref{subsec:double-sine}), and finally the alternating analogue of Mizuno's formula (see Section~\ref{subsec:mizuno-alt}), which serves as the culminating result of this paper.

\subsection{Generalized Lerch-type formula}\label{subsec:gen-lerch}

We begin with a generalized Lerch-type formula for the multiple alternating gamma functions $\bar\Gamma_N(x)$. Stated below as Proposition~\ref{gen-lerch2}, this identity expresses these higher-order analogues as zeta-regularized products with binomial coefficients, thereby extending the classical Lerch formula to the alternating setting.

As an immediate consequence, by setting $N=1$ and invoking Proposition~\ref{proposition2}, we recover the explicit evaluation of $\bar\Gamma_1(x)$ in terms of the Euler gamma function (see Corollary~\ref{Le-type}), which in turn yields a compact derivation of Wallis' celebrated product formula (see Remark~\ref{will-f}). A further specialization gives a closed-form evaluation of the alternating zeta derivative at $s=-1$ in terms of the Glaisher--Kinkelin constant (see Corollary~\ref{gam-di-(-1)}). We also establish a Stirling-type asymptotic expansion for the finite alternating products (see Theorem~\ref{St-form}), providing precise control over their growth as the truncation parameter tends to infinity.

\begin{proposition}[Generalized Lerch-type formula]\label{gen-lerch2}
For $N\in\mathbb{N},$ we have
$$
\bar\Gamma_{N}(x)=\prod_{n=0}^\infty(n+x)^{(-1)^{n+1}\binom{n+N-1}{N-1}},
$$
where {\rm Re}$(x)>0.$ In particular, 
\begin{equation}\label{bGa}
\bar\Gamma_1(x)=\prod_{n=0}^\infty(n+x)^{(-1)^{n+1}}.
\end{equation}
\end{proposition}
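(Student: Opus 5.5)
The plan is to regroup the multiple sum defining $\zeta_{E,N}(s,x)$ according to the value $n=m_1+\cdots+m_N$, and then read off the zeta-regularized product directly from Deninger's definition \eqref{Deninger}.

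\emph{Regrouping.} For fixed $n\ge 0$, the number of tuples $(m_1,\ldots,m_N)\in\mathbb{Z}_{\ge0}^N$ with $m_1+\cdots+m_N=n$ is $\binom{n+N-1}{N-1}$, by the standard stars-and-bars count. Each such tuple contributes the same term $(-1)^n(n+x)^{-s}$ to \eqref{Barnes-E}. Hence, for $\mathrm{Re}(s)$ large,
\begin{equation*}
\zeta_{E,N}(s,x)=\sum_{n=0}^\infty (-1)^n\binom{n+N-1}{N-1}(n+x)^{-s}.
\end{equation*}
The rearrangement is legitimate for $\mathrm{Re}(s)>N$. In that range the series converges absolutely, since the multiplicity grows like $n^{N-1}$.

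\emph{Regularized product.} Both sides are then continued analytically to a neighbourhood of $s=0$. The left side is holomorphic there, as recalled after \eqref{Barnes-E}. Consider the right side as the Dirichlet series attached to the multiset $\{n+x\}$, where $n+x$ carries multiplicity $(-1)^n\binom{n+N-1}{N-1}$. Its zeta-regularized product with these signed multiplicities, in the sense of \eqref{Deninger}, is $\exp\left(-\partial_s\zeta_{E,N}(s,x)|_{s=0}\right)$. Comparing with \eqref{def-ga-e-zeta} gives
\begin{equation*}
\bar\Gamma_N(x)=\exp\left(\partial_s\zeta_{E,N}(s,x)|_{s=0}\right)=\prod_{n=0}^\infty (n+x)^{(-1)^{n+1}\binom{n+N-1}{N-1}}.
\end{equation*}
The extra minus sign in the exponent $(-1)^{n+1}$ is exactly the inversion caused by the sign in Deninger's definition. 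This is consistent with the second line of \eqref{ru1-ome}. Setting $N=1$, where every binomial coefficient equals $1$, yields \eqref{bGa}.

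\emph{Main obstacle.} The only real subtlety is ensuring that the regrouped one-variable series has an analytic continuation to $s=0$ that coincides with the continuation of $\zeta_{E,N}$. One way is simply to define the continuation of the regrouped series via $\zeta_{E,N}$; by uniqueness of analytic continuation this is automatic.

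A more intrinsic alternative is to expand $\binom{n+N-1}{N-1}$ as a polynomial in $(n+x)$ with coefficients depending on $x$. This writes the series as a finite linear combination of $\zeta_E(s-k,x)$ for $0\le k\le N-1$. Each of these is entire, as recalled in Section~\ref{subsec:altHurwitz}, which gives the continuation explicitly and confirms holomorphy at $s=0$.
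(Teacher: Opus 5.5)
Your proof is correct and is essentially the paper's argument. Both reduce $\zeta_{E,N}(s,x)$ to the single series $\sum_{n\ge0}(-1)^n\binom{n+N-1}{N-1}(n+x)^{-s}$ and read off the regularized product directly from the definition of $\bar\Gamma_N$; the paper extracts the multiplicities through a Mellin integral and the binomial expansion of $(1+u^{-1})^{-N}$, which is just the generating-function form of your stars-and-bars count, while your explicit continuation as $\sum_{k=0}^{N-1}c_k(x)\,\zeta_E(s-k,x)$ sharpens the paper's bare appeal to analytic continuation and shows the series is entire in $s$.
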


By setting $\omega_1=1$ in Proposition \ref{proposition2},  from (\ref{tGa-bGa}) and (\ref{bGa}), we immediately get

\begin{corollary}[Lerch-type formula]\label{Le-type}
For $\operatorname{Re}(x)>0$, we have
\[
\bar\Gamma_1(x) = \prod_{n=0}^\infty (n+x)^{(-1)^{n+1}}
= \frac{1}{\sqrt{2}}\frac{\Gamma\left(\frac{x}{2}\right)}{\Gamma\left(\frac{x+1}{2}\right)}.
\]
Equivalently,
\begin{equation}\label{L-type-o}
\prod_{n=0}^\infty (n+x)^{(-1)^n}
= \frac{\sqrt{\frac\pi 2}}{\tilde\Gamma(x)}.
\end{equation}
In particular,
\[
\bar\Gamma_1(1) = \prod_{n=1}^\infty n^{(-1)^n} = \sqrt{\frac{\pi}{2}}.
\]
\end{corollary}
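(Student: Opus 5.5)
The plan is to chain together three results already in hand: Proposition~\ref{gen-lerch2} with $N=1$, Proposition~\ref{proposition2} with $\omega_1=1$, and the normalization \eqref{tGa-bGa}. No new analysis should be needed; each claimed identity is a specialization or a rearrangement of these.

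\textbf{First identity.} Specializing Proposition~\ref{gen-lerch2} to $N=1$ gives \eqref{bGa}, namely $\bar\Gamma_1(x)=\prod_{n\ge0}(n+x)^{(-1)^{n+1}}$. Here the regularized product means $\exp(\partial_s\zeta_E(s,x)|_{s=0})$, in the sense of Deninger \eqref{Deninger}. Setting $\omega_1=1$ in Proposition~\ref{proposition2}, which is \eqref{bga-uga}, gives the gamma-quotient expression $\Gamma(x/2)/(\sqrt2\,\Gamma((x+1)/2))$. Together these prove the first displayed identity. The restriction $\operatorname{Re}(x)>0$ is inherited from Proposition~\ref{gen-lerch2}.

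\textbf{Equivalent form \eqref{L-type-o}.} By definition, the regularized product with exponents $(-1)^n$ equals $\exp(-\zeta_E'(0,x))=\bar\Gamma_1(x)^{-1}$, since only the sign of the exponent is flipped. By \eqref{tGa-bGa}, which is Proposition~\ref{pro-t-a}, we have $\bar\Gamma_1(x)=\tilde\Gamma(x)/\sqrt{\pi/2}$. Inverting this yields \eqref{L-type-o}.

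\textbf{Value at $x=1$.} Put $x=1$ in the gamma-quotient formula. Using $\Gamma(1/2)=\sqrt\pi$ and $\Gamma(1)=1$,
\[
\bar\Gamma_1(1)=\frac{\Gamma(1/2)}{\sqrt2\,\Gamma(1)}=\sqrt{\pi/2}.
\]
Reindexing $n\mapsto n-1$ turns $\prod_{n\ge0}(n+1)^{(-1)^{n+1}}$ into $\prod_{n\ge1}n^{(-1)^n}$, because $\zeta_E(s,1)=\zeta_E(s)$.

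As a consistency check, $\tilde\Gamma(1)=\pi/2$ combined with \eqref{tGa-bGa} also gives $\bar\Gamma_1(1)=\sqrt{\pi/2}$.

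\textbf{Main obstacle.} The only delicate step is the sign and inversion convention in the zeta-regularized product with signed exponents. I would confirm it by writing the product as $\exp\bigl(\pm\frac{\partial}{\partial s}\sum_n(-1)^n(n+x)^{-s}\big|_{s=0}\bigr)$ and matching the sign against \eqref{ru1-ome} and \eqref{Deninger}.
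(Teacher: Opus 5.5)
Your proposal is correct and is essentially the paper's argument: the paper obtains the corollary immediately by combining \eqref{bGa} (Proposition~\ref{gen-lerch2} with $N=1$), Proposition~\ref{proposition2} with $\omega_1=1$, and the normalization \eqref{tGa-bGa}, exactly as you do. Your sign check against \eqref{Deninger} and the evaluation at $x=1$ via $\Gamma(1/2)=\sqrt{\pi}$ only make explicit details that the paper leaves to the reader.
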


\begin{corollary}\label{gam-psi-def}
For $\operatorname{Re}(x)>0$, we have
\[
\prod_{n=0}^{\infty}\left(\frac{n+x}{n+1}\right)^{(-1)^{n+1}}
=
\frac{\Gamma\left(\frac{x}{2}\right)}{\sqrt{\pi}\Gamma\left(\frac{x+1}{2}\right)}.
\]
\end{corollary}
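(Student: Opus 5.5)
The plan is to reduce Corollary~\ref{gam-psi-def} to two evaluations already available: Corollary~\ref{Le-type} for general $x$, and its special case $x=1$. The left-hand side is understood as a zeta-regularized product. By definition \eqref{normal}--\eqref{Deninger}, the regularized product of a sequence $a_n$ with alternating exponents $(-1)^{n+1}$ equals $\exp\bigl(\frac{\partial}{\partial s}\sum_n (-1)^n a_n^{-s}\big|_{s=0}\bigr)$. So the first thing to fix is that the quotient
\[
\prod_{n=0}^{\infty}\left(\frac{n+x}{n+1}\right)^{(-1)^{n+1}}
\]
is to be read as the ratio of the two regularized products $\prod_{n\ge0}(n+x)^{(-1)^{n+1}}$ and $\prod_{n\ge0}(n+1)^{(-1)^{n+1}}$. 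On the level of zeta functions, this ratio is
\[
\exp\bigl(\zeta_E'(0,x)-\zeta_E'(0,1)\bigr)=\frac{\bar\Gamma_1(x)}{\bar\Gamma_1(1)},
\]
using \eqref{tgamma1} and Proposition~\ref{gen-lerch2}.

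I would add one remark: this termwise ratio is in fact convergent in the ordinary sense. Pairing consecutive terms shows that $\sum_n (-1)^{n+1}\log\frac{n+x}{n+1}$ converges, since $\log\frac{n+x}{n+1}=\frac{x-1}{n}+O(n^{-2})$ and the alternating series of the main term converges. Its value also agrees with the regularized one. The reason is that $\sum_n(-1)^n\bigl((n+x)^{-s}-(n+1)^{-s}\bigr)$ converges uniformly near $s=0$ after pairing terms, so one may differentiate term by term.

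Next I substitute Corollary~\ref{Le-type}. It gives
\[
\bar\Gamma_1(x)=\frac{\Gamma(x/2)}{\sqrt2\,\Gamma((x+1)/2)}
\qquad\text{and}\qquad
\bar\Gamma_1(1)=\sqrt{\pi/2}.
\]
The value $\bar\Gamma_1(1)=\sqrt{\pi/2}$ also follows from the first formula at $x=1$, since $\Gamma(1/2)=\sqrt\pi$ and $\Gamma(1)=1$. Dividing, the two factors of $\sqrt2$ cancel and we get $\Gamma(x/2)/\bigl(\sqrt\pi\,\Gamma((x+1)/2)\bigr)$, which is the claimed right-hand side.

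The main obstacle is not the algebra but justifying that the product of quotients equals the quotient of regularized products. Zeta-regularized products are not multiplicative in general. Here, however, both underlying zeta functions $\zeta_E(s,x)$ and $\zeta_E(s,1)$ are entire, so their difference is the zeta function of the quotient sequence in the sense of the termwise sum. The uniform convergence argument above then handles the derivative at $s=0$.
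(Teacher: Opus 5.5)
Your proof is correct and follows the paper's argument: divide the Lerch-type evaluation $\bar\Gamma_1(x)=\Gamma(x/2)/\bigl(\sqrt2\,\Gamma((x+1)/2)\bigr)$ by its value $\bar\Gamma_1(1)=\sqrt{\pi/2}$ at $x=1$. Your extra check, that the termwise product of quotients converges and agrees with $\exp\bigl(\zeta_E'(0,x)-\zeta_E'(0,1)\bigr)$ via uniform convergence of the paired series near $s=0$, justifies the division step that the paper simply takes for granted.
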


\begin{remark}[Wallis formula]\label{will-f}
Taking $x=2$ in Corollary~\ref{gam-psi-def} gives
\[
\frac{1}{2}\prod_{n=1}^{\infty}\left(\frac{n+2}{n+1}\right)^{(-1)^{n+1}}
=\frac{\Gamma(1)}{\sqrt{\pi}\,\Gamma\left(\frac32\right)}
= \frac{2}{\pi}.
\]
Inverting the product yields the classical Wallis formula:
\[
\frac{2\cdot2}{1\cdot3}\cdot\frac{4\cdot4}{3\cdot5}\cdot\frac{6\cdot6}{5\cdot7}\cdots
= \frac{\pi}{2}.
\]\end{remark}

\begin{corollary}[$\exp(\zeta_E'(-1))$]\label{gam-di-(-1)}
$$
\prod_{n=1}^\infty n^{(-1)^nn}=2^{-\frac13}e^{-\frac14}A^3,
$$
where $A=1.282427130\cdots$ is the Glaisher-Kinkelin constant defined by \cite[p. 39, (2)]{SC}
\begin{equation}\label{Adef}
A=\lim_{n\to\infty}\frac{\prod_{k=1}^n k^k}{n^{\frac{n^2}{2}+\frac{n}{2}+\frac1{12}}}e^{\frac{n^2}{4}}=e^{-\zeta'(-1)+\frac1{12}}.
\end{equation}
\end{corollary}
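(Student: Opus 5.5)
The plan is to interpret the product as a zeta-regularized product in the sense of \eqref{normal}. The associated Dirichlet series is
\[
\sum_{n=1}^\infty \frac{(-1)^n n}{n^{s}}=\sum_{n=1}^\infty\frac{(-1)^n}{n^{s-1}}=-\zeta_E(s-1).
\]
Hence
\[
\prod_{n=1}^\infty n^{(-1)^n n}=\exp\Bigl(-\tfrac{\d}{\d s}\bigl(-\zeta_E(s-1)\bigr)\big|_{s=0}\bigr)=\exp(\zeta_E'(-1)).
\]
This matches the heading of the corollary. Equivalently, one can read this off from Proposition~\ref{gen-lerch2} with $N=2$, $x=1$, whose exponents are $(-1)^{n+1}(n+1)$. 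That route gives the same object up to the sign convention, since $\zeta_{E,2}(s,1)=\sum_n(-1)^n(n+1)(n+1)^{-s}$. The task therefore reduces to evaluating $\zeta_E'(-1)$.

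Next, write $\zeta_E(s)=(1-2^{1-s})\zeta(s)$, which holds for $\operatorname{Re}(s)>1$ and then everywhere by analytic continuation. Differentiating gives
\[
\zeta_E'(s)=2^{1-s}\log 2\,\zeta(s)+(1-2^{1-s})\zeta'(s).
\]
At $s=-1$ this becomes
\[
\zeta_E'(-1)=4\log2\,\zeta(-1)+(1-4)\zeta'(-1)=-\tfrac{\log 2}{3}-3\zeta'(-1),
\]
using $\zeta(-1)=-1/12$.

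Finally, substitute the definition \eqref{Adef}, namely $\zeta'(-1)=\tfrac1{12}-\log A$. This gives
\[
\zeta_E'(-1)=-\tfrac{\log2}{3}-\tfrac14+3\log A.
\]
Exponentiating yields $2^{-1/3}e^{-1/4}A^3$. As a cross-check, this agrees with the value listed in the Notation section.

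The only delicate point is the first step. One must fix the sign convention: the series attached to the sequence with exponent $(-1)^n n$ is $-\zeta_E(s-1)$, not $\zeta_E(s-1)$. One must also note that $\zeta_E$ is entire, so differentiation at $s=-1$ is legitimate. Everything after that is routine algebra.
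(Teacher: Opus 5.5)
Your proposal is correct and follows essentially the same route as the paper: identify the product with $\exp(\zeta_E'(-1))$, then evaluate $\zeta_E'(-1)$ via $\zeta_E(s)=(1-2^{1-s})\zeta(s)$, $\zeta(-1)=-1/12$ and $\zeta'(-1)=\tfrac1{12}-\log A$. Your identification through the regularized-product definition \eqref{normal}, using the series $-\zeta_E(s-1)$, is a more explicit version of the paper's formal step $\zeta_E'(s)=\log\prod_{n} n^{(-1)^n n^{-s}}$ continued to $s=-1$, and your differentiation supplies the computation the paper summarizes as ``one obtains.''
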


\begin{remark}
The identity $\prod_{n=1}^{\infty} n^{(-1)^n n}=\exp\bigl(\zeta_E'(-1)\bigr)$ deserves a brief explanation. 
It is most naturally interpreted in the sense of zeta regularization as the regularized product associated with the sequence in 
which each positive integer $(n)$ occurs with multiplicity $((-1)^nn).$ Equivalently, it may be written formally as
$$
\frac{(2\cdot2)(4\cdot4\cdot4\cdot4)(6\cdot6\cdot6\cdot6\cdot6\cdot6)\cdots}{(1)(3\cdot3\cdot3)(5\cdot5\cdot5\cdot5\cdot5)
(7\cdot7\cdot7\cdot7\cdot7\cdot7\cdot7)\cdots}.
$$
The above divergent product acquires a well-defined value through the zeta-regularization procedure, yielding precisely
$\exp\bigl(\zeta_E'(-1)\bigr).$
\end{remark}

\begin{theorem}[Stirling type formula]\label{St-form}
For $N\in\mathbb{N},$ we have
$$
\log\left(\prod_{n=1}^Nn^{(-1)^n}\right)=\log\sqrt{\frac\pi2}+\frac{(-1)^N}{2}\log N+\frac{(-1)^N}{4N}+O(N^{-2}).
$$
\end{theorem}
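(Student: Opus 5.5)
The plan is to avoid the zeta-regularized machinery for the finite product and reduce everything to factorials. Then I apply the classical Stirling expansion for $\log\Gamma$, splitting into the cases $N$ even and $N$ odd. The constant $\log\sqrt{\pi/2}$ that appears will agree with $\bar\Gamma_1(1)=\sqrt{\pi/2}$ from Corollary~\ref{Le-type}, but the proof does not need that corollary.

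\emph{Even case.} Let $N=2M$. Pairing factors gives
\[
\prod_{n=1}^{2M} n^{(-1)^n}=\frac{2\cdot4\cdots(2M)}{1\cdot3\cdots(2M-1)}=\frac{(2^M M!)^2}{(2M)!}.
\]
Hence
\[
\log\prod_{n=1}^{2M} n^{(-1)^n}=2M\log 2+2\log M!-\log(2M)!.
\]
I will insert $\log K!=K\log K-K+\tfrac12\log(2\pi K)+\tfrac1{12K}+O(K^{-3})$ for $K=M$ and $K=2M$. The terms $M\log M$, $M$ and $M\log 2$ cancel identically. What remains is
\[
\log(2\pi M)-\tfrac12\log(4\pi M)+\tfrac1{6M}-\tfrac1{24M}=\tfrac12\log(\pi M)+\tfrac1{8M}+O(M^{-3}).
\]
Substituting $M=N/2$ gives $\log\sqrt{\pi/2}+\tfrac12\log N+\tfrac1{4N}+O(N^{-3})$. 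This is the claimed formula with $(-1)^N=1$, and with an even better error term.

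\emph{Odd case.} Let $N=2M+1$. Then $\prod_{n=1}^{N} n^{(-1)^n}=N^{-1}\prod_{n=1}^{N-1}n^{(-1)^n}$, so the even case applied to $N-1$ gives
\[
\log\sqrt{\tfrac{\pi}{2}}+\tfrac12\log(N-1)+\tfrac1{4(N-1)}-\log N+O(N^{-2}).
\]
I then expand $\tfrac12\log(N-1)=\tfrac12\log N-\tfrac1{2N}+O(N^{-2})$ and $\tfrac1{4(N-1)}=\tfrac1{4N}+O(N^{-2})$. This yields
\[
\log\sqrt{\tfrac{\pi}{2}}-\tfrac12\log N-\tfrac1{4N}+O(N^{-2}),
\]
which is the claimed formula with $(-1)^N=-1$. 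In this case the error is genuinely $O(N^{-2})$, which explains the form of the statement.

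The only delicate step is the bookkeeping of the $1/K$ terms in Stirling's series. The leading terms cancel exactly, so an error in the $\tfrac1{12K}$ coefficients would directly corrupt the $\tfrac{(-1)^N}{4N}$ term. I would double-check that coefficient with the Wallis-type expansion $\binom{2M}{M}^{-1}4^M=\sqrt{\pi M}\,\bigl(1+\tfrac1{8M}+O(M^{-2})\bigr)$.
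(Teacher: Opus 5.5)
Your proof is correct, but it takes a genuinely different route from the paper's.

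\textbf{How the paper argues.} The paper never passes to factorials. Lemma~\ref{lem-St} identifies $\bigl(\prod_{n=1}^N n^{(-1)^n}\bigr)^{(-1)^N}(\pi/2)^{(1-(-1)^N)/2}$ with $\sqrt{\pi/2}$ divided by the regularized tail $\prod_{n\ge1}(n+N)^{(-1)^n}=\bar\Gamma_1(N+1)$. It does this by manipulating zeta-regularized products, and the parity split enters there through the sign of $(-1)^{n-N}$. The $\log N$ and $1/N$ terms are then read off from the large-$x$ expansion \eqref{Ezeta-ex2} of $\zeta_E'(0,x)$, which is obtained by differentiating an imported asymptotic expansion of $\zeta_E(s,x)$ in $s$. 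The constant is inherited from $\bar\Gamma_1(1)=\sqrt{\pi/2}$.

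\textbf{How you argue, and what it buys.} You recognize the even product as the Wallis ratio $4^M/\binom{2M}{M}$ and use only the classical Stirling series. Your bookkeeping is right: $\tfrac1{6M}-\tfrac1{24M}=\tfrac1{8M}$, and $\tfrac12\log(\pi M)$ becomes $\log\sqrt{\pi/2}+\tfrac12\log N$ with $\tfrac1{8M}=\tfrac1{4N}$. Nothing in your argument depends on regularized tails, on Corollary~\ref{Le-type}, or on differentiating an asymptotic expansion with respect to $s$. It is fully elementary. Since your constant is derived independently, combining it with Lemma~\ref{lem-St} (read with $\bar\Gamma_1(1)$ in place of $\sqrt{\pi/2}$) and \eqref{Ezeta-ex2} even gives an independent check that $\bar\Gamma_1(1)=\sqrt{\pi/2}$.

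\textbf{What the paper's route buys.} It exhibits the theorem as the Stirling expansion \eqref{gama-stri} of $\bar\Gamma_1$ in disguise. Higher-order terms therefore come with Euler-polynomial coefficients at no extra cost. It also explains the constant as the regularized value $\prod_{n\ge1}n^{(-1)^n}$.

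\textbf{A correction to your closing remark.} The odd case is not ``genuinely'' $O(N^{-2})$. Indeed
$\tfrac12\log(1-\tfrac1N)+\tfrac1{4(N-1)}=-\tfrac1{4N}+\bigl(-\tfrac14+\tfrac14\bigr)N^{-2}+O(N^{-3})$,
so the $N^{-2}$ terms cancel. Both parities give error $O(N^{-3})$, consistent with \eqref{gama-stri} containing only odd powers of $1/x$. The $O(N^{-2})$ in the statement is simply not sharp. This does not affect the validity of your proof.
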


\subsection{The special values of multiple gamma functions}\label{subsec:special-values}

Having established the foundational product identities for the multiple alternating gamma functions, we now turn to a deeper structural investigation. The central result of this subsection is Theorem~\ref{gam2-prod}, which provides an explicit factorization of $\bar\Gamma_N(x)$ in terms of the classical Barnes multiple gamma functions $\Gamma_N$. This representation is of considerable practical value, as it allows us to transfer the extensive body of knowledge on special values of Barnes' functions to their alternating analogues. In particular, we obtain the special value of $\bar\Gamma_2(1)$ as a product of $\Gamma_2(1/2)$ and $\Gamma_2(3/2)$, which we then express explicitly in terms of the Glaisher--Kinkelin constant (see Theorem~\ref{gam2-prod}). Along the way, we also derive a useful scaling formula for alternating products (see Example~\ref{exam-2}), from which we obtain a cotangent-type product identity (see \eqref{tan-cot}) and a full Stirling-type asymptotic expansion for products with arbitrary positive scaling parameter $\tau$ (see Example~\ref{ga-asy}). The subsection culminates in a multiplication formula of Gauss--Legendre type (see Theorem~\ref{thm-g-Ga-Le} and Example~\ref{cor-exam4}), which relates the function at a scaled argument to a product of its values at equally spaced shifts, a result that has no direct classical analogue in the non-alternating theory.

\begin{theorem}\label{gam2-prod}
The multiple alternating gamma function $\bar\Gamma_N$ can be expressed in terms of the Barnes multiple gamma functions $\Gamma_N$ as
\begin{align*}
\bar\Gamma_N(x)
&=\exp\left(\sum_{k=0}^N(-1)^k\binom{N}{k}\log\Gamma_N\!\left(\frac{k+x}{2}\right)-2^{-N}\log2\right).
\end{align*}
Equivalently,
\begin{align*}
\prod_{n=0}^\infty(n+x)^{(-1)^{n+1}\binom{n+N-1}{N-1}}
=2^{-2^{-N}}\prod_{k=0}^N\Gamma_N\!\left(\frac{k+x}{2}\right)^{(-1)^k\binom Nk}.
\end{align*}
In particular, combining Proposition~\ref{gen-lerch2} for $N=2$ with Corollary~\ref{gam-di-(-1)}, we obtain the special value
\[
\bar\Gamma_2(1)
=\prod_{n=0}^{\infty}(n+1)^{(-1)^{n+1}(n+1)}
=\frac{\Gamma_2\!\left(\frac12\right)\Gamma_2\!\left(\frac32\right)}{2^{\frac14}\,\Gamma_2(1)^2},
\]
which satisfies
\[
\Gamma_2\!\left(\frac12\right)\Gamma_2\!\left(\frac32\right)
=2^{-\frac{1}{12}}e^{-\frac{1}{4}}A^3\,\Gamma_2(1)^2.
\]
\end{theorem}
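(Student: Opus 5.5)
The plan is to decompose the multiple alternating Hurwitz zeta function $\zeta_{E,N}(s,x)$ from \eqref{Barnes-E} into Barnes multiple zeta functions with parameters $(1,\ldots,1)$ by splitting every summation index according to its parity. Write each $m_i=2q_i+r_i$ with $q_i\ge 0$ and $r_i\in\{0,1\}$. Then
\[
x+m_1+\cdots+m_N=2\Bigl(q_1+\cdots+q_N+\tfrac{x+r_1+\cdots+r_N}{2}\Bigr),
\qquad (-1)^{m_1+\cdots+m_N}=(-1)^{r_1+\cdots+r_N}.
\]
Grouping the $2^N$ choices of $(r_1,\ldots,r_N)$ by $k=r_1+\cdots+r_N$ gives, for $\operatorname{Re}(s)>N$,
\[
\zeta_{E,N}(s,x)=2^{-s}\sum_{k=0}^N(-1)^k\binom Nk\,\zeta_N\!\left(s,\tfrac{x+k}{2}\right),
\]
with $\zeta_N(s,y)=\zeta_N(s,y,(1,\ldots,1))$. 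Both sides are meromorphic in $s$, so the identity persists everywhere. The left side is holomorphic at $s=0$, and each $\zeta_N(s,y)$ is holomorphic there as well, since its poles lie only at $s=1,\ldots,N$.

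Next I differentiate at $s=0$ and use \eqref{rho} and \eqref{def-ga-e-zeta}:
\[
\log\bar\Gamma_N(x)=\sum_{k=0}^N(-1)^k\binom Nk\log\Gamma_N\!\left(\tfrac{x+k}{2}\right)-\log 2\cdot\sum_{k=0}^N(-1)^k\binom Nk\,\zeta_N\!\left(0,\tfrac{x+k}{2}\right).
\]
The main step is to show that the last sum equals $2^{-N}$. For this I use the classical fact that $f(y):=\zeta_N(0,y)$ is a polynomial in $y$ of degree $N$, namely $f(y)=\frac{(-1)^N}{N!}B_N^{(N)}(y)$ in terms of the generalized Bernoulli polynomials \eqref{h-ber-def}, with leading coefficient $(-1)^N/N!$. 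The sum is $(-1)^N\Delta_{1/2}^N f(x/2)$, where $\Delta_h$ is the forward difference operator \eqref{fo-dif-def}. The $N$-th difference of a degree-$N$ polynomial with step $h$ equals $N!\,h^N$ times its leading coefficient, so the sum equals
\[
(-1)^N\cdot N!\,2^{-N}\cdot\frac{(-1)^N}{N!}=2^{-N},
\]
independently of $x$. This yields the first formula. The product form then follows from Proposition~\ref{gen-lerch2}.

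For the special value, I take $N=2$ and $x=1$. The terms $k=0,1,2$ contribute $\Gamma_2(\frac12)$, $\Gamma_2(1)^{-2}$ and $\Gamma_2(\frac32)$, and the constant contributes $2^{-1/4}$, which gives the stated quotient for $\bar\Gamma_2(1)$. On the other hand, reindexing with $m=n+1$ gives
\[
\zeta_{E,2}(s,1)=\sum_{n\ge0}(-1)^n(n+1)(n+1)^{-s}=\sum_{m\ge1}(-1)^{m-1}m^{1-s}=\zeta_E(s-1).
\]
Hence $\bar\Gamma_2(1)=\exp(\zeta_E'(-1))=\prod_{n\ge1}n^{(-1)^nn}$, and Corollary~\ref{gam-di-(-1)} evaluates this as $2^{-1/3}e^{-1/4}A^3$. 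Equating the two expressions and multiplying through by $2^{1/4}\Gamma_2(1)^2$ gives
\[
\Gamma_2\!\left(\tfrac12\right)\Gamma_2\!\left(\tfrac32\right)=2^{-1/12}e^{-1/4}A^3\,\Gamma_2(1)^2.
\]

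The expected obstacle is the polynomiality of $\zeta_N(0,y)$ together with its leading coefficient. If needed, I would prove it by induction from $\zeta_N(s,y)-\zeta_N(s,y+1)=\zeta_{N-1}(s,y)$ with $\zeta_0(0,y)=1$. At $s=0$ this says $f$ has backward difference $-\zeta_{N-1}(0,y)$, which forces $f$ to be a polynomial of degree $N$ with leading coefficient $(-1)^N/N!$, the unknown constant of summation being fixed by Barnes' explicit formula.
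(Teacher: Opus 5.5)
Your proposal is correct and is essentially the paper's proof. It uses the same parity split $m_i=2q_i+r_i$, giving $\zeta_{E,N}(s,x)=2^{-s}\sum_{k=0}^N(-1)^k\binom Nk\zeta_N\bigl(s,\frac{x+k}{2}\bigr)$. It evaluates the coefficient of $\log 2$ as $2^{-N}$ the same way, via $\zeta_N(0,y)=\frac{(-1)^N}{N!}B_N^{(N)}(y)$ and an $N$-th finite difference; you take step $\frac12$ on $\zeta_N(0,\cdot)$, where the paper takes step $1$ on $B_N^{(N)}(\cdot/2)$. It also makes the same $N=2$, $x=1$ comparison, and your identity $\zeta_{E,2}(s,1)=\zeta_E(s-1)$ makes explicit the link $\bar\Gamma_2(1)=\exp(\zeta_E'(-1))$ that the paper leaves implicit.

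There are two small slips. First, with the normalization \eqref{fo-dif-def} the sum is $(-1)^N2^{-N}\Delta_{1/2}^Nf(x/2)$, and $\Delta_{1/2}^N$ of a degree-$N$ polynomial is $N!$ times its leading coefficient. Your two misplaced factors $2^{-N}$ cancel, so the value $2^{-N}$ stands. Second, your fallback induction does not by itself give polynomiality. The relation $f(y)-f(y+1)=\zeta_{N-1}(0,y)$ fixes $f$ only up to a $1$-periodic function, which a step-$\frac12$ difference need not annihilate. Either cite the Bernoulli-polynomial formula as you do, or observe that the sum is just $\zeta_{E,N}(0,x)$, which the Mellin representation \eqref{B-E-def-3-2} evaluates as $e^{-xt}(1+e^{-t})^{-N}\big|_{t=0}=2^{-N}$.
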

\begin{remark}
As early as in 1656, the British mathematician John Wallis \cite{Wallis} showed the above remarkable formula  in his book  ``Arithmetica Infinitorum."
After his work, there appears  many methods to prove it, including the well-known ones based on the formula for integrals of powers of $\sin x$ from the inductive method or based on  the infinite product expansion of $\sin x$. Recently, Miller \cite{Miller} found a probabilistic proof by using the Students $t$-distribution, and Friedmann and Hagen \cite{Friedmann} presented an quantum mechanical derivation based on the spectrum of the hydrogen in the physical three dimensions. In 1994, Sondow \cite{Sondow} derived (\ref{Riemann2})  from Wallis' formula by using Euler's  transformation of series.
\end{remark}

\begin{example}\label{exam-2}
$$
\begin{aligned}
\prod_{n=0}^\infty(n\tau+x)^{(-1)^{n+1}}
&=\prod_{n=0}^\infty\left(\tau\left(n+\frac x\tau\right)\right)^{(-1)^{n+1}} \\
&=\tau^{-\zeta_E\left(0,\frac{x}{\tau}\right)}\prod_{n=0}^\infty\left(n+\frac x\tau\right)^{(-1)^{n+1}} \\
&=\frac{\bar\Gamma_1\left(\frac{x}{\tau}\right)}{\sqrt\tau}.
\end{aligned}
$$
\end{example}

For example, if we set $\tau=2$ and $x=1,$ by Proposition \ref{proposition2}, with $\omega_1=1,$ we find easily that
$$
\begin{aligned}
\prod_{n=0}^\infty(2n+1)^{(-1)^{n+1}}=\frac{\bar\Gamma_1\left(\frac{1}{2}\right)}{\sqrt2}=2^{-\frac{3}{2}}\pi^{-1}\Gamma\left(\frac{1}{4}\right)^2,
\end{aligned}
$$
since $\Gamma\left({1}/{4}\right)\Gamma\left({3}/{4}\right)=\sqrt{2}\,\pi.$

Replacing $-x$ with $x$ in Example \ref{exam-2} and using $\bar\Gamma_1(x+1)=x^{-1}\bar\Gamma_1(x)^{-1}$ gives
\begin{equation}\label{tau-x}
\prod_{n=1}^\infty(n\tau-x)^{(-1)^{n+1}}=\frac{\sqrt\tau}{\bar\Gamma_1\left(1-\frac{x}{\tau}\right)}.
\end{equation}
Now Example \ref{exam-2}, (\ref{nor-1}) and (\ref{tau-x}) imply that
\begin{equation}\label{tan-cot}
\prod_{n=0}^\infty(n\tau+x)^{(-1)^{n+1}}\prod_{n=1}^\infty(n\tau-x)^{(-1)^{n+1}}
=\frac{\bar\Gamma_1\left(\frac{x}{\tau}\right)}{\bar\Gamma_1\left(1-\frac{x}{\tau}\right)}=\cot\left(\frac{\pi x}{2\tau}\right).
\end{equation}
The classical formula for the derivative of the alternating zeta function at $0$ gives
\begin{equation}\label{pro-an-o}
\prod_{n=1}^\infty(n\tau)^{(-1)^{n}}=\tau^{-\zeta_E(0)}\prod_{n=1}^\infty n^{(-1)^{n}}=\sqrt{\frac{\pi}{2\tau}}
\end{equation}
by applying $x=1$ in Example \ref{exam-2} and Corollary \ref{Le-type}.

We recall the Stirling-type asymptotic expansion for the modified gamma function $\bar\Gamma_1(x)$ (see \cite[Example 3.11]{24HK}):
 \begin{equation}\label{gama-stri}
\log\bar\Gamma_1(x)\sim-\frac12\log x+\frac14 x^{-1}-\frac12\sum_{k=1}^\infty\frac{E_{2k+1}(0)}{2k+1}x^{-2k-1},
\end{equation}
as $|x|\to\infty$ in the sector $|\arg x|\le \pi - \delta$, where $\delta>0$ is fixed. Here $E_{2k+1}(0)$ $(k\ge 0)$ denotes 
the special values of odd-order Euler polynomials $E_{2k+1}(x)$ at 0 (see \cite[p. xxxi]{GR}).

Assuming that \eqref{gama-stri} holds with \(x\) replaced by \({x}/{\tau}\), where Re$(x)>0$ and Re$(\tau)>0,$ and applying Example~\ref{exam-2}, we obtain the following full Stirling expansion.

\begin{example}[Stirling expansion]\label{ga-asy}
Let {\rm Re}$(\tau)>0.$ Then
$$
\log \prod_{m=0}^\infty \left(m\tau+x\right)^{(-1)^{m+1}}
\sim -\frac12\log x+\frac14\left(\frac{x}{\tau}\right)^{-1}-\frac12\sum_{k=1}^\infty\frac{E_{2k+1}(0)}{2k+1}\left(\frac{x}{\tau}\right)^{-2k-1},
$$
as \(|x|\to\infty\) uniformly in the sector $|\arg x|\le \pi-\delta,$ where $\delta>0$ is fixed.
\end{example}

\begin{theorem}[Multiplication formula]\label{thm-g-Ga-Le}
Assume that $N_1$ and $N_2$  are odd integers. Then we have
$$
\sqrt{N_2}\prod_{m_1=0}^{N_1-1}\bar\Gamma_1\left(N_2x+\frac{N_2}{N_1}m_1\right)^{(-1)^{m_1}}
=\sqrt{N_1}\prod_{m_2=0}^{N_2-1}\bar\Gamma_1\left(N_1x+\frac{N_1}{N_2}m_2\right)^{(-1)^{m_2}}.
$$
\end{theorem}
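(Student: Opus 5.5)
Both sides are products of the same double lattice, regularized in two different ways, so the plan is to compare them through the zeta functions they come from. By Example~\ref{exam-2}, for $\tau>0$ we have $\bar\Gamma_1(y/\tau)=\sqrt{\tau}\prod_{n\ge0}(n\tau+y)^{(-1)^{n+1}}$, i.e.
\[
\log\bar\Gamma_1(y/\tau)=\tfrac12\log\tau+\frac{\partial}{\partial s}\sum_{n\ge0}\frac{(-1)^n}{(n\tau+y)^s}\bigg|_{s=0}.
\]
Apply this on the left-hand side with $\tau=1/N_2$ and $y=x+m_1/N_1$, so that $y/\tau=N_2x+\frac{N_2}{N_1}m_1$. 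Then $\log$ of the left-hand side equals
\[
\tfrac12\log N_2-\tfrac12\log N_2\sum_{m_1=0}^{N_1-1}(-1)^{m_1}+\frac{\partial}{\partial s}Z(s)\bigg|_{s=0},
\]
where
\[
Z(s)=\sum_{m_1=0}^{N_1-1}\sum_{n\ge0}\frac{(-1)^{m_1+n}}{\bigl(x+\frac{m_1}{N_1}+\frac{n}{N_2}\bigr)^s}.
\]
Since $N_1$ is odd, $\sum_{m_1=0}^{N_1-1}(-1)^{m_1}=1$, so the constant terms cancel and $\log(\text{LHS})=Z'(0)$. By the symmetric computation, $\log(\text{RHS})=W'(0)$, where $W$ is obtained from $Z$ by swapping the roles of $(N_1,m_1)$ and $(N_2,n)$.

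The key step is to show that $Z(s)=W(s)$ identically. Write $N=N_1N_2$. The lattice points satisfy $\frac{m_1}{N_1}+\frac{n}{N_2}=\frac{m_1N_2+nN_1}{N}$. Split $n=qN_1+r$ with $0\le r<N_1$ and $q\ge0$. Because $N_1$ is odd, $(-1)^n=(-1)^{q+r}$, and $Z(s)$ becomes
\[
Z(s)=\sum_{q\ge0}(-1)^q\sum_{m_1,r=0}^{N_1-1}\frac{(-1)^{m_1+r}}{\bigl(x+q+\frac{m_1N_2+rN_1}{N}\bigr)^s}.
\]
Symmetrically, splitting $m_2=q'N_2+r'$ in $W$ (using that $N_2$ is odd) gives
\[
W(s)=\sum_{q'\ge0}(-1)^{q'}\sum_{m_2,r'=0}^{N_2-1}\frac{(-1)^{m_2+r'}}{\bigl(x+q'+\frac{m_2N_1+r'N_2}{N}\bigr)^s}.
\]
Both are alternating Hurwitz-type sums $\sum_q(-1)^q F(x+q)$, where $F$ is a signed sum over finite sets of offsets. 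The offsets $\frac{aN_2+bN_1}{N}$, with $a,b\ge0$, all lie in the same semigroup. What has to be checked is that the two finite signed multisets of offsets agree modulo shifts by integers, with the sign changing by $(-1)^{\text{shift}}$. I would verify this by reorganizing both $Z$ and $W$ as a single double sum over all $a,b\ge0$:
\[
Z(s)=W(s)=\sum_{a,b\ge0}\frac{(-1)^{a+b}}{\bigl(x+\frac{aN_2+bN_1}{N}\bigr)^s}=\zeta_{E,2}\Bigl(s,x,\bigl(\tfrac1{N_1},\tfrac1{N_2}\bigr)\Bigr).
\]
For $Z$ this would mean extending $m_1$ beyond $N_1-1$ by periodicity. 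Concretely, the term with index $(m_1+kN_1,n)$ has offset $\frac{m_1}{N_1}+k+\frac{n}{N_2}$, which equals the offset of $(m_1,n+kN_2)$, and the parity of $kN_1$ matches the parity of $kN_2$ because both $N_1$ and $N_2$ are odd.

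So the cleaner route is to start directly from $\zeta_{E,2}(s,x,(1/N_1,1/N_2))$ and decompose it in two ways: write $a=m_1+kN_1$ and absorb $k$ into $b$ via $b\mapsto b+kN_2$, or do the same with the roles exchanged. The main obstacle is that this regrouping of a double alternating series is not absolutely convergent. I would therefore justify it for $\operatorname{Re}(s)$ large, where the sums are absolutely convergent. Note that the map $(m_1,k,n)\mapsto(a,b)$ is not a bijection but a counting with multiplicity, so I would handle this by using generating functions instead. In the Mellin representation $\frac{1}{\Gamma(s)}\int_0^\infty t^{s-1}e^{-xt}G(t)\,dt$, $Z$ has
\[
G(t)=\sum_{m_1=0}^{N_1-1}(-1)^{m_1}e^{-m_1t/N_1}\cdot\frac{1}{1+e^{-t/N_2}}=\frac{1+e^{-t}}{(1+e^{-t/N_1})(1+e^{-t/N_2})},
\]
where the numerator uses that $N_1$ is odd. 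This expression is symmetric in $N_1$ and $N_2$, so $Z(s)=W(s)$, and after analytic continuation $Z'(0)=W'(0)$, which proves the theorem.
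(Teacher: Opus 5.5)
Your final argument is correct, and its overall structure matches the paper's proof. Write $Z,W$ for your two double sums. Then $N_2^{-s}Z(s)$ and $N_2^{-s}W(s)$ are exactly the two sides of the identity $\sum_{m_1}(-1)^{m_1}\zeta_E\bigl(s,N_2x+\frac{N_2}{N_1}m_1\bigr)=(N_1/N_2)^s\sum_{m_2}(-1)^{m_2}\zeta_E\bigl(s,N_1x+\frac{N_1}{N_2}m_2\bigr)$. The paper proves this identity and then differentiates it at $s=0$. The factor $\sqrt{N_1/N_2}$ comes from $\zeta_E(0,\cdot)=1/2$ together with the fact that an odd-length alternating sum of ones equals $1$; you absorb this step up front through Example~\ref{exam-2} instead. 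The real difference is how $Z=W$ is proved. The paper uses the bijective reindexings $i=N_2j+m_2$ and $k=N_1j+m_1$, which rely on $N_1,N_2$ being odd. You instead compute the Mellin kernel $G(t)=(1+e^{-t})/\bigl((1+e^{-t/N_1})(1+e^{-t/N_2})\bigr)$ and read off its symmetry in $N_1,N_2$. That is a tidy, bookkeeping-free verification: it is valid for $\operatorname{Re}(s)>1$, where the interchange of sum and integral is justified, and then holds everywhere by analytic continuation.

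Two intermediate claims in your write-up are false, though, and should be deleted.

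First, the split in $Z$ must be $n=qN_2+r$ with $0\le r<N_2$, using that $N_2$ is odd. With your choice $n=qN_1+r$, the offset is $x+qN_1/N_2+\cdots$, not $x+q+\cdots$. Likewise, in $W$ the infinite index should be split modulo $N_1$. With the correct splits, $Z$ and $W$ both become literally the same sum $\sum_{q\ge0}\sum_{a=0}^{N_1-1}\sum_{b=0}^{N_2-1}(-1)^{q+a+b}(x+q+a/N_1+b/N_2)^{-s}$. That already finishes the proof, and it is precisely the paper's argument.

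Second, $Z$ is not equal to $\zeta_{E,2}(s,x,(1/N_1,1/N_2))$. That function has kernel $1/\bigl((1+e^{-t/N_1})(1+e^{-t/N_2})\bigr)$. Your own computation therefore shows $Z(s)=\zeta_{E,2}(s,x,(1/N_1,1/N_2))+\zeta_{E,2}(s,x+1,(1/N_1,1/N_2))$. Already for $N_1=N_2=1$ one has $Z=\zeta_E(s,x)\neq\zeta_{E,2}(s,x)$. So the proposed route of decomposing $\zeta_{E,2}$ in two ways would not compute $Z$. Your closing Mellin paragraph does not depend on either of these passages, so the proof stands once they are removed.
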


Now we get the multiplication formula of Gauss--Legendre type as the special case of Theorem \ref{thm-g-Ga-Le}, with $N_2=1.$

\begin{example}[Multiplication formula of Gauss--Legendre type]\label{cor-exam4}
$$
\prod_{n=0}^\infty(n+N_1x)^{(-1)^{n+1}}=N_1^{-\frac12}\prod_{r=0}^{N_1-1}\bar\Gamma_1\left(x+\frac{r}{N_1}\right)^{(-1)^r}\quad (N_1\text{ odd}).
$$
\end{example}

For example, by (\ref{bGa}), the triplication formula
$$
\bar\Gamma_1(3x)=\frac{1}{\sqrt3}\bar\Gamma_1(x)\bar\Gamma_1\left(x+\frac{1}{3}\right)^{-1}\bar\Gamma_1\left(x+\frac{2}{3}\right)
$$
can be immediately derived from Example \ref{cor-exam4} by using Proposition \ref{gen-lerch2}, with $N=1.$

\subsection{An analogue of Shintani's double sine function}\label{subsec:double-sine}
We now turn to a different but equally fascinating aspect of the theory: the introduction and study of an alternating analogue of Shintani's double sine function. This function, denoted $\mathcal C_2(x,(1,\tau))$ and defined via the normalized multiple function in \eqref{ru1-ome-nor}, exhibits a rich arithmetic structure that parallels its classical counterpart. 

Our first result, Proposition~\ref{exam-5}, evaluates the function at the base point $x=1$ as $1/\sqrt{\tau}$, from which we immediately deduce the algebraicity of this special value for algebraic $\tau$ (see Theorem~\ref{thm-alg1}). For integer arguments $m\ge 2$, we provide an explicit product representation involving tangent functions (see Proposition~\ref{thm-c-1}), which serves as the key tool for establishing the following clean dichotomy (see Theorem~\ref{thm-alg2}): for algebraic $\tau$, the value $\mathcal C_2(m,(1,\tau))$ is algebraic precisely when $\tau$ is rational, and transcendental when $\tau$ is an irrational algebraic number. This mirrors the classical results of Kurokawa and Wakayama for Shintani's double sine function. We further obtain a parallel product representation for $\mathcal C_2(m\tau,(1,\tau))$ (see Proposition~\ref{m-tau}) and a distribution relation under odd integer scalings (see Theorem~\ref{thm-c-2}), the latter of which encodes the functional equations satisfied by this alternating double sine-type function.

\begin{proposition}\label{exam-5}
$$
\mathcal C_2(1,(1,\tau))=\frac{1}{\sqrt\tau}.
$$
\end{proposition}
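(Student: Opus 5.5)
The plan is to avoid evaluating either factor of $\mathcal C_2(1,(1,\tau))$ separately and to use a two-term difference relation for $\zeta_{E,2}$ instead, so that the unknown double-gamma values cancel. By \eqref{ru1-ome-nor} with $N=2$ and $(\omega_1,\omega_2)=(1,\tau)$, we have $\omega_1+\omega_2-x=\tau$ at $x=1$, so
\[
\mathcal C_2(1,(1,\tau))=\frac{\bar\Gamma_2(\tau,(1,\tau))}{\bar\Gamma_2(1,(1,\tau))}.
\]
I would work throughout with $\tau>0$ and $x>0$, so that every argument stays in the region where \eqref{Barnes-E} and \eqref{ru1-ome} apply, and then extend by analytic continuation in $\tau$ if needed.

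The key step is the functional relation obtained by shifting one summation index in \eqref{Barnes-E}. For $\operatorname{Re}(s)$ large, split the sum over $m_1$ into $m_1=0$ and $m_1\ge1$, then reindex $m_1\mapsto m_1+1$ in the second part. The sign flips, and this gives
\[
\zeta_{E,2}(s,x,(1,\tau))+\zeta_{E,2}(s,x+1,(1,\tau))=\sum_{m_2\ge0}\frac{(-1)^{m_2}}{(x+m_2\tau)^s}=\zeta_{E,1}(s,x,\tau).
\]
The same argument in the index $m_2$ gives
\[
\zeta_{E,2}(s,x,(1,\tau))+\zeta_{E,2}(s,x+\tau,(1,\tau))=\zeta_{E,1}(s,x,1).
\]
Both identities persist by analytic continuation to $s=0$. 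Differentiating at $s=0$ and exponentiating then yields
\[
\bar\Gamma_2(x,(1,\tau))\,\bar\Gamma_2(x+1,(1,\tau))=\bar\Gamma_1(x,\tau),\qquad
\bar\Gamma_2(x,(1,\tau))\,\bar\Gamma_2(x+\tau,(1,\tau))=\bar\Gamma_1(x,1).
\]

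Next, apply the first relation at $x=\tau$ and the second at $x=1$. Both products contain the common factor $\bar\Gamma_2(1+\tau,(1,\tau))$. Dividing one by the other cancels it and gives
\[
\frac{\bar\Gamma_2(\tau,(1,\tau))}{\bar\Gamma_2(1,(1,\tau))}=\frac{\bar\Gamma_1(\tau,\tau)}{\bar\Gamma_1(1,1)}.
\]
Finally, Proposition~\ref{proposition2} with $x=\omega_1=\omega$ gives
\[
\bar\Gamma_1(\omega,\omega)=\frac{\Gamma(1/2)}{\sqrt{2\omega}\,\Gamma(1)}=\sqrt{\frac{\pi}{2\omega}}.
\]
This is consistent with \eqref{pro-an-o} and Corollary~\ref{Le-type}. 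Hence the ratio equals $\sqrt{\pi/(2\tau)}\big/\sqrt{\pi/2}=1/\sqrt\tau$, which proves the proposition.

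The main obstacle I expect is justifying the shift identities on the analytically continued functions. They hold termwise for $\operatorname{Re}(s)>2$ by absolute convergence, and both sides are holomorphic near $s=0$ by the continuation stated after \eqref{Barnes-E}. Uniqueness of analytic continuation should therefore suffice. I would also check that the choice of branch for $\sqrt\tau$ matches the one in Proposition~\ref{proposition2} when $\tau$ is complex with $\operatorname{Re}(\tau)>0$.
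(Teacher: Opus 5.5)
Your proof is correct and is essentially the paper's argument. The paper expands both factors of $\mathcal C_2(1,(1,\tau))$ as regularized products and peels off the boundary rows $m_1=0$ and $m_2=0$. It then cancels the common interior product $\prod_{m_1,m_2\geq1}(m_1+m_2\tau)^{(-1)^{m_1+m_2}}=\bar\Gamma_2(1+\tau,(1,\tau))^{-1}$ and evaluates the remaining quotient $\prod_{m\geq1}(m\tau)^{(-1)^m}\big/\prod_{m\geq1}m^{(-1)^m}$ by \eqref{pro-an-o}.

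Your two shift identities for $\zeta_{E,2}$ perform exactly this peeling at the level of the zeta functions, and your $\bar\Gamma_1(\tau,\tau)$ and $\bar\Gamma_1(1,1)$ are the paper's two boundary products. The mild advantage of your version is that the splitting is justified by analytic continuation rather than by formal manipulation of regularized products.
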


 \begin{theorem}\label{thm-alg1}
 Let $\tau$ be an algebraic number. Then
$$
                     \mathcal C_2(1,(1,\tau))
$$
is algebraic. 
\end{theorem}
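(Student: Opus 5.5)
The plan is to derive Theorem~\ref{thm-alg1} directly from Proposition~\ref{exam-5}, which gives the closed form $\mathcal C_2(1,(1,\tau))=1/\sqrt{\tau}$. After that, the statement is purely algebraic: if $\tau$ is algebraic, so is $1/\sqrt{\tau}$. The reason is that $\overline{\mathbb Q}$ is a field that is algebraically closed in $\mathbb C$.

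The steps are as follows. First, assume $\tau\neq0$, which the definition of $\mathcal C_2(x,(1,\tau))$ already forces, since it needs $\operatorname{Re}(\tau)>0$ or at least $m_1+m_2\tau+x\neq0$. If $\tau\in\overline{\mathbb Q}$ has minimal polynomial $P(X)$, then $\alpha=\sqrt{\tau}$, for whichever branch is used in Proposition~\ref{exam-5}, satisfies $P(\alpha^2)=0$. Hence $\alpha$ is algebraic. Since $\alpha\neq0$ and $\overline{\mathbb Q}$ is a field, $1/\alpha$ is algebraic as well. Concretely, $1/\alpha$ is a root of $X^{2d}P(1/X^2)$, where $d=\deg P$. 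Combining this with Proposition~\ref{exam-5} gives the claim.

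I expect no real obstacle, because all the analytic content sits in Proposition~\ref{exam-5}. The one point to check is that the branch of $\sqrt{\tau}$ in Proposition~\ref{exam-5} is consistent with the one coming from $\tau^{-\zeta_{E,2}(0,\cdot)}$ via the principal logarithm. Any choice of square root is algebraic, so this does not affect the conclusion. Under the projective convention of Remark~\ref{rem-proj}, the degenerate case $\tau=0$ (value $\infty$) would count as algebraic too, so the statement holds in every case.
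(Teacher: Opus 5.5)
Your proposal is correct and matches the paper's proof: invoke Proposition~\ref{exam-5} to get $\mathcal C_2(1,(1,\tau))=1/\sqrt{\tau}$, then use that square roots and reciprocals of nonzero algebraic numbers are algebraic. Your explicit polynomial $X^{2d}P(1/X^2)$ is a slightly more careful version of the paper's remark that $\sqrt{\tau}$ is a root of $x^2-\tau$.
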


\begin{proposition}\label{thm-c-1}
For each integer $m\geq2,$ we have
$$
\mathcal C_2(m,(1,\tau))=\sqrt{\tau}^{(-1)^m}\prod_{k=1}^{m-1}\left(\tan\left(\frac{k\pi }{2\tau}\right)\right)^{(-1)^{m+k-1}}.
$$
\end{proposition}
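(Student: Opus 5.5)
The plan is to derive a first-order functional equation for $\mathcal C_2(x,(1,\tau))$ under the shift $x\mapsto x+1$, and then iterate it starting from the base value $\mathcal C_2(1,(1,\tau))=1/\sqrt\tau$ of Proposition~\ref{exam-5}. Throughout, abbreviate $\bar\Gamma_2(x)=\bar\Gamma_2(x,(1,\tau))$.

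\textbf{Step 1: a shift relation for $\bar\Gamma_2$.} I would first establish
\[
\bar\Gamma_2(x+1,(1,\tau))=\bar\Gamma_2(x,(1,\tau))^{-1}\,\bar\Gamma_1(x,\tau).
\]
This is the $(1,\tau)$-analogue of the recurrence quoted from \cite[Lemma 2.1 (1)]{arXiv}. For $\operatorname{Re}(s)>0$, split off the $m_1$-summation in \eqref{Barnes-E}. Adding the series for $x$ and $x+1$ telescopes in $m_1$ because of the sign $(-1)^{m_1}$, leaving only the $m_1=0$ terms:
\[
\zeta_{E,2}(s,x,(1,\tau))+\zeta_{E,2}(s,x+1,(1,\tau))=\zeta_{E,1}(s,x,\tau).
\]
By analytic continuation this identity holds at $s=0$. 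Differentiating at $s=0$ and using \eqref{ru1-ome} gives the claimed relation.

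\textbf{Step 2: the functional equation for $\mathcal C_2$.} By \eqref{ru1-ome-nor} with $N=2$,
\[
\mathcal C_2(x)=\bar\Gamma_2(x)^{-1}\,\bar\Gamma_2(1+\tau-x).
\]
Write $\mathcal C_2(x+1)=\bar\Gamma_2(x+1)^{-1}\bar\Gamma_2(\tau-x)$ and treat the two factors separately.
\begin{itemize}
\item Step 1 gives $\bar\Gamma_2(x+1)^{-1}=\bar\Gamma_2(x)\,\bar\Gamma_1(x,\tau)^{-1}$.
\item Step 1 applied at $\tau-x$ gives $\bar\Gamma_2(\tau-x)=\bar\Gamma_2(1+\tau-x)^{-1}\bar\Gamma_1(\tau-x,\tau)$.
\end{itemize}
Multiplying these, the $\bar\Gamma_2$ factors assemble into $\mathcal C_2(x)^{-1}$. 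The remaining quotient is $\bar\Gamma_1(\tau-x,\tau)/\bar\Gamma_1(x,\tau)=\mathcal C_1(x,\tau)^{-1}$, which equals $\tan(\pi x/(2\tau))$ by \eqref{nor-1}. Hence
\[
\mathcal C_2(x+1,(1,\tau))=\mathcal C_2(x,(1,\tau))^{-1}\tan\!\left(\frac{\pi x}{2\tau}\right).
\]

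\textbf{Step 3: induction on $m$.} Start from $\mathcal C_2(1)=\tau^{-1/2}$ and apply Step 2 with $x=m-1$. The inductive step reduces to the exponent bookkeeping
\[
\bigl((-1)^{m-1+k-1}\bigr)\cdot(-1)=(-1)^{m+k-1}
\]
for $k\le m-2$. The new factor $k=m-1$ enters with exponent $+1=(-1)^{2m-2}$, and the power of $\sqrt\tau$ flips from $(-1)^{m-1}$ to $(-1)^m$. As a check, $m=2$ gives $\sqrt\tau\,\tan(\pi/(2\tau))$, which matches the statement.

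\textbf{Expected obstacle.} The main difficulty is making Step 1 rigorous for general (possibly complex) $\tau$. This requires the analytic continuation of $\zeta_{E,2}(s,x,(1,\tau))$ to $s=0$ and the validity of the shift identity after continuation in $s$, $x$ and $\tau$. I would prove the identity for real $\tau>0$ and $\operatorname{Re}(s)$ large, where the series converge absolutely once paired appropriately, and then extend by the identity theorem.

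A secondary point is the case where $\tan(k\pi/(2\tau))$ is $0$ or $\infty$ (for example, $\tau$ rational). There the identity should be read in $\mathbb P^1$, following the projective convention adopted in the introduction, or obtained by continuity in $\tau$.
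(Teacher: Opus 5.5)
Your proposal is correct and follows the paper's proof. The paper also establishes $\mathcal C_2(x,(1,\tau))\,\mathcal C_2(x+1,(1,\tau))=\tan\left(\frac{\pi x}{2\tau}\right)$ (Lemma~\ref{lem-1}) and iterates it from the base value $\mathcal C_2(1,(1,\tau))=1/\sqrt{\tau}$ of Proposition~\ref{exam-5}. The paper derives the functional equation by splitting off boundary rows of the regularized products and invoking \eqref{tan-cot}, whereas you derive it from the zeta-level identity $\zeta_{E,2}(s,x,(1,\tau))+\zeta_{E,2}(s,x+1,(1,\tau))=\zeta_{E,1}(s,x,\tau)$ together with \eqref{nor-1}; this is a cleaner justification of the same cancellation rather than a different route.
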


 \begin{theorem}\label{thm-alg2}
Let $m \ge 2$ be an integer and $\tau$ an algebraic number.
\begin{itemize}
\item[(1)] If $\tau \in \mathbb{Q}$, then $\mathcal C_2(m,(1,\tau))$ is algebraic in the projective sense (see the convention in the introduction); equivalently, it is a finite algebraic number or the point at infinity.
\item[(2)] If $\tau \in \overline{\mathbb{Q}} \setminus \mathbb{Q}$, then $\mathcal C_2(m,(1,\tau))$ is transcendental.
\end{itemize}
\end{theorem}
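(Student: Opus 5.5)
The plan is to work entirely from the explicit product representation in Proposition~\ref{thm-c-1}:
\[
\mathcal C_2(m,(1,\tau))=\sqrt{\tau}^{\,(-1)^m}\prod_{k=1}^{m-1}\left(\tan\left(\frac{k\pi}{2\tau}\right)\right)^{(-1)^{m+k-1}}.
\]
The factor $\sqrt{\tau}^{\pm1}$ is algebraic whenever $\tau$ is algebraic, so everything reduces to the arithmetic nature of the numbers $\tan(k\pi/(2\tau))$ for $1\le k\le m-1$.

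\textbf{Rational case (1).} Write $\tau=p/q$. Then $k\pi/(2\tau)=kq\pi/(2p)$ is a rational multiple of $\pi$. For such angles, $e^{i\theta}$ is a root of unity, so $\tan\theta=-i(e^{2i\theta}-1)/(e^{2i\theta}+1)$ is either algebraic or equal to $\infty$; the latter happens exactly when $e^{2i\theta}=-1$, i.e.\ $\theta\in\pi/2+\pi\mathbb Z$. The product of finitely many points of $\mathbb P^1(\overline{\mathbb Q})$ with exponents $\pm1$ should then lie in $\mathbb P^1(\overline{\mathbb Q})$. The one subtlety is an indeterminate form $0\cdot\infty$, which can occur if some factor is $0$ and another is $\infty$. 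Here I would follow the projective convention of Remark~\ref{rem-proj} and read the value as the value of the meromorphic function $\mathcal C_2(m,(1,\tau))$ in $\tau$ (a limit, or equivalently a product of algebraic ratios of sines and cosines). In that reading it is still an algebraic point, since each factor $\tan\theta=\sin\theta/\cos\theta$ has algebraic numerator and denominator.

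\textbf{Irrational algebraic case (2).} Set $\alpha=1/\tau$, which is algebraic and irrational, and $\theta_k=k\pi\alpha/2$. First I check that every factor is finite and nonzero: $\theta_k\in(\pi/2)\mathbb Z$ would force $k\alpha\in\mathbb Z$, contradicting irrationality. So $\mathcal C_2(m,(1,\tau))$ is a finite nonzero complex number. Next, with $w=e^{\pi i\alpha}$, I can write $\tan\theta_k=-i(w^k-1)/(w^k+1)$. The whole value is then
\[
\sqrt{\tau}^{\pm1}\, i^{\epsilon}\, R(w),
\qquad
R(X)=\prod_{k=1}^{m-1}\left(\frac{X^k-1}{X^k+1}\right)^{\pm1}\in\mathbb Q(X),
\]
for some integer $\epsilon$. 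By Gelfond--Schneider, $w=e^{i\pi\alpha}=(-1)^{\alpha}$ is transcendental, since $\alpha$ is algebraic and irrational. Therefore, if $R$ is nonconstant, $R(w)$ is transcendental, because any algebraic relation $R(w)=\beta$ with $\beta\in\overline{\mathbb Q}$ would make $w$ algebraic. Multiplying by the nonzero algebraic number $\sqrt{\tau}^{\pm1}i^\epsilon$ preserves transcendence.

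\textbf{Main obstacle.} The step that needs real care is showing $R$ is nonconstant, since cancellations between factors with opposite exponents are conceivable. I plan to look at the zeros and poles near $X=1$. The factor with $k$ vanishes at $X=1$ for every $k$, so the order of $R$ at $X=1$ equals the alternating sum of the exponents $(-1)^{m+k-1}$ over $k=1,\dots,m-1$. This sum is $\pm1$ when $m-1$ is odd, which already gives nonconstancy. When $m-1$ is even the sum is $0$, and I would instead take the largest index $k=m-1$. A primitive $2(m-1)$-th root of unity $\zeta$ satisfies $\zeta^{m-1}=-1$, so it is a pole or zero of the $k=m-1$ factor. I then need $\zeta^k\neq\pm1$ for all $k<m-1$, which holds because $\zeta$ has order $2(m-1)$. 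Hence $\zeta$ is a zero or pole of $R$, and $R$ is nonconstant. This completes the dichotomy.
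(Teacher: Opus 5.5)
Your proof is correct and is essentially the paper's. Part (1) rests on the algebraicity of $\tan$ at rational multiples of $\pi$, viewed in $\mathbb{P}^1(\overline{\mathbb{Q}})$. Part (2) applies Gelfond--Schneider to $e^{i\pi/\tau}$ (the paper uses $q=e^{i\pi/(2\tau)}$ and $X^{2k}$) and writes the value as a nonzero algebraic multiple of a rational function of that number. Nonconstancy is shown exactly as in the paper, via a root of unity $\zeta$ with $\zeta^{m-1}=-1$ at which only the $k=m-1$ factor degenerates; this works for every $m\ge2$, so your order count at $X=1$ is unnecessary.

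Your attention to $0\cdot\infty$ in (1) is welcome: the paper skips it, and it genuinely occurs. For example, with $m=5$, $\tau=2$, both $\tan(\pi/2)$ and $\tan(\pi)$ appear with exponent $+1$. However, the sine/cosine reading gives $0/0$ there, so it is not equivalent to the $\tau$-limit. The $\tau$-limit is algebraic, but not merely because $\sin$ and $\cos$ are algebraic: the powers of $\pi$ produced by $\frac{d}{d\tau}\frac{k\pi}{2\tau}$ cancel when the net order vanishes. Note also that this limit can differ from the value obtained by continuing in $x$ at fixed $\tau$ (here $\sqrt2$ versus $1/\sqrt2$), though both values are algebraic.
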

 
 Similarly to Proposition \ref{thm-c-1} we obtain a representation for $\mathcal C_2(m\tau,(1,\tau)).$
 
 \begin{proposition}\label{m-tau}
For each integer $m\geq2,$ we have
$$
\mathcal C_2(m\tau,(1,\tau))=\sqrt{\tau}^{(-1)^{m-1}}\prod_{k=1}^{m-1}\left(\tan\left(\frac{k\pi\tau }{2}\right)\right)^{(-1)^{m+k-1}}.
$$
\end{proposition}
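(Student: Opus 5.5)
The plan is to reduce the statement to Proposition~\ref{thm-c-1} with $\tau$ replaced by $1/\tau$. This uses two structural facts about $\bar\Gamma_2(x,(\omega_1,\omega_2))$: it is symmetric in $(\omega_1,\omega_2)$, and it transforms simply under the simultaneous scaling $(x,\omega_1,\omega_2)\mapsto(x/\tau,\omega_1/\tau,\omega_2/\tau)$. Symmetry is immediate from the definition \eqref{Barnes-E}, since the double sum is invariant under swapping $(m_1,\omega_1)\leftrightarrow(m_2,\omega_2)$. Hence $\bar\Gamma_2$, and therefore $\mathcal C_2$, are unchanged when the two periods are interchanged.

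For the scaling, note that for $\operatorname{Re}(s)>0$ the definition gives
\[
\zeta_{E,2}(s,x,(1,\tau))=\tau^{-s}\,\zeta_{E,2}\bigl(s,x/\tau,(1/\tau,1)\bigr).
\]
By analytic continuation this holds for all $s$. Differentiating at $s=0$ yields
\[
\log\bar\Gamma_2(x,(1,\tau))=-\log\tau\cdot\zeta_{E,2}\bigl(0,x/\tau,(1/\tau,1)\bigr)+\log\bar\Gamma_2\bigl(x/\tau,(1/\tau,1)\bigr),
\]
which is the same mechanism as in Example~\ref{exam-2}. I would then compute $\zeta_{E,2}(0,y,(\omega_1,\omega_2))$ from the Mellin representation
\[
\zeta_{E,2}(s,y,(\omega_1,\omega_2))=\frac{1}{\Gamma(s)}\int_0^\infty t^{s-1}\frac{e^{-yt}}{(1+e^{-\omega_1t})(1+e^{-\omega_2t})}\,\d t.
\]
The integrand's kernel is holomorphic at $t=0$ with value $1/4$, so the standard argument gives $\zeta_{E,2}(0,y,(\omega_1,\omega_2))=1/4$ for every $y$. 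Consequently
\[
\bar\Gamma_2(x,(1,\tau))=\tau^{-1/4}\,\bar\Gamma_2\bigl(x/\tau,(1/\tau,1)\bigr).
\]

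Next I insert this into \eqref{ru1-ome-nor} with $N=2$:
\[
\mathcal C_2(x,(1,\tau))=\bar\Gamma_2(x,(1,\tau))^{-1}\,\bar\Gamma_2(1+\tau-x,(1,\tau)).
\]
The two factors $\tau^{-1/4}$ appear with exponents $-1$ and $+1$, so they cancel. Using $(1+\tau-x)/\tau=1/\tau+1-x/\tau$ and the symmetry in the periods, this gives
\[
\mathcal C_2(x,(1,\tau))=\mathcal C_2\bigl(x/\tau,(1,1/\tau)\bigr).
\]
Taking $x=m\tau$ gives $\mathcal C_2(m\tau,(1,\tau))=\mathcal C_2(m,(1,1/\tau))$.

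Proposition~\ref{thm-c-1} with $\tau$ replaced by $1/\tau$ then produces
\[
\sqrt{1/\tau}^{\,(-1)^m}\prod_{k=1}^{m-1}\tan\Bigl(\frac{k\pi\tau}{2}\Bigr)^{(-1)^{m+k-1}},
\]
and $\sqrt{1/\tau}^{\,(-1)^m}=\sqrt\tau^{\,(-1)^{m-1}}$, which is the claimed formula.

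The main obstacle I expect is justifying the steps for the admissible range of $\tau$. The computation $\zeta_{E,2}(0,y)=1/4$ and the scaling identity are clean for $\tau>0$. The branch of $\tau^{-s}$ and the validity of Proposition~\ref{thm-c-1} at $1/\tau$ (the positivity condition $m_1+m_2/\tau+y>0$) need care, and would be extended to complex $\tau$ by analytic continuation in the parameters, as remarked after \eqref{tGa-bGa}. If Proposition~\ref{thm-c-1} is only available for restricted $\tau$, I would instead redo its proof directly with the roles of $1$ and $\tau$ interchanged, via the recurrence in $x$ and \eqref{tan-cot}.
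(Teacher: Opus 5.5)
Your argument is correct, but it takes a different route from the paper.

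\textbf{The paper's route.} It repeats the proof of Proposition~\ref{thm-c-1} with the two periods interchanged. It computes the base value $\mathcal C_2(\tau,(1,\tau))=\sqrt\tau$ directly from the regularized products and \eqref{pro-an-o}. It then derives the shift relation $\mathcal C_2(x,(1,\tau))\,\mathcal C_2(x+\tau,(1,\tau))=\tan(\pi x/2)$ from \eqref{tan-cot}. Finally it telescopes along $x=\tau,2\tau,\dots,(m-1)\tau$.

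\textbf{Your route.} You prove the identity $\mathcal C_2(x,(1,\tau))=\mathcal C_2(x/\tau,(1,1/\tau))$ for all $x$. The factors $\tau^{\mp 1/4}$ coming from $\zeta_{E,2}(0,\cdot)=1/4$ cancel, because $\bar\Gamma_2$ enters $\mathcal C_2$ with exponents $-1$ and $+1$. The statement is then literally Proposition~\ref{thm-c-1} at the parameter $1/\tau$; the exponent and the tangent arguments match, since $\sqrt{1/\tau}^{\,(-1)^m}=\sqrt\tau^{\,(-1)^{m-1}}$ and $\tan(k\pi/(2\tau^{-1}))=\tan(k\pi\tau/2)$.

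\textbf{What your route buys.} It explains why the formula is ``dual'' to Proposition~\ref{thm-c-1}, and it needs no separate base-value computation or new recurrence. It also generalizes: since $\zeta_{E,N}(0,\cdot)=2^{-N}$ and $(-1)^{N-1}+(-1)^N=0$, every $\mathcal C_N$ is invariant under simultaneous scaling of $x$ and the periods. As a bonus, the dichotomy of Theorem~\ref{thm-alg2} transfers immediately to the values $\mathcal C_2(m\tau,(1,\tau))$.

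\textbf{What the paper's route buys.} It is self-contained. It uses only the one-variable identity \eqref{tan-cot} and needs neither the value $\zeta_{E,2}(0,\cdot)=1/4$ nor the scaling and symmetry properties of $\bar\Gamma_2$.

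The caveats you raise affect both proofs equally, and the paper treats them only formally. These are arguments such as $1+\tau-m\tau$ leaving the positivity region, poles of the tangent factors, and complex $\tau$.
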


\begin{remark}\label{rem-proj}
We have established that the special values of $\mathcal C_2(x,(1,\tau))$ exhibit a dichotomy similar to that of Shintani's double sine function: algebraicity in the rational case and transcendence in the irrational algebraic case.
\end{remark}

\begin{theorem}\label{thm-c-2}
Assume that $N_1$ and $N_2$  are odd integers. Then we have
$$
\mathcal C_2(x,(N_1,N_2))=\prod_{k_1=0}^{N_2-1}\prod_{k_2=0}^{N_1-1}
\mathcal C_2\left(\frac{x+N_1k_1+N_2k_2}{N_1N_2}\right)^{(-1)^{k_1+k_2}}.
$$
\end{theorem}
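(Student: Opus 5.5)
The plan is to prove the identity at the level of the multiple alternating zeta function, then differentiate at $s=0$ and assemble the two factors in the definition \eqref{ru1-ome-nor} of $\mathcal C_2$. Throughout, $\mathcal C_2(y)=\mathcal C_2(y,(1,1))$ and $\bar\Gamma_2(y)=\bar\Gamma_2(y,(1,1))$.

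\textbf{Step 1: splitting the double series.} Work first with $\operatorname{Re}(s)$ large and use the Mellin representation
\[
\zeta_{E,2}(s,x,(N_1,N_2))=\frac{1}{\Gamma(s)}\int_0^\infty t^{s-1}\frac{e^{-xt}}{(1+e^{-N_1t})(1+e^{-N_2t})}\,\d t .
\]
This avoids rearranging a conditionally convergent double series. Since $N_2$ is odd, the finite geometric sum gives
\[
\frac{1}{1+e^{-N_1t}}=\frac{\sum_{k_1=0}^{N_2-1}(-1)^{k_1}e^{-k_1N_1t}}{1+e^{-N_1N_2t}},
\]
and symmetrically for $1/(1+e^{-N_2t})$ with $0\le k_2\le N_1-1$, using that $N_1$ is odd. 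Substitute both expansions and change variables $t\mapsto t/(N_1N_2)$. This yields
\[
\zeta_{E,2}(s,x,(N_1,N_2))=(N_1N_2)^{-s}\sum_{k_1=0}^{N_2-1}\sum_{k_2=0}^{N_1-1}(-1)^{k_1+k_2}\,\zeta_{E,2}(s,y_{k}),
\qquad y_k=\frac{x+N_1k_1+N_2k_2}{N_1N_2}.
\]
Both sides continue analytically, so this holds for all $s$. Equivalently, this is the reindexing $m_1=k_1+N_2j_1$, $m_2=k_2+N_1j_2$, in which oddness of $N_1,N_2$ gives $(-1)^{m_1+m_2}=(-1)^{k_1+k_2+j_1+j_2}$.

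\textbf{Step 2: differentiating at $s=0$.} Differentiation gives
\[
\log\bar\Gamma_2(x,(N_1,N_2))=\sum_{k}(-1)^{k_1+k_2}\bigl(\log\bar\Gamma_2(y_k)-\zeta_{E,2}(0,y_k)\log(N_1N_2)\bigr).
\]
To evaluate $\zeta_{E,2}(0,y)$, use the Mellin representation again. The value at $s=0$ is the constant term at $t=0$ of $e^{-yt}/(1+e^{-t})^2$, which equals $1/4$ independently of $y$. This is the analogue of $\zeta_E(0,x)=1/2$.

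\textbf{Step 3: the reflected argument.} Apply Step 2 with $x$ replaced by $N_1+N_2-x$. The new points are $(N_1+N_2-x+N_1k_1+N_2k_2)/(N_1N_2)=2-y_{k'}$, where $k_1'=N_2-1-k_1$ and $k_2'=N_1-1-k_2$. Since $N_1+N_2$ is even, $(-1)^{k_1'+k_2'}=(-1)^{k_1+k_2}$, so the reindexing preserves the signs.

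\textbf{Step 4: assembling.} By \eqref{ru1-ome-nor} with $N=2$, $\mathcal C_2(x,(N_1,N_2))=\bar\Gamma_2(x,(N_1,N_2))^{-1}\,\bar\Gamma_2(N_1+N_2-x,(N_1,N_2))$. Combine the expansions from Steps 2 and 3. The terms involving $\log(N_1N_2)$ cancel because both carry the constant $1/4$ with the same signed multiplicity. What remains is
\[
\prod_{k}\bigl(\bar\Gamma_2(y_k)^{-1}\bar\Gamma_2(2-y_k)\bigr)^{(-1)^{k_1+k_2}}=\prod_k\mathcal C_2(y_k)^{(-1)^{k_1+k_2}},
\]
which is the claimed identity.

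The main obstacle is the rigorous justification in Step 1, namely the analytic continuation and the independence of $\zeta_{E,2}(0,y)$ from $y$. The Mellin/generating-function route handles both cleanly, so that is what I would use instead of directly rearranging the double series.
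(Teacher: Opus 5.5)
Your proposal is correct and follows the paper's proof. The paper also splits the indices by residue class, $m_1=k_1+N_2j_1$ and $m_2=k_2+N_1j_2$, to get $(N_1N_2)^s\zeta_{E,2}(s,x,(N_1,N_2))=\sum_{k}(-1)^{k_1+k_2}\zeta_{E,2}(s,y_k)$, does the same for $N_1+N_2-x$, and differentiates at $s=0$, using the value $1/4$ at $s=0$ to cancel the $\log(N_1N_2)$ terms. Your Mellin-transform justification of the splitting, your computation $\zeta_{E,2}(0,y)=f(0)=1/4$, and your reindexing $k_1'=N_2-1-k_1$, $k_2'=N_1-1-k_2$ for the reflected argument fill in steps the paper only asserts.
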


\subsection{Alternating form of Mizuno's formula}\label{subsec:mizuno-alt}

We now arrive at the culminating result of this paper: the alternating analogue of Mizuno's classical product formula. Our main theorem, Theorem~\ref{main}, establishes that for arbitrary complex parameters $x_j$ avoiding the non-positive integers, the alternating zeta-regularized product of $\prod_{j=1}^n (m+x_j)$ factorizes completely as
$$
\prod_{m=0}^{\infty}\left(\prod_{j=1}^{n}(m+x_{j})^{(-1)^{m}}\right)
=\frac{\left(\sqrt{\frac{\pi}{2}}\right)^n}{\prod_{j=1}^{n}\tilde\Gamma(x_{j})}.
$$
This formula, which we refer to as the alternating Mizuno formula, provides a complete alternating generalization of \eqref{eq:mizuno_classical}, with the classical constant $\sqrt{2\pi}$ replaced by $\sqrt{\pi/2}$ and the Euler gamma function replaced by its modified counterpart $\tilde\Gamma(x)$ (see \eqref{WH}). As an immediate corollary, setting $n=1$ recovers the alternating Lerch formula (see Equation~\eqref{L-type-o}), while the case $n=2$ yields a Lerch-type formula for the Gaussian field $\mathbb{Q}(i)$ (see Corollary~\ref{Ler-typ2}), which we explicitly compute and discuss. More generally, by exploiting the factorization $z^n-y^n=\prod_{j=0}^{n-1}(z-\zeta^j y)$, we derive a Kurokawa--Wakayama type formula for alternating products (see Corollary~\ref{co-KW type}), as well as companion formulas for sums $z^n+y^n$ with both odd and even $n$ (the subsequent corollaries). These results collectively demonstrate the remarkable versatility of our main theorem, unifying Wallis' classical product formula (see Remark~\ref{will-f}) and a host of new cyclotomic product identities under a single elegant framework.

\begin{theorem}\label{main}
For $x_{j}\in\mathbb{C}\setminus\{0,-1,-2, \ldots\},$ we have
$$
\prod_{m=0}^{\infty}\left(\prod_{j=1}^{n}(m+x_{j})^{(-1)^{m}}\right)=\frac{\left(\sqrt{\frac{\pi}{2}}\right)^n}{\prod_{j=1}^{n}\tilde\Gamma(x_{j})}
=\prod_{j=1}^{n}\left(\prod_{m=0}^{\infty}(m+x_{j})^{(-1)^{m}}\right).
$$
 \end{theorem}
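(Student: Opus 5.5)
The plan is to interpret the left-hand side as a Deninger zeta-regularized product for the multiset $\{m+x_j\}$ with multiplicities $(-1)^m$. Its associated zeta function is
\[
Z(s)=\sum_{m=0}^\infty (-1)^m \prod_{j=1}^n (m+x_j)^{-s},
\]
and the product equals $\exp(-Z'(0))$. Mizuno's classical proof splits a function $\Lambda^*(s)$ into a "diagonal" part $\Lambda^*_c(s)$ and a correction, and we follow the same scheme. Set
\[
\Lambda^*(s)=Z(s)=\sum_{m}(-1)^m\prod_j (m+x_j)^{-s},\qquad \Lambda^*_c(s)=\sum_{j=1}^n \zeta_E(ns,x_j)/n\cdot n=\sum_j\zeta_E(ns,x_j)\ \text{(suitably normalized)}.
\]
The approach is to compare $\prod_j(m+x_j)^{-s}$ with $(m+c)^{-ns}$, taking for instance $c=\frac1n\sum_j x_j$, by expanding
\[
\prod_j (m+x_j)^{-s}=(m+c)^{-ns}\prod_j\Bigl(1+\tfrac{x_j-c}{m+c}\Bigr)^{-s}
\]
in powers of $(m+c)^{-1}$. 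This gives
\[
Z(s)=\sum_{k\ge0}P_k(s)\,\zeta_E(ns+k,c),
\]
where each $P_k(s)$ is a polynomial in $s$ with $P_0=1$ and $P_k(0)=0$ for $k\ge1$. The tail $k>K$ converges absolutely and holomorphically near $s=0$, and each $\zeta_E$ is entire. Hence $Z$ continues holomorphically to a neighbourhood of $s=0$, so the regularized product is well defined.

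Next we differentiate at $s=0$. Since $P_k(0)=0$ for $k\ge1$, we get
\[
Z'(0)=n\zeta_E'(0,c)+\sum_{k\ge1}P_k'(0)\zeta_E(k,c).
\]
We do the same for $\sum_j \zeta_E(s,x_j)$ by expanding each $(m+x_j)^{-s}$ around $c$. Here $P_k'(0)$ is linear in $\log$-expansion coefficients: $\frac{d}{ds}\big|_0$ of $\prod_j(1+u_j)^{-s}$ is $-\sum_j\log(1+u_j)$. This is exactly the sum of the corresponding single-factor terms. Consequently
\[
Z'(0)=\sum_{j=1}^n\zeta_E'(0,x_j),
\]
provided the interchange of derivative and infinite sum is justified. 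The key structural point, unlike the classical case, is that $\zeta_E(s,x)$ has no pole at $s=1$. The classical proof has to account for extra terms from $\mathrm{Res}\,\zeta$ multiplied by $P_k(s)/s$-type factors. We expect those terms to vanish here automatically, because every $\zeta_E(ns+k,c)$ is regular, so no $0\cdot\infty$ contributions arise. I expect the main obstacle to be the rigorous justification: uniform convergence of the tail near $s=0$, and choosing $c$ with $\mathrm{Re}(m+c)$ large relative to $|x_j-c|$. For the latter, I would first remove finitely many initial $m$ (finite products regularize trivially and multiplicatively) and apply the expansion only for $m\ge M$.

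Finally, $\exp(-\zeta_E'(0,x_j))=\bar\Gamma_1(x_j)^{-1}=\sqrt{\pi/2}/\tilde\Gamma(x_j)$ by Proposition~\ref{pro-t-a}, which gives the middle expression. Corollary~\ref{Le-type}, i.e.\ \eqref{L-type-o}, identifies each factor with $\prod_m (m+x_j)^{(-1)^m}$, yielding the right-hand equality. For non-real $x_j$, or $x_j$ with negative real part, we use the branch of $\log$ chosen as in Mizuno's setting, together with analytic continuation in $x_j$, since both sides are analytic in $x_j\in\mathbb{C}\setminus\{0,-1,\dots\}$.
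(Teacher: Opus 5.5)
Your argument is correct in outline, but it takes a different route from the paper's.

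\textbf{The paper's route.} The paper follows Mizuno's method directly. It expands $\prod_j(1+x_j/m)^{-s}$ only to first order in $1/m$. It then subtracts the tails of $\zeta_E(ns)$ and $s\bigl(\sum_j x_j\bigr)\zeta_E(ns+1)$, and observes that the remainder is $O(m^{-2})$, hence holomorphic on $\mathrm{Re}(s)>-1/n$. Differentiating at $s=0$ and inserting $\zeta_E'(0)=\log\sqrt{\pi/2}$ and $\zeta_E(1)=\tilde\gamma_0$, it recognizes the Weierstrass--Hadamard product \eqref{WH}. So the middle expression comes out directly, and the right-hand equality is just the case $n=1$.

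\textbf{Your route.} You reverse the order. You first show there is no multiplicative anomaly, $Z'(0)=\sum_j\zeta_E'(0,x_j)$, and then import the one-variable identity $e^{\zeta_E'(0,x)}=\tilde\Gamma(x)/\sqrt{\pi/2}$ from Proposition~\ref{pro-t-a}.
\begin{itemize}
\item Your route never uses \eqref{WH} or the special values $\zeta_E'(0)$ and $\zeta_E(1)$, and it explains structurally why the regularized product splits. It does rest on Proposition~\ref{pro-t-a} from earlier work.
\item The paper's route is self-contained given \eqref{WH}, and needs only a first-order expansion.
\end{itemize}

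\textbf{A simpler way to get the additivity.} Your all-orders expansion around $c$, with its convergence control in $k$ and the choice of centre, is heavier than necessary. For $m$ large, set $f(s,m)=\prod_j(1+x_j/m)^{-s}-\frac{1}{n}\sum_j(1+x_j/m)^{-ns}$. Then $f(0,m)=\partial_s f(0,m)=0$, and $f(s,m)=O(m^{-2})$ locally uniformly in $s$. Hence $Z(s)-\frac{1}{n}\sum_j\zeta_E(ns,x_j)$ is holomorphic on $\mathrm{Re}(s)>-1/n$ with a double zero at $s=0$; the finitely many initial terms also vanish to second order there. This gives $Z'(0)=\sum_j\zeta_E'(0,x_j)$ at once.

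\textbf{On your heuristic about poles.} The absence of a pole of $\zeta_E$ at $s=1$ is not really the mechanism. In the classical setting, the only term with a pole at $s=0$ is $P_1(s)\zeta(ns+1,c)$, and $P_1$ is exactly linear in $s$. Its derivative at $0$ therefore matches the sum of the single-factor terms, which is why the classical Mizuno formula factorizes as well. What actually drives the argument is $P_k(0)=0$ together with the additivity of $P_k'(0)$ over $j$, and you identified both correctly.
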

 For $n=1$, Theorem~\ref{main} reduces to
\[
\prod_{m=0}^\infty (m+x)^{(-1)^m}
= \frac{\sqrt{\frac\pi 2}}{\tilde\Gamma(x)}
= \bar\Gamma_1(x)^{-1},
\]
which is consistent with Corollary~\ref{Le-type}.
 
For $n=2,$ letting $x_{1}=x+iy$ and $x_{2}=x-iy$ in (\ref{main}), we obtain an analogue of Lerch's formula  (\ref{Lerch2}):

\begin{corollary}[Lerch type formula in $\mathbb{Q}(i)$]\label{Ler-typ2}
$$
\begin{aligned}
\prod_{m=0}^\infty \left((m+x)^2+y^2\right)^{(-1)^{m}}&=\prod_{m=0}^\infty\left((m+x+iy)(m+x-iy)\right)^{(-1)^{m}}\\
&=\frac{\frac{\pi}{2}}{\tilde\Gamma(x+iy)\tilde\Gamma(x-iy)}.
\end{aligned}
$$
\end{corollary}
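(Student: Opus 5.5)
The statement to prove is Corollary~\ref{Ler-typ2}, which follows directly from Theorem~\ref{main} with $n=2$. The plan has three steps.

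\textbf{Step 1: specialize Theorem~\ref{main}.} I would take $n=2$, $x_1=x+iy$ and $x_2=x-iy$, assuming $x\pm iy\notin\{0,-1,-2,\ldots\}$ so that the hypothesis of Theorem~\ref{main} holds. Theorem~\ref{main} then gives
\[
\prod_{m=0}^{\infty}\left((m+x+iy)(m+x-iy)\right)^{(-1)^{m}}
=\frac{\left(\sqrt{\frac{\pi}{2}}\right)^2}{\tilde\Gamma(x+iy)\tilde\Gamma(x-iy)}
=\frac{\frac{\pi}{2}}{\tilde\Gamma(x+iy)\tilde\Gamma(x-iy)}.
\]
This is the second equality of the corollary.

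\textbf{Step 2: identify the two regularized products.} The first equality is the pointwise identity $(m+x)^2+y^2=(m+x+iy)(m+x-iy)$. The left-hand side of the corollary is, by definition, the zeta-regularized product of the sequence $a_m=(m+x)^2+y^2$ with signs $(-1)^m$. The middle expression is the same sequence written in factored form. Both are therefore $\exp\bigl(-\frac{d}{ds}\sum_m(-1)^m a_m^{-s}\big|_{s=0}\bigr)$, provided we interpret the product in Theorem~\ref{main} as the product over the single sequence $\prod_j(m+x_j)$, which is how Theorem~\ref{main} is stated.

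\textbf{Step 3: fix the branch of $a_m^{-s}$ (the only real obstacle).} We need $a_m^{-s}$ to equal $(m+x+iy)^{-s}(m+x-iy)^{-s}$ for large $m$, so that the Dirichlet series defining both sides agree. For real $x,y$ and $m$ large, $m+x\pm iy$ lie near the positive real axis. Their principal arguments are opposite and small, so the principal logarithm is additive there. Finitely many small $m$ may violate this. Those terms change the Dirichlet series only by a finite sum of functions of the form $a_m^{-s}\bigl(e^{2\pi i k s}-1\bigr)$, whose derivatives at $s=0$ are multiples of $2\pi i$. Such a change alters the regularized product by a factor of $\exp(2\pi i\,\mathbb Z)=1$. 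I would conclude by noting that branch choices are harmless after exponentiation, exactly as in the convention used for Theorem~\ref{main}.
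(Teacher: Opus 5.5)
Your proposal is correct and follows the paper's route exactly: the corollary comes from putting $n=2$, $x_1=x+iy$, $x_2=x-iy$ in Theorem~\ref{main}, together with the factorization $(m+x)^2+y^2=(m+x+iy)(m+x-iy)$. Your Step~3 on the branch of $a_m^{-s}$ is a valid detail that the paper passes over silently, and it also works for complex $x,y$: for fixed $x,y$ the principal arguments of $m+x\pm iy$ tend to $0$, so only finitely many $m$ can break additivity of the principal logarithm, and each such $m$ contributes only a factor $e^{2\pi i k}=1$.
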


This observation is remarkable from two different points of view. 
First, by setting $x>0$ and $y=0$ in Corollary \ref{Ler-typ2}, we recover (\ref{L-type-o}), since
$$
\prod_{m=0}^\infty \left(m+x\right)^{2(-1)^{m}}=\left(\prod_{m=0}^\infty (m+x)^{(-1)^{m}}\right)^2.
$$
Second, it provides a nontrivial example of the factorization property
\begin{equation}\label{pro-two}
\prod_{m=0}^\infty (a_m b_m)^{(-1)^m}=\left(\prod_{m=0}^\infty a_m^{(-1)^m}\right)\left(\prod_{m=0}^\infty b_m^{(-1)^m}\right),
\end{equation}
obtained by taking $a_m=m+x+iy$ and $b_m=m+x-iy.$
It is worth emphasizing that (\ref{pro-two}) does not hold in general; counterexamples are known (see \cite{KW}).

Let $\zeta=e^{2\pi i/n}$ be a primitive $n$th root of unity.  
Since
$$
\prod_{j=0}^{n-1}(z-\zeta^j y)=z^n-y^n,
$$
Theorem~\ref{main} immediately yields the following analogue of the Kurokawa--Wakayama formula~(\ref{KW}).

\begin{corollary}[Kurokawa--Wakayama type formula]\label{co-KW type}
Assume that
$$
x-\zeta^j y\notin\{0,-1,-2,\ldots\}\quad (j=0,1,\ldots,n-1).
$$
Then
$$
\begin{aligned}
\prod_{m=0}^\infty\left((m+x)^n-y^n\right)^{(-1)^m}
&=\prod_{m=0}^{\infty}\left(\prod_{j=0}^{n-1}(m+x-\zeta^j y)^{(-1)^m}\right) \\
&=\frac{\left(\sqrt{\frac{\pi}{2}}\right)^n}{\displaystyle\prod_{j=0}^{n-1}\tilde\Gamma(x-\zeta^j y)}.
\end{aligned}
$$
\end{corollary}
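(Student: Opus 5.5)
The statement to prove is Corollary~\ref{co-KW type}. I would treat it as a direct specialization of Theorem~\ref{main}. The only input beyond that theorem is the polynomial identity
\[
\prod_{j=0}^{n-1}(z-\zeta^j y)=z^n-y^n,
\]
applied with $z=m+x$. This gives termwise
\[
(m+x)^n-y^n=\prod_{j=0}^{n-1}(m+x-\zeta^j y).
\]
So the sequence whose alternating regularized product is on the left is literally the same as the sequence $\prod_{j}(m+x_j)$ in Theorem~\ref{main}, with the choice $x_{j+1}=x-\zeta^j y$ for $j=0,\ldots,n-1$. The hypothesis $x-\zeta^j y\notin\{0,-1,-2,\ldots\}$ is exactly the admissibility condition on the $x_j$ in Theorem~\ref{main}.

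The key steps, in order, are these.

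\textbf{Step 1.} Record the termwise factorization above, which gives the first equality. The meaning of the left-hand product is the zeta-regularized one, defined via the Dirichlet series
\[
\sum_{m\ge0}(-1)^m\bigl((m+x)^n-y^n\bigr)^{-s}.
\]
Since $\bigl((m+x)^n-y^n\bigr)^{-s}=\prod_{j}(m+x_j)^{-s}$ for $m$ large, with a fixed branch convention, this is the same zeta function the paper uses in Theorem~\ref{main}. The two sides of the first equality are therefore equal by definition.

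\textbf{Step 2.} Invoke Theorem~\ref{main} with these $x_j$. This yields
\[
\frac{\bigl(\sqrt{\pi/2}\bigr)^n}{\prod_{j=0}^{n-1}\tilde\Gamma(x-\zeta^j y)}.
\]

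\textbf{Step 3.} As a sanity check, specialize to $n=1$ and to $n=2$ with $y\mapsto iy$. These recover \eqref{L-type-o} and Corollary~\ref{Ler-typ2} respectively.

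The main obstacle is the branch issue in Step 1. The complex powers $(m+x_j)^{-s}$ multiply to $\bigl((m+x)^n-y^n\bigr)^{-s}$ only when the arguments add up without wrapping around $2\pi$. Finitely many small $m$ may violate this, but those terms contribute only an entire function of $s$ whose derivative at $0$ is a sum of logarithms. Such a term changes nothing once one exponentiates, because $\exp$ of a multiple of $2\pi i$ is $1$.

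For large $m$, each $m+x_j$ lies near the positive real axis, so the principal branches are compatible. I would simply adopt the paper's convention that the product in Theorem~\ref{main} is defined by this factored series, so that the corollary is immediate.
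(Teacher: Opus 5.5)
Your proposal is correct and matches the paper's argument: the paper also obtains the corollary by applying Theorem~\ref{main} with $x_{j+1}=x-\zeta^j y$, using the factorization $\prod_{j=0}^{n-1}(z-\zeta^j y)=z^n-y^n$ at $z=m+x$. Your branch remark is a valid justification of the first equality, which the paper leaves implicit: the finitely many $m$ where principal arguments wrap around change the derivative at $s=0$ only by integer multiples of $2\pi i$, which disappear after exponentiation.
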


Similarly, since
$$
\prod_{j=0}^{n-1}(z+\zeta^j y)=z^n+y^n
$$
for odd positive integers $n$, we obtain the following companion formula.

\begin{corollary}
Let $n$ be an odd positive integer. Assume that
$$
x+\zeta^j y\notin\{0,-1,-2,\ldots\}\quad (j=0,1,\ldots,n-1).
$$
Then
$$
\begin{aligned}
\prod_{m=0}^\infty\left((m+x)^n+y^n\right)^{(-1)^m}
&=\prod_{m=0}^{\infty}\left(\prod_{j=0}^{n-1}(m+x+\zeta^j y)^{(-1)^m}\right) \\
&=\frac{\left(\sqrt{\frac{\pi}{2}}\right)^n}{\displaystyle\prod_{j=0}^{n-1}\tilde\Gamma(x+\zeta^j y)}.
\end{aligned}
$$
\end{corollary}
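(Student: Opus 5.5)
The plan is to reduce this companion formula to the Kurokawa--Wakayama type formula of Corollary~\ref{co-KW type} by substituting $y\mapsto -y$. Equivalently, it reduces directly to Theorem~\ref{main} with the choice $x_j=x+\zeta^j y$ for $j=0,1,\ldots,n-1$. No new analytic input should be needed; the content is purely algebraic.

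\textbf{Step 1 (the factorization).} Since $\zeta=e^{2\pi i/n}$ is a primitive $n$th root of unity, $\prod_{j=0}^{n-1}(z-\zeta^j w)=z^n-w^n$ for all $z,w$. I would take $w=-y$. Because $n$ is odd, $(-y)^n=-y^n$, which gives
\[
\prod_{j=0}^{n-1}(z+\zeta^j y)=z^n+y^n .
\]
Setting $z=m+x$ then yields, for every $m\ge 0$, the pointwise identity
\[
(m+x)^n+y^n=\prod_{j=0}^{n-1}(m+x+\zeta^j y).
\]

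\textbf{Step 2 (transfer to regularized products).} The left-hand side of Theorem~\ref{main}, $\prod_{m}\bigl(\prod_{j}(m+x_j)^{(-1)^m}\bigr)$, is the zeta-regularized product attached to the sequence $\bigl(\prod_j(m+x_j)\bigr)_{m\ge0}$ with signs $(-1)^m$. Step~1 shows this sequence coincides termwise with $\bigl((m+x)^n+y^n\bigr)_{m\ge0}$. Hence the associated alternating zeta functions agree, with the same branch convention for the powers $(\cdot)^{-s}$ as in Theorem~\ref{main} and Corollary~\ref{co-KW type}. Their derivatives at $s=0$ therefore agree, and this gives the first equality.

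\textbf{Step 3 (apply the main theorem).} The hypothesis $x+\zeta^j y\notin\{0,-1,-2,\ldots\}$ is exactly the condition $x_j\in\mathbb{C}\setminus\{0,-1,-2,\ldots\}$ required by Theorem~\ref{main}. Equivalently, it is the hypothesis of Corollary~\ref{co-KW type} with $y$ replaced by $-y$. Applying either result gives the second equality, with right-hand side $(\sqrt{\pi/2})^n/\prod_{j}\tilde\Gamma(x+\zeta^j y)$.

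The only point needing care is Step~2: one must check that the regularized product on the left is understood exactly as in Theorem~\ref{main}, namely as the product of the polynomial values $f(m)=\prod_j(m+x_j)$, and not as a product of separately regularized factors. This is the same interpretation already used in Corollary~\ref{co-KW type}. Once that is fixed, the oddness of $n$ used in Step~1 is the only extra ingredient.
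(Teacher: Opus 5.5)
Your proposal is correct and is essentially the paper's own argument: the paper notes that $\prod_{j=0}^{n-1}(z+\zeta^j y)=z^n+y^n$ for odd $n$ and applies Theorem~\ref{main} with $x_j=x+\zeta^j y$, and your derivation of this factorization from $z^n-w^n$ with $w=-y$ supplies the one-line justification the paper leaves implicit. The branch point you flag in Step~2 is harmless, since the principal logarithm of $\prod_j(m+x_j)$ and $\sum_j\log(m+x_j)$ can differ only for finitely many $m$ and only by integer multiples of $2\pi i$, which leave the exponential defining the regularized product unchanged.
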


For even positive integers $n,$ let
$$
\zeta_j=e^{\frac{(2j+1)\pi i}{n}}, \quad (j=0,1,\dots,n-1),
$$
so that \(\zeta_j^n=-1\). Since
$$
z^n+y^n=\prod_{j=0}^{n-1}(z+\zeta_j y),
$$
we obtain the following analogue.
 
\begin{corollary}\label{ne-root}
Let $n$ be an even positive integer. Assume that
$$
x+\zeta_j y\notin\{0,-1,-2,\ldots\}, \quad (j=0,1,\dots,n-1).
$$
Then
$$
\begin{aligned}
\prod_{m=0}^{\infty}\left((m+x)^n+y^n\right)^{(-1)^m}
&=\prod_{m=0}^{\infty}\left(\prod_{j=0}^{n-1}(m+x+\zeta_j y)^{(-1)^m}\right)
\\
&=\frac{\left(\sqrt{\frac{\pi}{2}}\right)^n}{\displaystyle\prod_{j=0}^{n-1}\tilde{\Gamma}(x+\zeta_j y)}.
\end{aligned}
$$
\end{corollary}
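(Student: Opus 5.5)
The plan is to deduce Corollary~\ref{ne-root} directly from Theorem~\ref{main}, exactly as Corollary~\ref{co-KW type} follows from it. The only difference is that the roots of unity are replaced by the $n$th roots of $-1$.

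\emph{Step 1: the polynomial factorization.} Let $n$ be even and $\zeta_j=e^{(2j+1)\pi i/n}$ for $0\le j\le n-1$. These $n$ numbers are distinct, and each satisfies $\zeta_j^n=e^{(2j+1)\pi i}=-1$. So they are exactly the roots of $t^n+1$, which gives
\[
t^n+1=\prod_{j=0}^{n-1}(t-\zeta_j).
\]
Because $n$ is even, $t\mapsto -t$ maps this root set to itself, since $-\zeta_j$ is again a root of $t^n+1$. Hence also $t^n+1=\prod_{j}(t+\zeta_j)$. Setting $t=z/y$ and multiplying through by $y^n$ gives $z^n+y^n=\prod_{j=0}^{n-1}(z+\zeta_j y)$; the case $y=0$ is trivial. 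With $z=m+x$ this becomes
\[
(m+x)^n+y^n=\prod_{j=0}^{n-1}(m+x+\zeta_j y)\qquad(m=0,1,2,\ldots).
\]

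\emph{Step 2: identify the two regularized products.} The sequence whose alternating zeta-regularized product appears on the left is termwise equal, by Step 1, to the sequence $\prod_{j}(m+x+\zeta_j y)$. The middle expression is, by definition, the left-hand side of Theorem~\ref{main} for this same sequence. So the first equality is an identity of the underlying sequences, read through the Deninger-type definition \eqref{Deninger} with weights $(-1)^m$.

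I would add one remark on branches of $\lambda^{-s}$. The hypothesis $x+\zeta_j y\notin\{0,-1,-2,\ldots\}$ ensures that no factor vanishes. For all sufficiently large $m$, both $(m+x)^n+y^n$ and each factor $m+x+\zeta_j y$ lie in a small sector around the positive real axis. There the principal branches satisfy $\bigl(\prod_j a_j\bigr)^{-s}=\prod_j a_j^{-s}$ consistently, and the corresponding logarithms agree. For the finitely many exceptional $m$, the terms are entire in $s$, and their derivatives at $s=0$ contribute $\log$ of the finite product. This changes nothing, provided the same branch convention is used on both sides, as is implicit in Theorem~\ref{main}.

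\emph{Step 3: apply Theorem~\ref{main}.} Take $x_j:=x+\zeta_{j-1}y$ for $j=1,\ldots,n$. These lie in $\mathbb{C}\setminus\{0,-1,-2,\ldots\}$ by hypothesis. Theorem~\ref{main} then gives
\[
\prod_{m=0}^{\infty}\Bigl(\prod_{j=0}^{n-1}(m+x+\zeta_j y)^{(-1)^m}\Bigr)=\frac{\bigl(\sqrt{\pi/2}\bigr)^n}{\prod_{j=0}^{n-1}\tilde\Gamma(x+\zeta_j y)},
\]
which completes the chain of equalities.

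The only point requiring care is the branch bookkeeping in Step 2. I would handle it as described: the regularized product depends only on the tail of the sequence up to a finite product, which is insensitive to the factorization. Everything else is a direct substitution into Theorem~\ref{main}.
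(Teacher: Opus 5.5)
Your proposal is correct and follows the paper's own argument: the paper likewise uses the factorization $z^n+y^n=\prod_{j=0}^{n-1}(z+\zeta_j y)$ for even $n$ (with $\zeta_j^n=-1$) and substitutes $x+\zeta_j y$ for the $x_j$ in Theorem~\ref{main}. The paper says nothing about branches, and your Step~2 fills that in soundly: the branch discrepancies affect only finitely many terms and change the derivative at $s=0$ by integer multiples of $2\pi i$, which disappear after exponentiation.
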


\begin{remark}
For $n=2,$ we see that $(m+x)^2+y^2=(m+x+iy)(m+x-iy).$ Hence we have Corollary \ref{Ler-typ2}:
$$
\prod_{m=0}^{\infty}\left((m+x)^2+y^2\right)^{(-1)^m}
=\frac{\frac{\pi}{2}}{\tilde{\Gamma}(x+iy)\tilde{\Gamma}(x-iy)}.
$$
In particular, setting $x=0,$ we recover
$$
\prod_{m=0}^{\infty}\left(m^2+y^2\right)^{(-1)^m}=y\tanh\!\left(\frac{\pi y}{2}\right),
$$
by the classical identities
$\Gamma\!\left(1/2+it\right)\Gamma\!\left(1/2-it\right)={\pi}/{\cosh(\pi t)}$
and $\Gamma(x)\Gamma(-x)=-{\pi}/(x\sin(\pi x)).$
For $n=4,$ we obtain
$$
\prod_{m=0}^{\infty}\left((m+x)^4+y^4\right)^{(-1)^m}=\frac{\left(\frac{\pi}{2}\right)^2}
{\prod_{\zeta^4=1}\tilde\Gamma\left(x+\zeta\frac{1+i}{\sqrt2}y\right)}.
$$ 
Let $x=0.$ Then 
$$m^4+y^4=\left(m^2+e^{\frac{\pi i}{2}}y^2\right)\left(m^2-e^{\frac{\pi i}{2}}y^2\right)
=(m^2+\alpha^2)(m^2+\overline{\alpha}^2),$$
where $\alpha=e^{{\pi i}/{4}}y=(1+i)y/{\sqrt2}.$
Furthermore,
$$
\begin{aligned}
\prod_{m=0}^{\infty}\left(m^4+y^4\right)^{(-1)^m}
&=\left(\alpha\tanh\!\left(\frac{\pi \alpha}{2}\right)\right)
\left(\overline\alpha\tanh\!\left(\frac{\pi \overline\alpha}{2}\right)\right) \\
&=y^2\left| \tanh\!\left(\frac{\pi (1+i)y}{2\sqrt2}\right) \right|^2.
\end{aligned}
$$
Since
$$
\left| \tanh\!\left(\frac{\pi (1+i)y}{2\sqrt2}\right) \right|^2=\frac{\cosh\left(\frac{\pi y}{\sqrt2}\right)-\cos\left(\frac{\pi y}{\sqrt2}\right)}
{\cosh\left(\frac{\pi y}{\sqrt2}\right)+\cos\left(\frac{\pi y}{\sqrt2}\right)},
$$
when $y=(2k+1)/{\sqrt2}$ with $k=0,1,2,\ldots,$ we have 
$$\left| \tanh\!\left(\frac{\pi (1+i)y}{2\sqrt2}\right) \right|^2=1$$ 
and
$$
\prod_{m=0}^{\infty}\left(m^4+\left(\frac{2k+1}{\sqrt2}\right)^4\right)^{(-1)^m}=\frac{(2k+1)^2}{2}.
$$
\end{remark}

\section{Proofs}\label{sec:proofs}

We now supply the detailed proofs of all results stated in Section~\ref{sec:main}. As the reader will recall, our main contributions were organized around four interconnected themes: generalized Lerch-type identities (see Section~\ref{subsec:gen-lerch}), structural properties and special values of multiple gamma functions (see Section~\ref{subsec:special-values}), an alternating analogue of Shintani's double sine function (see Section~\ref{subsec:double-sine}), and the culminating alternating Mizuno formula (see Section~\ref{subsec:mizuno-alt}). The present section follows precisely the same organization, with each of its four subsections corresponding to one of these themes.

Throughout this section, we pay careful attention to convergence issues and the legitimacy of analytic continuation, ensuring that all zeta-regularized products are well-defined in the indicated domains.

\subsection{Proofs for the generalized Lerch-type formula}\label{sec:proofs:gen-lerch}
In this section, we prove the generalized Lerch-type formula for the multiple alternating gamma functions (see Proposition~\ref{gen-lerch2}), together with its consequences: the explicit evaluation of $\bar\Gamma_1(x)$ (see Corollary~\ref{Le-type}), the compact derivation of Wallis' formula (see Remark~\ref{will-f}), the closed-form evaluation of $\exp(\zeta_E'(-1))$ (see Corollary~\ref{gam-di-(-1)}), and the Stirling-type asymptotic expansion (see Theorem~\ref{St-form}). We also include the proofs of Proposition~\ref{proposition1} and Proposition~\ref{proposition2}, which establish fundamental relations between the classical and alternating gamma functions.
\subsection*{Proof of Proposition \ref{proposition1}}
By (\ref{gam-1}), we have
\begin{equation}\label{Ga}
		\Gamma_1(x)=e^{\zeta'(0,x)}.
\end{equation}
The following equality comes from the  difference functional equation of the Hurwitz zeta function $\zeta(s,x)$:
\begin{equation}\label{zeta}
		\zeta'(0,x+1)=\zeta'(0,x)+\log x
\end{equation}
(see \cite[p. 497]{Var}).
Then substituting (\ref{zeta}) into (\ref{Ga}), we get
\begin{equation}\label{G1-1}
		\Gamma_1(x+1)=x\Gamma_1(x).
\end{equation}
So by Bohr--Mollerup Theorem (see, e.g., \cite[p. 44]{GTM172}, the uniqueness of gamma functions), we have
\begin{equation}\label{G1-2}
		\Gamma_1(x)=\Gamma(x)R
\end{equation}
for a constant $R$ and by (\ref{Riemann2})
\begin{equation}\label{G1-3}
		R=\frac{\Gamma_1(1)}{\Gamma(1)}=e^{\zeta'(0,1)}=e^{\zeta'(0)}=e^{-\log\sqrt{2\pi}}=\frac1{\sqrt{2\pi}},
\end{equation}
since $\zeta(s,1)=\zeta(s).$
Therefore we have
\begin{equation}\label{G1-re0}
		\Gamma_1(x)=\frac{\Gamma(x)}{\sqrt{2\pi}},
\end{equation}
which is what we want.
\hfill$\square$

\subsection*{Proof of Proposition \ref{proposition2}}
Letting $N=1$ in (\ref{Barnes-E}), we recover the alternating Hurwitz zeta function
	$$\zeta_{E}(s,x,\omega_1)=\sum_{m_1=0}^{\infty}\frac{(-1)^{m_1}}{(m_1\omega_1+x)^{s}}
	=\omega_1^{-s}\zeta_{E}\left(s,\frac{x}{\omega_1}\right).$$
The following derivative formula of $\zeta_{E}(s,x)$ is shown by Williams and Zhang in \cite[Proposition 3]{WZ}:
$$\zeta_E'\left(0,x\right)=\log\frac{\Gamma\left(\frac{x}{2}\right)}{\Gamma\left(\frac{x+1}{2}\right)}-\frac12\log 2,$$
where $\Gamma(x)$ is the  Euler gamma function.
Hence by (\ref{ru1-ome}) we have
\begin{equation}\label{ex-gamma-2}
	\begin{aligned}
			\bar\Gamma_{1}(x,\omega_1)&=\exp\left(\frac{\partial}{\partial s}\zeta_{E}(s,x,\omega_1)\biggl|_{s=0}\right) \\
			&=\omega_1^{-\zeta_E\left(0,\frac{x}{\omega_1}\right)} \exp\left(\zeta'_E\left(0,\frac{x}{\omega_1}\right)\right)\\
			&=\frac{\bar\Gamma_1\left(\frac{x}{\omega_1}\right)}{\sqrt{\omega_1}} \\
			&=\frac{\Gamma\left(\frac{x}{2\omega_1}\right)}{\sqrt{2 \omega_1}\Gamma\left(\frac{x+\omega_1}{2\omega_1}\right)},
	\end{aligned}
\end{equation}
which is what we want.
\hfill$\square$
	
\subsection*{Proof of Proposition \ref{gen-lerch2}}
The multiple zeta function $\zeta_{E,N}(s,x)$ may also be represented by the Mellin transform as follows
\begin{equation}\label{B-E-def-3-2}
		\zeta_{E,N}(s,x)=\frac1{\Gamma(s)}\int_0^\infty t^se^{-xt}\left(1+e^{-t}\right)^{-N}\frac{\d t}{t}
\end{equation}
for Re$(s)>0$ and Re$(x)>0.$	
This can be shown as follows. Start with the  power series
\begin{equation}\label{re-1-2}
		\begin{aligned}
			\left(1+e^{-t}\right)^{-N}=\sum_{m_1,\ldots,m_N=0}^\infty (-1)^{m_1+\cdots+m_N} e^{-t(m_1+\cdots+m_N)}
		\end{aligned}
\end{equation}
for $t > 0.$
Then substituting  (\ref{re-1-2}) into (\ref{B-E-def-3-2}) we have
\begin{equation}\label{B-E-def-2-2}
		\begin{aligned}
			\zeta_{E,N}(s,x)
			&=\frac1{\Gamma(s)}\int_0^\infty t^se^{-xt}\left(1+e^{-t}\right)^{-N}\frac{\d t}{t}\\
			&=\sum_{m_1,\ldots,m_N=0}^\infty\frac{(-1)^{m_1+\cdots+m_N}}{\Gamma(s)}\int_0^\infty t^s e^{-(x+m_1+\cdots+m_N) t}\frac{\d t}{t} \\
			&=\sum_{m_1,\ldots,m_N=0}^\infty\frac{(-1)^{m_1+\cdots+m_N}}{(x+m_1+\cdots+m_N)^s},
		\end{aligned}
\end{equation}
where Re$(s)>0$ and Re$(x)>0.$	
By (\ref{B-E-def-3-2}) we have
\begin{equation}\label{dir-mul-int-2}
		\begin{aligned}
			\zeta_{E,N}(s,x)&=\frac1{\Gamma(s)}\int_0^\infty t^se^{-xt}\left(1+e^{-t}\right)^{-N}\frac{\d t}{t} \\
			&=\frac1{\Gamma(s)}\int_1^\infty\left(\frac1{1+u^{-1}}\right)^{N}u^{-x}(\log u)^{s-1}\frac{\d u}{u} \\
			&=\frac1{\Gamma(s)}\sum_{n=0}^\infty\binom{-N}{n}\int_1^\infty u^{-x-n}(\log u)^{s-1}\frac{\d u}{u} \\
			&=\sum_{n=0}^\infty(-1)^n\binom{n+N-1}{N-1}(x+n)^{-s}.
		\end{aligned}
\end{equation}
The above equation is established for Re$(x)>0$ and  for all $s\in\mathbb{C}$ by analytic continuation.
So we have
\begin{equation}\label{dir-mul-int-an0}
		\begin{aligned}
			\bar\Gamma_{N}(x)&=\exp\left(\frac{\partial}{\partial s}\zeta_{E,N}(s,x)\biggl|_{s=0}\right)\\
			&=\exp\left(\sum_{n=0}^\infty(-1)^{n+1}\binom{n+N-1}{N-1}\log(x+n)\right) \\
			&=\prod_{n=0}^\infty(x+n)^{(-1)^{n+1}\binom{n+N-1}{N-1}},
		\end{aligned}
\end{equation}
where the last equality is understood in the sense of zeta-regularized products.
Thus we obtain Proposition \ref{gen-lerch2}.
\hfill$\square$

\subsection*{Proof of Corollary \ref{gam-psi-def}}
By Corollary~\ref{Le-type} and Proposition~\ref{proposition2}, we have
\[
\bar\Gamma_1(x)
= \prod_{n=0}^{\infty}(n+x)^{(-1)^{n+1}}
= \frac{1}{\sqrt{2}}\frac{\Gamma\left(\frac{x}{2}\right)}{\Gamma\left(\frac{x+1}{2}\right)}.
\]
On the other hand, applying Corollary~\ref{Le-type} with $x=1$ gives
\[
\prod_{n=0}^{\infty}(n+1)^{(-1)^{n+1}}
= \prod_{k=1}^{\infty} k^{(-1)^k}
= \sqrt{\frac{\pi}{2}}.
\]
Dividing the two identities, we conclude that
\[
\prod_{n=0}^{\infty}\left(\frac{n+x}{n+1}\right)^{(-1)^{n+1}}
= \frac{\frac{1}{\sqrt{2}}\frac{\Gamma\left(\frac x2\right)}{\Gamma(\frac{x+1}{2})}}{\sqrt{\frac{\pi}{2}}}
= \frac{\Gamma\left(\frac{x}{2}\right)}{\sqrt{\pi}\,\Gamma\left(\frac{x+1}{2}\right)}.
\]
This completes the proof.
\hfill$\square$

\subsection*{Proof of Corollary \ref{gam-di-(-1)}}
From the relation \(\zeta_E(s)=(1-2^{1-s})\zeta(s)\) and the definition of \(A\) (see \eqref{Adef}), one obtains
\[
\zeta_E'(-1)=-\frac14-\frac13\log 2+3\log A.
\]
Using this and
\[
\zeta_E'(s)=\sum_{n=1}^{\infty}(-1)^n n^{-s}\log n
=\log\!\left(\prod_{n=1}^{\infty} n^{(-1)^n n^{-s}}\right),
\]
we get, by analytic continuation at \(s=-1\),
\[
\log\!\left(\prod_{n=1}^{\infty} n^{(-1)^n n}\right)
=-\frac14-\frac13\log 2+3\log A.
\]
Thus the desired product formula follows.
\hfill$\square$

\subsection*{Proof of  Theorem \ref{St-form}}
We first establish an auxiliary identity.

\begin{lemma}\label{lem-St}
For any nonnegative integer $N$,
\[
\left(\prod_{n=1}^{N} n^{(-1)^n}\right)^{(-1)^N}
\left(\frac{\pi}{2}\right)^{\frac{1-(-1)^N}{2}}
= \sqrt{\frac{\pi}{2}}\left(\prod_{n=1}^{\infty}(n+N)^{(-1)^n}\right)^{-1}.
\]
\end{lemma}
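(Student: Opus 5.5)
We will interpret the right-hand side via Corollary~\ref{Le-type} and reduce the statement to the recurrence for $\bar\Gamma_1$. By a shift of index, $\prod_{n=1}^{\infty}(n+N)^{(-1)^n}=\prod_{n=0}^\infty (n+N+1)^{(-1)^{n+1}}$. The right-hand factor is the zeta-regularized product associated with $-\zeta_E(s,N+1)$, so by (\ref{bGa}) it equals $\bar\Gamma_1(N+1)$. The lemma is therefore equivalent to
\[
\left(\prod_{n=1}^{N} n^{(-1)^n}\right)^{(-1)^N}\left(\frac{\pi}{2}\right)^{\frac{1-(-1)^N}{2}}=\sqrt{\frac{\pi}{2}}\;\bar\Gamma_1(N+1)^{-1}.
\]
Equivalently, we must compute $\bar\Gamma_1(N+1)$ in closed form.

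For this we iterate the recurrence $\bar\Gamma_1(x+1)=x^{-1}\bar\Gamma_1(x)^{-1}$. This is the case $N=1$ of the recurrence stated at the end of Section~\ref{subsec:multAlt}, with $\bar\Gamma_0(x)=x^{-1}$. It can also be read off from $\zeta_E(s,x)+\zeta_E(s,x+1)=x^{-s}$ by differentiating at $s=0$. Starting from $\bar\Gamma_1(1)=\sqrt{\pi/2}$ (Corollary~\ref{Le-type}), induction on $N$ gives
\[
\bar\Gamma_1(N+1)=\left(\prod_{n=1}^{N} n^{(-1)^{n+N+1}}\right)\left(\sqrt{\frac{\pi}{2}}\right)^{(-1)^N}.
\]
For the inductive step, write $\bar\Gamma_1(N+2)=(N+1)^{-1}\bar\Gamma_1(N+1)^{-1}$. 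Inverting flips every exponent $(-1)^{n+N+1}$ to $(-1)^{n+N+2}$ and the exponent on $\sqrt{\pi/2}$ to $(-1)^{N+1}$. The new factor $(N+1)^{-1}$ equals $(N+1)^{(-1)^{(N+1)+(N+1)+1}}$, so it fits the same pattern. The base case $N=0$ is $\bar\Gamma_1(1)=\sqrt{\pi/2}$ with an empty product.

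Finally we substitute this into the right-hand side. Note that $\prod_{n=1}^{N} n^{(-1)^{n+N+1}}=\bigl(\prod_{n=1}^N n^{(-1)^n}\bigr)^{-(-1)^N}$. This gives
\[
\sqrt{\frac{\pi}{2}}\,\bar\Gamma_1(N+1)^{-1}=\left(\prod_{n=1}^{N} n^{(-1)^n}\right)^{(-1)^N}\left(\frac{\pi}{2}\right)^{\frac{1-(-1)^N}{2}},
\]
since $\tfrac12-\tfrac12(-1)^N=\tfrac{1-(-1)^N}{2}$, which is the lemma. It is worth checking the parity cases separately: for $N$ even the power of $\pi/2$ is $0$, and for $N$ odd it is $1$. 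No genuine obstacle is expected here.

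The only delicate point is the first step: identifying the regularized product over the shifted sequence $\{n+N\}_{n\ge1}$ with $\bar\Gamma_1(N+1)$. This follows directly from the definition (\ref{Deninger}), because the associated Dirichlet series is exactly $-\zeta_E(s,N+1)$. That series is entire, so no extra correction terms arise from the regularization.
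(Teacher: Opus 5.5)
Your proof is correct, but it takes a different route from the paper's.

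The paper does not identify the right-hand product with $\bar\Gamma_1(N+1)$. Instead it sets $P_N=\prod_{n\ge1}n^{(-1)^n}\bigl(\prod_{n\ge1}(n+N)^{(-1)^n}\bigr)^{-1}$ and reindexes the second factor as $\prod_{n\ge N+1}n^{(-1)^{n-N}}$. It then treats the regularized products as formal products, so the tail exponent $(-1)^n-(-1)^{n-N}$ is $0$ for $N$ even and $2(-1)^n$ for $N$ odd. In the odd case it finishes with the tail identity \eqref{4.1+}, namely $\prod_{n\ge N+1}n^{(-1)^n}=\sqrt{\pi/2}\,\bigl(\prod_{n=1}^N n^{(-1)^n}\bigr)^{-1}$.

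You replace this parity split and formal splitting with a single identification, $\prod_{n\ge1}(n+N)^{(-1)^n}=\exp(\zeta_E'(0,N+1))=\bar\Gamma_1(N+1)$. You then induct on $\bar\Gamma_1(x)\bar\Gamma_1(x+1)=x^{-1}$, which is just $\zeta_E(s,x)+\zeta_E(s,x+1)=x^{-s}$ differentiated at $s=0$.

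What your route buys is rigor at low cost. Every step is an identity of entire functions of $s$, so you never have to justify manipulating zeta-regularized products as ordinary products. The one fact that makes the paper's manipulations legitimate is that dropping finitely many terms changes the Dirichlet series only by a finite sum, and that is exactly what your functional equation encodes. You also obtain the closed form of $\bar\Gamma_1(N+1)$ as a by-product. The paper's version, for its part, shows directly from the exponents where the dependence on the parity of $N$ comes from.
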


\begin{proof}
Define
\[
P_N := \prod_{n=1}^{\infty} n^{(-1)^n}
\left(\prod_{n=1}^{\infty} (n+N)^{(-1)^n}\right)^{-1}.
\]
Since the infinite products are understood in the zeta-regularised sense, we may manipulate them as formal products. First note that
\[
\prod_{n=1}^{\infty}(n+N)^{(-1)^n}
= \prod_{n=N+1}^{\infty} n^{(-1)^{n-N}},
\]
by shifting the index. Hence
\[
P_N = \left(\prod_{n=1}^{N} n^{(-1)^n}\right)
\prod_{n=N+1}^{\infty} n^{(-1)^n - (-1)^{n-N}}.
\]
We now split into two cases.

\medskip
\underline{Case 1: $N$ even.} Then $(-1)^{n-N}=(-1)^n$ for all $n$, so the exponent in the tail is zero, and hence
\[
P_N = \prod_{n=1}^{N} n^{(-1)^n}.
\]

\medskip
\underline{Case 2: $N$ odd.} Then $(-1)^{n-N}=-(-1)^n$, so
\[
(-1)^n - (-1)^{n-N} = (-1)^n - (-(-1)^n) = 2(-1)^n.
\]
Thus
\[
P_N = \left(\prod_{n=1}^{N} n^{(-1)^n}\right)
\left(\prod_{n=N+1}^{\infty} n^{(-1)^n}\right)^2.
\]
Now recall from Corollary~\ref{Le-type} that for any $N$,
\begin{equation}\label{4.1+}
\prod_{n=N+1}^{\infty} n^{(-1)^n}
= \sqrt{\frac{\pi}{2}}\left(\prod_{n=1}^{N} n^{(-1)^n}\right)^{-1}.
\end{equation}
Indeed, this follows by dividing the alternating Lerch formula for $\prod_{n=0}^{\infty}(n+x)^{(-1)^{n+1}}$ by its tail and using $\bar\Gamma_1(1)=\sqrt{\pi/2}$ (see Corollary~\ref{Le-type} and the discussion preceding it). Applying (\ref{4.1+}) in the odd case gives
\[
P_N = \left(\prod_{n=1}^{N} n^{(-1)^n}\right)
\left(\sqrt{\frac{\pi}{2}}\left(\prod_{n=1}^{N} n^{(-1)^n}\right)^{-1}\right)^2
= \frac{\pi}{2}\left(\prod_{n=1}^{N} n^{(-1)^n}\right)^{-1}.
\]

Combining both cases, we can write the result uniformly as
\[
P_N = \left(\prod_{n=1}^{N} n^{(-1)^n}\right)^{(-1)^N}
\left(\frac{\pi}{2}\right)^{\frac{1-(-1)^N}{2}},
\]
because when $N$ is even, $(-1)^N=1$ and the second factor is $\left({\pi}/{2}\right)^0=1$; when $N$ is odd, $(-1)^N=-1$ and the second factor is 
${\pi}/{2}$, which matches the odd case above.

By definition of $P_N$, this is exactly
\[
\frac{\sqrt{\frac\pi 2}}{\prod_{n=1}^{\infty}(n+N)^{(-1)^n}}
= \left(\prod_{n=1}^{N} n^{(-1)^n}\right)^{(-1)^N}
\left(\frac{\pi}{2}\right)^{\frac{1-(-1)^N}{2}},
\]
which proves the lemma.
\end{proof}

Next, we recall the asymptotic expansion (see \cite[Theorem 3.1]{24HK})
\begin{equation}\label{Ezeta-ex}
\zeta_E(s,x)=\frac12 x^{-s}+\frac14 s x^{-s-1}+sO(x^{-s-2})
\qquad (x\to\infty).
\end{equation}
Differentiating with respect to $s$ at $s=0$ gives
\begin{equation}\label{Ezeta-ex2}
\zeta_E'(0,x)=-\frac12\log x+\frac14x^{-1}+O(x^{-2})
\qquad (x\to\infty).
\end{equation}
Now, from the definition $\bar\Gamma_1(x)=\exp(\zeta_E'(0,x))$ (see \eqref{ru1-ome}) and the product representation
$\bar\Gamma_1(x)=\prod_{n=0}^{\infty}(n+x)^{(-1)^{n+1}}$ in \eqref{bGa}, we have
\[
\prod_{n=0}^{\infty}(n+x)^{(-1)^n}
= \bar\Gamma_1(x)^{-1}
= \exp\bigl(-\zeta_E'(0,x)\bigr).
\]
Therefore,
\begin{equation}\label{Ezeta-ex3}
-\log\prod_{n=0}^{\infty}(n+x)^{(-1)^n}
= \zeta_E'(0,x)
= -\frac12\log x+\frac14x^{-1}+O(x^{-2}).
\end{equation}
Since the $n=0$ term contributes a factor $x$, we rewrite this as
\begin{equation}\label{Ezeta-ex3-1}
-\log\prod_{n=1}^{\infty}(n+x)^{(-1)^n}
= \frac12\log x+\frac14x^{-1}+O(x^{-2}).
\end{equation}

We now prove Theorem~\ref{St-form}. Taking logarithms on both sides of Lemma~\ref{lem-St} gives
\begin{equation}\label{3.7A}
(-1)^N\log\!\left(\prod_{n=1}^N n^{(-1)^n}\right)
+(1-(-1)^N)\log\sqrt{\frac{\pi}{2}}
= \log\sqrt{\frac{\pi}{2}}
-\log\prod_{n=1}^{\infty}(n+N)^{(-1)^n}.
\end{equation}
Next, using \eqref{Ezeta-ex3-1} with $x=N$ (and noting that the implied $O(N^{-2})$ term is uniform in $N$), we get
\begin{equation}\label{3.7B}
-\log\prod_{n=1}^{\infty}(n+N)^{(-1)^n}
= \frac12\log N+\frac{1}{4N}+O(N^{-2}).
\end{equation}
Substituting (\ref{3.7B}) into the right-hand side of (\ref{3.7A}), the term $\log\sqrt{\pi/2}$ remains unchanged, and we obtain
\begin{equation}\label{3.7C}
(-1)^N\log\!\left(\prod_{n=1}^N n^{(-1)^n}\right)
+(1-(-1)^N)\log\sqrt{\frac{\pi}{2}}
= \log\sqrt{\frac{\pi}{2}}+\frac12\log N+\frac{1}{4N}+O(N^{-2}).
\end{equation}
Now, to solve for $\log(\prod_{n=1}^N n^{(-1)^n})$, we note that the coefficient of $\log\sqrt{\pi/2}$ on the left-hand side is $1-(-1)^N$. If we subtract $(1-(-1)^N)\log\sqrt{\pi/2}$ from both sides, the right-hand side becomes
\[
\log\sqrt{\frac{\pi}{2}} - (1-(-1)^N)\log\sqrt{\frac{\pi}{2}}
+ \frac12\log N+\frac{1}{4N}+O(N^{-2}).
\]
Since $1-(1-(-1)^N)=(-1)^N$, this simplifies to
\[
(-1)^N \log\sqrt{\frac{\pi}{2}} + \frac12\log N+\frac{1}{4N}+O(N^{-2}).
\]
Dividing both sides of (\ref{3.7C}) by $(-1)^N$ (which is $\pm1$) finally yields
\[
\log\!\left(\prod_{n=1}^N n^{(-1)^n}\right)
= \log\sqrt{\frac{\pi}{2}}+\frac{(-1)^N}{2}\log N+\frac{(-1)^N}{4N}+O(N^{-2}),
\]
which is precisely the desired Stirling-type formula.
\hfill$\square$

\subsection{Proofs for the special values of multiple gamma functions}\label{sec:proofs:special-values}
In this section, we prove the explicit factorization of $\bar\Gamma_N(x)$ in terms of Barnes' multiple gamma functions (see Theorem~\ref{gam2-prod}), which is the central structural result of Section~\ref{subsec:special-values}. We then establish the scaling formula for alternating products (see Example~\ref{exam-2}) and the Gauss--Legendre type multiplication formula (see Theorem~\ref{thm-g-Ga-Le}).

\subsection*{Proof of Theorem~\ref{gam2-prod}}
We begin by expressing the multiple alternating Hurwitz zeta function $\zeta_{E,N}(s,x)$ in terms of the Barnes multiple zeta function $\zeta_N(s,x)$. Recall that
\[
\zeta_{E,N}(s,x)=\sum_{m_1,\ldots,m_N=0}^\infty
\frac{(-1)^{m_1+\cdots+m_N}}{(x+m_1+\cdots+m_N)^s},
\]
where here and below $\zeta_N(s,x)$ denotes the Barnes multiple zeta function with all parameters equal to $1$, namely
\[
\zeta_N(s,x)=\sum_{m_1,\ldots,m_N=0}^\infty
\frac{1}{(x+m_1+\cdots+m_N)^s}.
\]
For each index $m_j$, write $m_j=2p_j+\varepsilon_j$ with $\varepsilon_j\in\{0,1\}$ and $p_j\ge0$. Then
\[
m_1+\cdots+m_N=2(p_1+\cdots+p_N)+(\varepsilon_1+\cdots+\varepsilon_N),
\]
so that
\[
(-1)^{m_1+\cdots+m_N}=(-1)^{\varepsilon_1+\cdots+\varepsilon_N}.
\]
Hence
\begin{align*}
\zeta_{E,N}(s,x)
&=\sum_{\varepsilon_1,\ldots,\varepsilon_N\in\{0,1\}}(-1)^{\varepsilon_1+\cdots+\varepsilon_N}
\sum_{p_1,\ldots,p_N=0}^\infty
\Bigl(2(p_1+\cdots+p_N)+\varepsilon_1+\cdots+\varepsilon_N+x\Bigr)^{-s} \\
&=2^{-s}\sum_{\varepsilon_1,\ldots,\varepsilon_N\in\{0,1\}}(-1)^{\varepsilon_1+\cdots+\varepsilon_N}
\zeta_N\!\left(s,\frac{x+\varepsilon_1+\cdots+\varepsilon_N}{2}\right).
\end{align*}
Grouping terms with the same value of $r=\varepsilon_1+\cdots+\varepsilon_N$ and using
\[
\#\{(\varepsilon_1,\ldots,\varepsilon_N)\in\{0,1\}^N:\varepsilon_1+\cdots+\varepsilon_N=r\}=\binom{N}{r},
\]
we obtain
\begin{equation}\label{eq:ezeta-decomp-proof}
\zeta_{E,N}(s,x)
=2^{-s}\sum_{r=0}^{N}(-1)^r\binom{N}{r}\zeta_N\!\left(s,\frac{x+r}{2}\right).
\end{equation}

Differentiating \eqref{eq:ezeta-decomp-proof} with respect to $s$ gives
\begin{align*}
\zeta_{E,N}'(s,x)
&=2^{-s}\sum_{r=0}^{N}(-1)^r\binom{N}{r}
\left(\zeta_N'\!\left(s,\frac{x+r}{2}\right)
-\zeta_N\!\left(s,\frac{x+r}{2}\right)\log2\right).
\end{align*}
Evaluating at $s=0$, we get
\begin{equation}\label{eq:ezeta-derivative-proof}
\zeta_{E,N}'(0,x)
=\sum_{r=0}^{N}(-1)^r\binom{N}{r}
\zeta_N'\!\left(0,\frac{x+r}{2}\right)
-\log2 \sum_{r=0}^{N}(-1)^r\binom{N}{r}
\zeta_N\!\left(0,\frac{x+r}{2}\right).
\end{equation}

We now compute the second sum on the right-hand side of \eqref{eq:ezeta-derivative-proof}.
Recall the forward difference operator $\Delta_h$ defined by
\begin{equation}\label{fo-dif-def}
\Delta_h f(x)=\frac{f(x+h)-f(x)}{h}\qquad (h>0).
\end{equation}
Iterating this definition yields
\[
\Delta_h^N f(x)=\frac{1}{h^N}\sum_{r=0}^{N}(-1)^{N-r}\binom{N}{r}f(x+rh).
\]
In particular, for $h=1$,
\[
\Delta_1^N f(x)=\sum_{r=0}^{N}(-1)^{N-r}\binom{N}{r}f(x+r).
\]

Also recall the value of the Barnes multiple zeta function at $s=0$:
\[
\zeta_N(0,x)=\frac{(-1)^N}{N!}B_N^{(N)}(x),
\]
where $B_n^{(\alpha)}(x)$ is the generalized Bernoulli polynomial defined by
\begin{equation}\label{h-ber-def}
\left(\frac{t}{e^t-1}\right)^{\alpha}e^{xt}
=\sum_{n=0}^{\infty}B_n^{(\alpha)}(x)\frac{t^n}{n!}.
\end{equation}
It is well known that $B_N^{(N)}(x)$ is a monic polynomial of degree $N$ with leading term $x^N$.

Now set
\[
f(x)=B_N^{(N)}\!\left(\frac{x}{2}\right).
\]
Then
\[
f(x+r)=B_N^{(N)}\!\left(\frac{x+r}{2}\right).
\]
Applying the above finite difference identity, we have
\[
\Delta_1^N f(x)=\sum_{r=0}^{N}(-1)^{N-r}\binom{N}{r}
B_N^{(N)}\!\left(\frac{x+r}{2}\right).
\]
Multiplying both sides by $(-1)^N$ gives
\[
\sum_{r=0}^{N}(-1)^r\binom{N}{r}
B_N^{(N)}\!\left(\frac{x+r}{2}\right)
=(-1)^N \Delta_1^N B_N^{(N)}\!\left(\frac{x}{2}\right).
\]
Since the leading term of $B_N^{(N)}(x)$ is $x^N$, the leading term of $B_N^{(N)}\bigl(x/2\bigr)$ is
\[
\left(\frac{x}{2}\right)^N=\frac{x^N}{2^N}.
\]
When we apply $\Delta_1^N$ to a polynomial of degree $N$ with leading coefficient $1/2^N$, the result is the constant
\[
\Delta_1^N B_N^{(N)}\!\left(\frac{x}{2}\right)
=\frac{N!}{2^N}.
\]
Therefore
\[
\sum_{r=0}^{N}(-1)^r\binom{N}{r}
B_N^{(N)}\!\left(\frac{x+r}{2}\right)
=(-1)^N\frac{N!}{2^N}.
\]
Consequently,
\begin{align*}
\sum_{r=0}^{N}(-1)^r\binom{N}{r}
\zeta_N\!\left(0,\frac{x+r}{2}\right)
&=\frac{(-1)^N}{N!}
\sum_{r=0}^{N}(-1)^r\binom{N}{r}
B_N^{(N)}\!\left(\frac{x+r}{2}\right) \\
&=\frac{(-1)^N}{N!}\cdot (-1)^N\frac{N!}{2^N} \\
&=2^{-N}.
\end{align*}

Inserting this into \eqref{eq:ezeta-derivative-proof}, we obtain
\begin{equation}\label{eq:ezeta-final-proof}
\zeta_{E,N}'(0,x)
=\sum_{r=0}^{N}(-1)^r\binom{N}{r}
\zeta_N'\!\left(0,\frac{x+r}{2}\right)
-2^{-N}\log2.
\end{equation}

Finally, recall that the normalized Barnes multiple gamma function is defined by
\[
\Gamma_N(x)=\exp\bigl(\zeta_N'(0,x)\bigr).
\]
Thus
\begin{align*}
\sum_{r=0}^{N}(-1)^r\binom{N}{r}
\zeta_N'\!\left(0,\frac{x+r}{2}\right)
&=\log\left(\prod_{r=0}^{N}
\Gamma_N\!\left(\frac{x+r}{2}\right)^{(-1)^r\binom{N}{r}}\right).
\end{align*}
Substituting this into \eqref{eq:ezeta-final-proof} and exponentiating, we conclude that
\[
\bar\Gamma_N(x)
=\exp\bigl(\zeta_{E,N}'(0,x)\bigr)
=2^{-2^{-N}}
\prod_{r=0}^{N}
\Gamma_N\!\left(\frac{x+r}{2}\right)^{(-1)^r\binom{N}{r}}.
\]
Equivalently, in terms of the zeta-regularized product from Proposition~\ref{gen-lerch2},
\[
\prod_{n=0}^\infty(n+x)^{(-1)^{n+1}\binom{n+N-1}{N-1}}
=2^{-2^{-N}}
\prod_{r=0}^{N}
\Gamma_N\!\left(\frac{x+r}{2}\right)^{(-1)^r\binom{N}{r}}.
\]
This proves the main factorization.

For the special case $N=2$, the formula reads
\[
\bar\Gamma_2(x)
=2^{-\frac14}
\frac{\Gamma_2\!\left(\frac{x}{2}\right)
\Gamma_2\!\left(\frac{x+2}{2}\right)}
{\Gamma_2\!\left(\frac{x+1}{2}\right)^2}.
\]
Taking $x=1$ and using the product representation
\[
\bar\Gamma_2(1)=\prod_{n=0}^{\infty}(n+1)^{(-1)^{n+1}(n+1)}
\]
together with the evaluation from Corollary~\ref{gam-di-(-1)}, namely
\[
\exp\bigl(\zeta_E'(-1)\bigr)=2^{-\frac13}e^{-\frac14}A^3,
\]
we obtain the stated special value. Indeed,
\[
\bar\Gamma_2(1)
=\frac{\Gamma_2\!\left(\frac12\right)\Gamma_2\!\left(\frac32\right)}
{2^{\frac14}\Gamma_2(1)^2}.
\]
The second displayed identity of the theorem follows by comparing this with the evaluation of $\bar\Gamma_2(1)$ in terms of $A$. This completes the proof.
\hfill$\square$

\subsection*{Proof of Example \ref{exam-2}}
We begin with the identity
$$
\sum_{n=0}^\infty (-1)^n (n\tau + x)^{-s}
= \tau^{-s}\,\zeta_E\!\left(s,\frac{x}{\tau}\right),
$$
valid for Re$(s)>0$ and extended to $s=0$ by analytic continuation.
Formally differentiating with respect to $s$ and evaluating at $s=0$, we obtain
\begin{equation}\label{ex2-pf-1-rev}
\begin{aligned}
\exp\left(\left.\frac{\partial}{\partial s}
\left(\sum_{n=0}^\infty (-1)^n (n\tau + x)^{-s}\right)\right|_{s=0}\right)
&= \exp\left(\sum_{n=0}^\infty (-1)^{n+1}\log(n\tau+x)\right) \\
&= \prod_{n=0}^\infty (n\tau + x)^{(-1)^{n+1}},
\end{aligned}
\end{equation}
where the last equality is understood in the sense of zeta-regularized products.
On the other hand, from Corollary \ref{Le-type}, differentiating the left-hand side yields
\begin{equation}\label{ex2-pf-12-rev}
\begin{aligned}
\exp\left(\left.\frac{\partial}{\partial s}
\left(\tau^{-s}\zeta_E\left(s,\frac{x}{\tau}\right)\right)\right|_{s=0}\right)
&= \exp\left(
(-\log \tau)\,\zeta_E\!\left(0,\frac{x}{\tau}\right)+ \zeta'_E\!\left(0,\frac{x}{\tau}\right)
\right) \\
&=\tau^{-\zeta_E\!\left(0,\frac{x}{\tau}\right)}\exp\left(\zeta'_E\!\left(0,\frac{x}{\tau}\right)\right) \\
&=\tau^{-\zeta_E\!\left(0,\frac{x}{\tau}\right)}\prod_{n=0}^\infty\left(n+\frac x\tau\right)^{(-1)^{n+1}}.
\end{aligned}
\end{equation}
Using the known values
$$
\zeta_E\!\left(0,\frac{x}{\tau}\right)=\frac{1}{2},
\quad
\zeta'_E\!\left(0,\frac{x}{\tau}\right)
= \log \bar{\Gamma}_1\!\left(\frac{x}{\tau}\right),
$$
we deduce that
$$
\exp\left(\left.\frac{\partial}{\partial s}
\left(\tau^{-s}\zeta_E\left(s,\frac{x}{\tau}\right)\right)\right|_{s=0}\right)
= \tau^{-1/2}\,\bar{\Gamma}_1\!\left(\frac{x}{\tau}\right).
$$
Comparing \eqref{ex2-pf-1-rev} and \eqref{ex2-pf-12-rev}, we conclude that
$$
\prod_{n=0}^\infty (n\tau + x)^{(-1)^{n+1}}
= \frac{\bar{\Gamma}_1\!\left(\frac{x}{\tau}\right)}{\sqrt{\tau}}.
$$
This completes the proof. 
\hfill$\square$

\subsection*{Proof of Theorem \ref{thm-g-Ga-Le}}
Suppose that $N_1$ and $N_2$  are odd integers.
We write 
$$i=N_2j+m_2,\quad\text{where } 0\leq m_2\leq N_2-1\text { and } j=0,1,2,\ldots,$$
and obtain
\begin{align*}
\sum_{m_1=0}^{N_1-1}(-1)^{m_1}&\zeta_E\left(s,N_2x+\frac{N_2}{N_1}m_1\right) 
=\sum_{m_1=0}^{N_1-1}(-1)^{m_1}\sum_{i=0}^\infty\frac{(-1)^i}{\left(N_2x+\frac{N_2}{N_1}m_1+i\right)^{s}} \\
&=\left(\frac{N_1}{N_2}\right)^s
\sum_{m_1=0}^{N_1-1}\sum_{i=0}^\infty\frac{(-1)^{i+m_1}}{\left(N_1x+m_1+\frac{N_1}{N_2}i\right)^{s}} \\
&=\left(\frac{N_1}{N_2}\right)^s
\sum_{m_1=0}^{N_1-1}\sum_{m_2=0}^{N_2-1}\sum_{j=0}^\infty\frac{(-1)^{N_2j+m_2+m_1}}{\left(N_1x+m_1
+\frac{N_1}{N_2}(N_2j+m_2)\right)^{s}} \\
&=\left(\frac{N_1}{N_2}\right)^s
\sum_{m_2=0}^{N_2-1}(-1)^{m_2}\sum_{m_1=0}^{N_1-1}\sum_{j=0}^\infty\frac{(-1)^{N_1j+m_1}}{\left(N_1x+
+\frac{N_1}{N_2}m_2 + N_1j+m_1\right)^{s}}  \\
&=\left(\frac{N_1}{N_2}\right)^s
\sum_{m_2=0}^{N_2-1}(-1)^{m_2}\sum_{k=0}^\infty\frac{(-1)^k}{ \left( N_1x+\frac{N_1}{N_2}m_2+k\right)^s} \\
&=\left(\frac{N_1}{N_2}\right)^s
\sum_{m_2=0}^{N_2-1}(-1)^{m_2}\zeta_E\left(s, N_1x+\frac{N_1}{N_2}m_2\right).
\end{align*}
Now differentiating both sides at $s=0,$ get
\begin{align*}
\sum_{m_1=0}^{N_1-1}(-1)^{m_1}\zeta_E'\left(0,N_2x+\frac{N_2}{N_1}m_1\right) 
&=\log\left(\frac{N_1}{N_2}\right) \sum_{m_2=0}^{N_2-1}(-1)^{m_2}\zeta_E\left(0, N_1x+\frac{N_1}{N_2}m_2\right) \\
&\quad+\sum_{m_2=0}^{N_2-1}(-1)^{m_2}\zeta_E'\left(0, N_1x+\frac{N_1}{N_2}m_2\right).
\end{align*}
By (\ref{def-ga-e-zeta}), since $\zeta_E\left(0, N_1x+{N_1m_2}/{N_2}\right)={1}/{2},$ we have
$$
\prod_{m_1=0}^{N_1-1}\bar\Gamma_1\left(N_2x+\frac{N_2}{N_1}m_1\right)^{(-1)^{m_1}}
=\left(\frac{N_1}{N_2}\right)^{\frac12}\prod_{m_2=0}^{N_2-1}\bar\Gamma_1\left(N_1x+\frac{N_1}{N_2}m_2\right)^{(-1)^{m_2}}.
$$
This completes the proof. 
\hfill$\square$


\subsection{Proofs for the double sine-type function}\label{sec:proofs:double-sine}
In this section, we prove the results concerning the alternating analogue of Shintani's double sine function: the evaluation at $x=1$ (see Proposition~\ref{exam-5}), the algebraicity of this value for algebraic $\tau$ (see Theorem~\ref{thm-alg1}), the explicit product representation involving tangent functions (see Proposition~\ref{thm-c-1}), the algebraic-transcendence dichotomy (see Theorem~\ref{thm-alg2}), the parallel representation for $\mathcal C_2(m\tau,(1,\tau))$ (see Proposition~\ref{m-tau}), and the distribution relation under odd integer scalings (see Theorem~\ref{thm-c-2}).

\subsection*{Proof of Proposition \ref{exam-5}}
From (\ref{ru1-ome-nor}) and (\ref{pro-an-o}), with $N=2,$ we have
\begin{align*}
\mathcal C_2(1,(1,\tau))
&=\frac{\displaystyle\prod_{m_1,m_2\geq1}(m_1+m_2\tau-1)^{(-1)^{m_1+m_2+1}}}
{\displaystyle\prod_{m_1,m_2\geq0}(m_1+m_2\tau+1)^{(-1)^{m_1+m_2+1}}} \\
&=\frac{\displaystyle\prod_{m_1\geq0,m_2\geq1}(m_1+m_2\tau)^{(-1)^{m_1+m_2}}}
{\displaystyle\prod_{m_1\geq1,m_2\geq0}(m_1+m_2\tau)^{(-1)^{m_1+m_2}}} \\
&=\frac{\displaystyle\prod_{m_1,m_2\geq1}(m_1+m_2\tau)^{(-1)^{m_1+m_2}} \prod_{m_2\geq1}(m_2\tau)^{(-1)^{m_2}}}
{\displaystyle\prod_{m_1,m_2\geq1}(m_1+m_2\tau)^{(-1)^{m_1+m_2}}\prod_{m_1\geq1}m_1^{(-1)^{m_1}}} \\
&=\frac{1}{\sqrt\tau},
\end{align*}
which is the desired result. 
\hfill$\square$

\subsection*{Proof of  Theorem \ref{thm-alg1}}
 By Proposition \ref{exam-5},
$$
 \mathcal C_2(1,(1,\tau)) = \frac{1}{\sqrt{\tau}}.
$$
Since $\tau$ is algebraic, $\sqrt{\tau}$ satisfies $x^2 - \tau = 0$, hence is algebraic. Therefore its reciprocal is also algebraic.
\hfill$\square$

\subsection*{Proof of  Proposition \ref{thm-c-1}}
For our purpose, we require the following lemma. 

\begin{lemma}\label{lem-1}
$$
\mathcal C_2(x,(1,\tau))\mathcal C_2(x+1,(1,\tau))=\tan\left(\frac{\pi x}{2\tau}\right).
$$
\end{lemma}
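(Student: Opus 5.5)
The plan is to use the recurrence for $\bar\Gamma_2$ in the first variable, which reduces $\mathcal C_2(x)\mathcal C_2(x+1)$ to a ratio of $\bar\Gamma_1$-type values with parameter $\tau$. By \eqref{ru1-ome-nor} with $N=2$ and $\omega=(1,\tau)$,
\[
\mathcal C_2(x,(1,\tau))=\bar\Gamma_2(x,(1,\tau))^{-1}\,\bar\Gamma_2(1+\tau-x,(1,\tau)).
\]
The recurrence I need is the $(1,\tau)$-version of the relation quoted after \eqref{def-ga-e-zeta}, namely
\[
\bar\Gamma_2(x+1,(1,\tau))=\bar\Gamma_2(x,(1,\tau))^{-1}\,\bar\Gamma_1(x,\tau).
\]
At the zeta level it reads $\zeta_{E,2}(s,x+1,(1,\tau))+\zeta_{E,2}(s,x,(1,\tau))=\zeta_{E,1}(s,x,\tau)$. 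To see this, split off the $m_1=0$ terms of $\zeta_{E,2}(s,x)$ and reindex $m_1\mapsto m_1+1$ in the rest; the sign flip makes the remaining sum equal to $-\zeta_{E,2}(s,x+1)$. This holds for $\operatorname{Re}(s)>0$, and I then differentiate at $s=0$ and use analytic continuation.

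With this relation I multiply the two factors:
\[
\mathcal C_2(x)\,\mathcal C_2(x+1)=\bigl(\bar\Gamma_2(x)\bar\Gamma_2(x+1)\bigr)^{-1}\,\bar\Gamma_2(1+\tau-x)\,\bar\Gamma_2(\tau-x).
\]
The first product is $\bar\Gamma_1(x,\tau)$ by the recurrence at $x$. The second product is $\bar\Gamma_1(\tau-x,\tau)$ by the recurrence at $\tau-x$. Hence
\[
\mathcal C_2(x)\,\mathcal C_2(x+1)=\bar\Gamma_1(\tau-x,\tau)\,\bar\Gamma_1(x,\tau)^{-1}=\mathcal C_1(x,\tau)^{-1}.
\]
By \eqref{nor-1} this equals $\cot(\pi x/2\tau)^{-1}=\tan(\pi x/2\tau)$, as claimed.

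The main obstacle is justifying the recurrence for general $\tau$ and the arguments involved. The paper only quotes the recurrence for $\omega=(1,\dots,1)$ from \cite{arXiv}, so I will supply the splitting argument above. Some care is needed because $\tau-x$ may fall outside the region $\operatorname{Re}>0$; I plan to prove the identity for $x$ in a range where all arguments have positive real part, for example $0<\operatorname{Re}x<\operatorname{Re}\tau$, and extend by analytic continuation in $x$. The sign bookkeeping in the exponents $(-1)^{N-1}$ and $(-1)^N$ must also be checked so that the quotient comes out as $\mathcal C_1^{-1}$ and not $\mathcal C_1$.
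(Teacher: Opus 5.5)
Your proof is correct and is essentially the paper's argument. The paper peels off the $m_1=0$ row of the regularized product for $\mathcal C_2(x,(1,\tau))$ and shifts $m_1$ in the product for $\mathcal C_2(x+1,(1,\tau))$; this is precisely your recurrence $\bar\Gamma_2(x,(1,\tau))\,\bar\Gamma_2(x+1,(1,\tau))=\bar\Gamma_1(x,\tau)$, used at $x$ and at $\tau-x$. It then finishes with \eqref{tan-cot}, which is \eqref{nor-1} written as products. Your version is cleaner, since it does the splitting on $\zeta_{E,2}$ itself rather than by formal manipulation of regularized products. That splitting is best carried out for $\operatorname{Re}(s)>2$, where the double series converges absolutely, and then continued analytically to $s=0$.
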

\begin{proof}
From (\ref{ru1-ome-nor}), with $N=2,$ we first compute that
\begin{align*}
\mathcal C_2(x,(1,\tau))
&=\frac{\displaystyle\prod_{m_1,m_2\geq1}(m_1+m_2\tau-x)^{(-1)^{m_1+m_2+1}}}
{\displaystyle\prod_{m_1,m_2\geq0}(m_1+m_2\tau+x)^{(-1)^{m_1+m_2+1}}} \\
&=\frac{\displaystyle\prod_{m_1,m_2\geq1}(m_1+m_2\tau-x)^{(-1)^{m_1+m_2+1}} }
{\displaystyle\prod_{m_1\geq1,m_2\geq0}(m_1+m_2\tau+x)^{(-1)^{m_1+m_2+1}}\prod_{m_2\geq0}(m_2\tau+x)^{(-1)^{m_2+1}}}.
\end{align*}
Secondly we do similarly 
\begin{align*}
\mathcal C_2(x+1,(1,\tau))
&=\frac{\displaystyle\prod_{m_1,m_2\geq1}(m_1+m_2\tau-x-1)^{(-1)^{m_1+m_2+1}}}
{\displaystyle\prod_{m_1,m_2\geq0}(m_1+m_2\tau+x+1)^{(-1)^{m_1+m_2+1}}} \\
&=\frac{\displaystyle\prod_{m_1\geq0,m_2\geq1}(m_1+m_2\tau-x)^{(-1)^{m_1+m_2}} }
{\displaystyle\prod_{m_1\geq1,m_2\geq0}(m_1+m_2\tau+x)^{(-1)^{m_1+m_2}}} \\
&=\frac{\displaystyle\prod_{m_1,m_2\geq1}(m_1+m_2\tau-x)^{(-1)^{m_1+m_2}} \prod_{m_2\geq1}(m_2\tau-x)^{(-1)^{m_2}} }
{\displaystyle\prod_{m_1\geq1,m_2\geq0}(m_1+m_2\tau+x)^{(-1)^{m_1+m_2}}}.
\end{align*}
Therefore, by (\ref{tan-cot}) and the above equations, we obtain
$$
\begin{aligned}
\mathcal C_2(x,(1,\tau))\mathcal C_2(x+1,(1,\tau))
&=\prod_{m_2\geq0}(m_2\tau+x)^{(-1)^{m_2}}\prod_{m_2\geq0}(m_2\tau-x)^{(-1)^{m_2}} \\
&=\tan\left(\frac{\pi x}{2\tau}\right).
\end{aligned}
$$
This completes the proof. 
\end{proof}

\noindent
The proof of Proposition \ref{thm-c-1} now follows immediately by successively applying Lemma \ref{lem-1} with $x=1,2,\ldots,m-1$, 
alternately multiplying and dividing the resulting identities according to each case.
\hfill$\square$

\subsection*{Proof of Theorem \ref{thm-alg2}}
(1) If $\tau \in \mathbb{Q}$, then each value
\[
\tan\left(\frac{k\pi}{2\tau}\right)
\]
is either a finite algebraic number (when defined) or the point at infinity (when the cosine vanishes). Under the projective convention stated in the introduction, both are algebraic points. Since $\sqrt{\tau}^{(-1)^m}$ is also algebraic, the finite product in Proposition~\ref{thm-c-1} is an algebraic point, so $\mathcal C_2(m,(1,\tau))$ is algebraic in the projective sense.

(2) Suppose that $\tau\in\overline{\mathbb{Q}}\setminus\mathbb{Q}.$
In particular, $\tau\ne0$. Put $q=e^{i\pi/(2\tau)}$. Since $e^{i\pi}=-1$, we may write
\[
q=(-1)^{\frac{1}{2\tau}},
\]
where the determination is taken with $\log(-1)=i\pi$. The choice of branch does not affect the conclusion: the Gelfond--Schneider theorem states that every value of $(-1)^{1/(2\tau)}$ is transcendental whenever $1/(2\tau)$ is an algebraic irrational number. Hence
\[
q=e^{\frac{i\pi}{2\tau}}
\]
is transcendental.

We shall show that
\[
P(q)=\prod_{k=1}^{m-1}\left(\tan\left(\frac{k\pi}{2\tau}\right)\right)^{(-1)^{m+k-1}}
\]
is transcendental. Since
\[
\tan\left(\frac{k\pi}{2\tau}\right)=\frac{q^k-q^{-k}}{i(q^k+q^{-k})}
=\frac{q^{2k}-1}{i(q^{2k}+1)},
\]
we define
\[
P(X)=\prod_{k=1}^{m-1}\left(\frac{X^{2k}-1}{i(X^{2k}+1)}\right)^{(-1)^{m+k-1}}
\in\overline{\mathbb{Q}}(X).
\]
Then
\[
P(q)=\prod_{k=1}^{m-1}\left(\tan\left(\frac{k\pi}{2\tau}\right)\right)^{(-1)^{m+k-1}}.
\]
We first show that $P(X)$ is a nonconstant rational function.
Put $r=m-1$ and let $\zeta$ be a primitive $4r$-th root of unity. Then
$\zeta^{2r}=-1$. For $1\leq k<r$, we have
\[
\zeta^{2k}\ne1 \quad\text{and}\quad \zeta^{2k}\ne-1.
\]
Indeed, if $\zeta^{2k}=1$, then $2r$ divides $2k$, and hence
$r$ divides $k$, which is impossible for $1\leq k<r$.
Likewise, if $\zeta^{2k}=-1$, then $\zeta^{2k}=\zeta^{2r}$, so that $\zeta^{2(k-r)}=1$.
It follows again that $r$ divides $k$, which is impossible.
Consequently, at $X=\zeta$, all the factors
\[
\frac{X^{2k}-1}{X^{2k}+1} \quad (1\leq k<r)
\]
are finite and nonzero, whereas the factor corresponding to $k=r$ has
a simple pole, because
\[
\zeta^{2r}+1=0 \quad\text{and}\quad \zeta^{2r}-1=-2\ne0.
\]
Moreover,
\[
(-1)^{m+r-1}=(-1)^{m+(m-1)-1}=(-1)^{2m-2}=1.
\]
Hence $P(X)$ has a pole at $X=\zeta$. Since this pole is simple and none of the other
factors vanish or have a pole at $X=\zeta$, the pole is not cancelled in the reduced
representation of $P(X)$. In particular, $P(X)$ is nonconstant.

Now suppose, to the contrary, that $P(q)$ is algebraic. Since
$P(X)\in\overline{\mathbb{Q}}(X)$ is nonconstant, we may write
\[
P(X)=\frac{A(X)}{B(X)},
\]
where $A(X),B(X)\in\overline{\mathbb{Q}}[X]$ are relatively prime. If $P(q)=\alpha\in\overline{\mathbb{Q}}$, then $A(q)-\alpha B(q)=0$.
Since $P(X)$ is nonconstant, the polynomial
\[
A(X)-\alpha B(X)
\]
is not identically zero. Thus $q$ satisfies a nonzero polynomial
equation over $\overline{\mathbb{Q}}$. Therefore
\[
q\in\overline{\mathbb{Q}},
\]
which contradicts the transcendence of $q$.
It follows that
\[
\prod_{k=1}^{m-1}\left(\tan\left(\frac{k\pi}{2\tau}\right)\right)^{(-1)^{m+k-1}}
\]
is transcendental.

Finally, by Proposition~\ref{thm-c-1},
\[
\mathcal C_2(m,(1,\tau))=\sqrt{\tau}^{\,(-1)^m}\prod_{k=1}^{m-1}
\left(\tan\left(\frac{k\pi}{2\tau}\right)\right)^{(-1)^{m+k-1}}.
\]
We note that $P(q)$ is well defined. Indeed, if for some $1\le k\le m-1$ the value
$\tan(k\pi/(2\tau))$ were undefined or zero, then $k\pi/(2\tau)$ would be an odd or even
multiple of $\pi/2$, forcing $\tau\in\mathbb{Q}$, contrary to the assumption. Hence all
tangent factors are finite and nonzero. Since $\tau$ is algebraic, $\sqrt{\tau}^{\,(-1)^m}$
is a nonzero algebraic number. Hence the product of $\sqrt{\tau}^{\,(-1)^m}$ and the above
transcendental number is transcendental. Therefore
\[
\mathcal C_2(m,(1,\tau))
\]
is transcendental.
\hfill$\square$

\subsection*{Proof of Proposition \ref{m-tau}}
We prove Proposition \ref{m-tau} by an argument analogous to that used in the proof of Proposition \ref{thm-c-1}. 
From (\ref{ru1-ome-nor}) and (\ref{pro-an-o}), we have
\begin{align*}
\mathcal C_2(x,(\omega_1,\omega_2))=\frac{\displaystyle\prod_{m_1,m_2\geq1}(m_1\omega_1+m_2\omega_2-x)^{(-1)^{m_1+m_2+1}}}
{\displaystyle\prod_{m_1,m_2\geq0}(m_1\omega_1+m_2\omega_2+x)^{(-1)^{m_1+m_2+1}}}.
\end{align*}
We also recall the regularized alternating product formula
\[
\prod_{n=1}^{\infty}(n\tau)^{(-1)^n}=\sqrt{\frac{\pi}{2\tau}}
\]
(see (\ref{pro-an-o})).
We first evaluate the base value at $x=\tau$. Setting $(\omega_1,\omega_2)=(1,\tau)$, we obtain
\begin{align*}
\mathcal C_2(\tau,(1,\tau))
&=\frac{\displaystyle\prod_{m_1,m_2\geq1}(m_1+m_2\tau)^{(-1)^{m_1+m_2}}\prod_{m_1\geq1}m_1^{(-1)^{m_1}}}
{\displaystyle\prod_{m_1,m_2\geq1}(m_1+m_2\tau)^{(-1)^{m_1+m_2}}\prod_{m_2\geq1}(\tau m_2)^{(-1)^{m_2}}} \\
&=\frac{\displaystyle\prod_{m_1\geq1}m_1^{(-1)^{m_1}}}
{\displaystyle\prod_{m_2\geq1}(\tau m_2)^{(-1)^{m_2}}} \\
&=\sqrt{\tau}.
\end{align*}
Next, we determine the functional relation between
$\mathcal C_2(x,(1,\tau))$
and
$\mathcal C_2(x+\tau,(1,\tau))$.
By definition,
\begin{align*}
\mathcal C_2(x,(1,\tau))&=\frac{\displaystyle\prod_{m_1,m_2\geq1}(m_1+m_2\tau-x)^{(-1)^{m_1+m_2+1}}}
{\displaystyle\prod_{m_1,m_2\geq0}(m_1+m_2\tau+x)^{(-1)^{m_1+m_2+1}}},
\\
\mathcal C_2(x+\tau,(1,\tau))&=\frac{\displaystyle\prod_{m_1\geq1,m_2\geq0}(m_1+m_2\tau-x)^{(-1)^{m_1+m_2}}}
{\displaystyle\prod_{m_1\geq0,m_2\geq1}(m_1+m_2\tau+x)^{(-1)^{m_1+m_2}}}.
\end{align*}
Multiplying these identities, all common factors cancel pairwise, yielding
\begin{align*}
\mathcal C_2(x,(1,\tau))\mathcal C_2(x+\tau,(1,\tau))=\prod_{m_1\geq0}(m_1+x)^{(-1)^{m_1}}\prod_{m_1\geq1}(m_1-x)^{(-1)^{m_1}}.
\end{align*}
Hence, by (\ref{tan-cot}),
\[
\mathcal C_2(x,(1,\tau))\mathcal C_2(x+\tau,(1,\tau))=\tan\left(\frac{\pi x}{2}\right).
\]
Applying this relation successively with
$x=\tau,2\tau,\ldots,(m-1)\tau$, and alternately multiplying and dividing the resulting identities according to parity, together with the initial value
\[
\mathcal C_2(\tau,(1,\tau))=\sqrt{\tau},
\]
we obtain, for $m\geq2$,
\[
\mathcal C_2(m\tau,(1,\tau))
=\sqrt{\tau}^{\,(-1)^{m-1}}\prod_{k=1}^{m-1}\left(\tan\left(\frac{k\pi\tau}{2}\right)\right)^{(-1)^{m+k-1}}.
\]
This completes the proof of Proposition \ref{m-tau}.
\hfill$\square$

\subsection*{Proof of  Theorem \ref{thm-c-2}}
Suppose that $N_1$ and $N_2$  are odd integers.  Then we first compute
\begin{align*}
\zeta_E(s,x,(N_1,N_2))
&=\sum_{m_1,m_2=0}^\infty\frac{(-1)^{m_1+m_2}}{(x+m_1N_1+m_2N_2)^s} \\
&=\sum_{k_1=0}^{N_2-1}\sum_{k_2=0}^{N_1-1}
\sum_{n_1,n_2=0}^\infty\frac{(-1)^{k_1+k_2+n_1+n_2}}{(x+(N_2n_1+k_1)N_1+(N_1n_2+k_2)N_2)^s}.
\end{align*}
We further compute that
\begin{align*}
(N_1N_2)^s\zeta_E(s,x,(N_1,N_2))
&=\sum_{k_1=0}^{N_2-1}\sum_{k_2=0}^{N_1-1}(-1)^{k_1+k_2}\zeta_{E,2}\left(s,\frac{x+N_1k_1+N_2k_2}{N_1N_2}\right).
\end{align*}
Now, differentiating the right-hand side yields
\begin{align*}
&\exp\left(\left. -\frac{\partial}{\partial s}
\left(
(N_1N_2)^s\zeta_{E,2}(s,x,(N_1,N_2))
\right)\right|_{s=0}\right) \\
&=\exp\left(\left. \frac{\partial}{\partial s}
\left(
\sum_{k_1=0}^{N_2-1}\sum_{k_2=0}^{N_1-1}(-1)^{k_1+k_2+1}\zeta_{E,2}\left(s,\frac{x+N_1k_1+N_2k_2}{N_1N_2}\right)
\right)\right|_{s=0}\right) \\
&=\prod_{k_1=0}^{N_2-1}\prod_{k_2=0}^{N_1-1} \exp\left(\left. (-1)^{k_1+k_2+1}\frac{\partial}{\partial s}
\left(\zeta_{E,2}\left(s,\frac{x+N_1k_1+N_2k_2}{N_1N_2}\right)\right)\right|_{s=0}\right) \\
&=\prod_{k_1=0}^{N_2-1}\prod_{k_2=0}^{N_1-1} 
\bar\Gamma_2\left(\frac{x+N_1k_1+N_2k_2}{N_1N_2}\right)^{(-1)^{k_1+k_2+1}}.
\end{align*}
On the other hand, the double alternating Hurwitz zeta function is computed as follows:
\begin{align*}
(N_1N_2)^s&\zeta_E(s,N_1+N_2-x,(N_1,N_2)) \\
&=\sum_{k_1=0}^{N_2-1}\sum_{k_2=0}^{N_1-1}(-1)^{k_1+k_2}\zeta_{E,2}\left(s,2-\frac{x+N_1k_1+N_2k_2}{N_1N_2}\right).
\end{align*}
Similarly, we obtain
\begin{align*}
&\exp\left(\left. \frac{\partial}{\partial s}
\left(
(N_1N_2)^s\zeta_{E,2}(s,N_1+N_2-x,(N_1,N_2))
\right)\right|_{s=0}\right) \\
&=\exp\left(\left. \frac{\partial}{\partial s}
\left(
\sum_{k_1=0}^{N_2-1}\sum_{k_2=0}^{N_1-1}(-1)^{k_1+k_2}\zeta_{E,2}\left(s,2-\frac{x+N_1k_1+N_2k_2}{N_1N_2}\right)
\right)\right|_{s=0}\right) \\
&=\prod_{k_1=0}^{N_2-1}\prod_{k_2=0}^{N_1-1} 
\bar\Gamma_2\left(2-\frac{x+N_1k_1+N_2k_2}{N_1N_2}\right)^{(-1)^{k_1+k_2}}.
\end{align*}
Therefore, from the above results and definition we have
\begin{align*}
\prod_{k_1=0}^{N_2-1}\prod_{k_2=0}^{N_1-1}&
\mathcal C_2\left(\frac{x+N_1k_1+N_2k_2}{N_1N_2}\right)^{(-1)^{k_1+k_2}} \\
&=\prod_{k_1=0}^{N_2-1}\prod_{k_2=0}^{N_1-1}
\left( \frac{\bar\Gamma_2\left(2-\frac{x+N_1k_1+N_2k_2}{N_1N_2}\right)}{\bar\Gamma_2\left(\frac{x+N_1k_1+N_2k_2}{N_1N_2}\right)} \right)^{(-1)^{k_1+k_2}} \\
&=\exp\left(\left. -\frac{\partial}{\partial s}
\left(
(N_1N_2)^s\zeta_{E,2}(s,x,(N_1,N_2))
\right)\right|_{s=0}\right.  \\
&\quad +
\left.\left. \frac{\partial}{\partial s}
\left(
(N_1N_2)^s\zeta_{E,2}(s,N_1+N_2-x,(N_1,N_2))
\right)\right|_{s=0}\right) \\
&=\exp\left(
-\left(\frac14\log(N_1N_2)+\zeta'_{E,2}(0,x,(N_1,N_2))\right)\right. \\
&\left.\quad+\left(\frac14\log(N_1N_2)+\zeta'_{E,2}(0,N_1+N_2-x,(N_1,N_2))\right)  \right) \\
&=\bar\Gamma_2(x,(N_1,N_2))^{-1}\bar\Gamma_2(N_1+N_2-x,(N_1,N_2)) \\
&=\mathcal C_2(x,(N_1,N_2)),
\end{align*}
where we have used the evaluation $\zeta_{E,2}(0,x,(N_1,N_2))=1/4.$
This completes the proof. 
\hfill$\square$

\subsection{Proof of the alternating Mizuno formula}\label{sec:proofs:mizuno}
We now proceed to prove the alternating Mizuno formula (see Theorem~\ref{main}). Our argument adapts the analytical method of Mizuno \cite[Theorem 1]{Mizuno}, with the Weierstrass--Hadamard product of the modified gamma function $\tilde\Gamma(x)$ (see \eqref{WH}) serving as the central ingredient.
\subsection*{Proof of Theorem \ref{main}}
Let $c$ be a nonnegative integer.  For Re$(s)>0,$ define
\begin{equation}
           \Lambda_{c}^{*}(s)=\sum_{m=c+1}^{\infty} (-1)^{m+1} \prod_{j=1}^{n}(m+x_{j})^{-s}.
\end{equation}
Then, by factoring out \(m^{-ns}\), we obtain
\begin{equation}\label{eq:decomposition}
           \Lambda_{c}^{*}(s)
           =\sum_{m=c+1}^{\infty} (-1)^{m+1} m^{-ns}
           \prod_{j=1}^{n}\left(1+\frac{x_{j}}{m}\right)^{-s}.
\end{equation}
Consequently,
\begin{equation}\label{eq:main_identity}
           \begin{aligned}
                     &\Lambda_{c}^{*}(s)-\sum_{m=c+1}^{\infty} (-1)^{m+1}m^{-ns}
                     +s\Bigl(\sum_{j=1}^{n}x_{j}\Bigr)\sum_{m=c+1}^{\infty}(-1)^{m+1}m^{-(ns+1)}\\
                     &\quad=\sum_{m=c+1}^{\infty} (-1)^{m+1}m^{-ns}
                     \left\{\prod_{j=1}^{n}\left(1+\frac{x_{j}}{m}\right)^{-s}
                     -1+s\Bigl(\sum_{j=1}^{n}x_{j}\Bigr)\frac{1}{m}\right\}.
           \end{aligned}
\end{equation}
Now fix a sufficiently large positive integer $c$ such that \(\left|{x_j}/{m}\right|<1\) for all \(m\ge c+1\) and \(1\le j\le n\). 
By the binomial expansion, valid uniformly for \(|z|<1\),
\begin{equation}
           (1+z)^{-s}=1-sz+O(|z|^{2}) \quad (z\to 0),
\end{equation}
where the implied constant depends locally uniformly on \(s\). Applying this with \(z={x_j}/{m}\), we obtain
\begin{equation}
           \left(1+\frac{x_{j}}{m}\right)^{-s}
           =1-\frac{s x_{j}}{m}+O\!\left(\frac{1}{m^{2}}\right),
\end{equation}
uniformly for \(s\) in compact subsets of \(\mathbb{C}\). It follows that
\begin{equation}
           \prod_{j=1}^{n}\left(1+\frac{x_{j}}{m}\right)^{-s}
           =1-\frac{s}{m}\sum_{j=1}^{n}x_{j}
           +O\!\left(\frac{1}{m^{2}}\right).
\end{equation}
Hence the expression in braces in \eqref{eq:main_identity} is \(O(m^{-2})\) uniformly on compact subsets of \(\mathbb{C}\). Therefore, the series on the right-hand side of \eqref{eq:main_identity} is dominated by
$$
\sum_{m=c+1}^{\infty} m^{-n{\rm Re}(s)-2},
$$
which converges whenever Re$(s)>-{1}/{n}.$ In particular, the series in \eqref{eq:main_identity} converges absolutely and uniformly on compact subsets of the half-plane
$$
\left\{s\in\mathbb{C}:{\rm Re}(s)>-\frac{1}{n}\right\}.
$$

Denote by
\begin{equation}
\Lambda^{*}(s)=\sum_{m=0}^{\infty} (-1)^{m+1} \prod_{j=1}^{n}(m+x_{j})^{-s}
\end{equation}
for Re$(s)>0.$
We have
\begin{align*}
\Lambda^{*}(s)&=\sum_{m=0}^{c}(-1)^{m+1} \prod_{j=1}^{n}(m+x_{j})^{-s}+\Lambda_{c}^{*}(s)\\
&=\sum_{m=0}^{c}(-1)^{m+1} \prod_{j=1}^{n}(m+x_{j})^{-s}\\
&\quad+\sum_{m=c+1}^{\infty} (-1)^{m+1}m^{-ns}-s\left(\sum_{j=1}^{n}x_{j}\right)\sum_{m=c+1}^{\infty}(-1)^{m+1}m^{-(ns+1)}\\
&\quad+\sum_{m=c+1}^{\infty} (-1)^{m+1}m^{-ns}\left(\prod_{j=1}^{n}\left(1+\frac{x_{j}}{m}\right)^{-s}-1+s\left(\sum_{j=1}^{n}x_{j}\right)\frac{1}{m}\right) \\
&=\sum_{m=0}^{c}(-1)^{m+1} \prod_{j=1}^{n}(m+x_{j})^{-s}
+\left(\zeta_E(ns)-\sum_{m=1}^{c} (-1)^{m+1}m^{-ns}\right)\\
&\quad -s\left(\sum_{j=1}^{n}x_{j}\right)\left(\zeta_E(ns+1)-\sum_{m=1}^{c}(-1)^{m+1}m^{-(ns+1)}\right)\\
&\quad+\sum_{m=c+1}^{\infty} (-1)^{m+1}\left(\prod_{j=1}^{n}\left(m+x_{j}\right)^{-s}-m^{-ns}+s\left(\sum_{j=1}^{n}x_{j}\right)m^{-(ns+1)}\right).
\end{align*}
Since the series on the right-hand side of \eqref{eq:main_identity} converges uniformly 
on compact subsets of the half-plane $\{\mathrm{Re}(s)>-1/n\}$, termwise differentiation 
of the series is justified. Thus taking the derivatives on the both sides of the above equality, we have
\begin{align*}
\frac{\partial}{\partial s} \Lambda^{*}(s)\bigg|_{s=0}
&=-\sum_{m=0}^{c}(-1)^{m+1} \sum_{j=1}^{n}\log(m+x_{j})
+n\zeta_E^{\prime}(0)-\sum_{m=1}^{c} (-1)^{m+1}(-n)\log m\\
&\quad-\left(\sum_{j=1}^{n}x_{j}\right)\zeta_E(1)+\left(\sum_{j=1}^{n}x_{j}\right)\sum_{m=1}^{c}(-1)^{m+1}\frac{1}{m}\\
&\quad+\sum_{m=c+1}^{\infty} (-1)^{m+1}\left(-\sum_{j=1}^{n}\log(m+x_{j})+n\log m+\left(\sum_{j=1}^{n}x_{j}\right)\frac{1}{m}\right) \\
&=-\sum_{m=0}^{c}(-1)^{m+1} \sum_{j=1}^{n}\log(m+x_{j})+\sum_{j=1}^{n}\sum_{m=1}^{c}(-1)^{m+1} \left(\log m+\frac{x_{j}}{m}\right)\\
&\quad-\sum_{j=1}^{n}\sum_{m=c+1}^{\infty}(-1)^{m+1}\left(\log\left(1+\frac{x_{j}}{m}\right)-\frac{x_{j}}{m}\right) 
+n\zeta_E^{\prime}(0)-\left(\sum_{j=1}^{n}x_{j}\right)\zeta_E(1).
\end{align*}
Since 
\begin{equation}\label{4.4}
\zeta_E^{\prime}(0)=\log\sqrt{\frac{\pi}{2}}
\end{equation}
 and $\zeta_E(1)=\t\gamma_{0}$ (see \cite{22HK} and \cite{26HK}), we have
\begin{equation}\label{s-eq-pf}
\begin{aligned}
\frac{\partial}{\partial s} \Lambda^{*}(s)\bigg|_{s=0}
&=-\sum_{m=0}^{c}(-1)^{m+1} \sum_{j=1}^{n}\log(m+x_{j})\\
&\quad+\sum_{j=1}^{n}\sum_{m=1}^{c}(-1)^{m+1} \left(\log m+\frac{x_{j}}{m}\right)\\
&\quad-\sum_{j=1}^{n}\sum_{m=c+1}^{\infty}(-1)^{m+1}\left(\log\left(1+\frac{x_{j}}{m}\right)-\frac{x_{j}}{m}\right) \\
&\quad+n\log\sqrt{\frac{\pi}{2}}-\left(\sum_{j=1}^{n}x_{j}\right)\t\gamma_{0}.
\end{aligned}
\end{equation}
On the other hand,
since
\begin{equation}
\Lambda^{*}(s)=\sum_{m=0}^{\infty} (-1)^{m+1} \prod_{j=1}^{n}(m+x_{j})^{-s},
\end{equation}
 by taking the derivatives on the both sides directly,
 we have
\begin{equation}\label{(9)}
\begin{aligned}
\frac{\partial}{\partial s} \Lambda^{*}(s)\bigg|_{s=0}
&=-\sum_{m=0}^{\infty}(-1)^{m+1} \sum_{j=1}^{n}\log(m+x_{j})\\
&=\sum_{m=0}^{\infty}\log\prod_{j=1}^{n}(m+x_{j})^{(-1)^{m}}.
\end{aligned}
\end{equation}
Then comparing (\ref{s-eq-pf}) and (\ref{(9)}), we obtain
\begin{align*}
\prod_{m=0}^{\infty}\left(\prod_{j=1}^{n}(m+x_{j})^{(-1)^{m}}\right)
&= \exp\left(\frac{\partial}{\partial s}\Lambda^{*}(s)\bigg|_{s=0}\right)\\
&=\prod_{j=1}^{n}\left(\prod_{m=0}^{c}(m+x_{j})^{(-1)^{m}}\right)\prod_{j=1}^{n}\prod_{m=1}^{c}m^{(-1)^{m+1}}\prod_{j=1}^{n}\prod_{m=1}^{c}e^{(-1)^{m+1}\frac{x_{j}}{m}}\\
&\quad\times\prod_{j=1}^{n}\left(\prod_{m=c+1}^{\infty}\left(1+\frac{x_{j}}{m}\right)^{(-1)^{m}}\right)\prod_{j=1}^{n}\prod_{m=c+1}^{\infty}e^{(-1)^{m+1}\frac{x_{j}}{m}}\\
&\quad\times\left(\sqrt{\frac{\pi}{2}}\right)^n\times   e^{\left(\sum_{j=1}^{n}x_{j}\right)(-\t\gamma_{0})}\\
&=\prod_{j=1}^{n} x_{j} \times \prod_{j=1}^{n}\left(\prod_{m=1}^{c}(m+x_{j})^{(-1)^{m}}\right) \\
&\quad\times\prod_{j=1}^{n}\prod_{m=1}^{c}m^{(-1)^{m+1}}\prod_{j=1}^{n}\prod_{m=1}^{c}e^{(-1)^{m+1}\frac{x_{j}}{m}}\\
&\quad\times\prod_{j=1}^{n}\left(\prod_{m=c+1}^{\infty}\left(1+\frac{x_{j}}{m}\right)^{(-1)^{m}}\right)\prod_{j=1}^{n}\prod_{m=c+1}^{\infty}e^{(-1)^{m+1}\frac{x_{j}}{m}}\\
&\quad\times\left(\sqrt{\frac{\pi}{2}}\right)^n\times   e^{\left(\sum_{j=1}^{n}x_{j}\right)(-\t\gamma_{0})}\\
&=\left(\sqrt{\frac{\pi}{2}}\right)^n
\prod_{j=1}^{n}\left( x_{j}e^{-\t\gamma_{0}x_{j}}\prod_{m=1}^\infty\left(e^{-\frac{x_{j}}{m}}\left(1+\frac{x_{j}}{m}\right)\right)^{(-1)^{m}}\right).
\end{align*}
Collecting all terms and using the Weierstrass product representation \eqref{WH} of the modified gamma function, we conclude that
\[
\prod_{m=0}^{\infty}\left(\prod_{j=1}^{n}(m+x_{j})^{(-1)^{m}}\right)
=\frac{\left(\sqrt{\frac{\pi}{2}}\right)^n}{\prod_{j=1}^{n}\tilde\Gamma(x_{j})},
\]
which is precisely the desired alternating Mizuno formula. This completes the proof.
\hfill$\square$

\section*{Concluding remarks}
\addcontentsline{toc}{section}{Concluding remarks}

In this paper we have developed a complete alternating analogue of Mizuno's classical product formula, 
replacing the Euler gamma function $\Gamma$ and the constant $\sqrt{2\pi}$ by their natural alternating 
counterparts $\tilde\Gamma$ and $\sqrt{\pi/2}$. 
The main theorem (see Theorem~\ref{main}) unifies and extends several classical identities, 
including Lerch's formula, Wallis' product, and the Kurokawa--Wakayama cyclotomic product formulas.

Beyond this central result, we have established a systematic theory of multiple alternating gamma 
functions $\bar\Gamma_N$, obtaining:
\begin{enumerate}
\item an explicit factorization in terms of Barnes' multiple gamma functions (see Theorem~\ref{gam2-prod}),
\item a Stirling-type asymptotic expansion for finite alternating products (see Theorem~\ref{St-form}),
\item a Gauss--Legendre type multiplication formula (see Theorem~\ref{thm-g-Ga-Le}).
\end{enumerate}
We have also introduced an alternating analogue $\mathcal C_2$ of Shintani's double sine function 
and proved a clean arithmetic dichotomy: its special values at integers are algebraic for 
rational $\tau$ and transcendental for irrational algebraic $\tau$ (see Theorem~\ref{thm-alg2}).

These results suggest several directions for future investigation, including 
$p$-adic analogues of the alternating Mizuno formula, the study of higher-order 
alternating Shintani functions $\mathcal C_N$, and possible applications to 
Stark-type conjectures over totally real fields.

\end{document}